\documentclass[lang=en]{elegantpaper}

\title{Optimal convergence analysis of arbitrary Lagrangian-Eulerian finite element
methods for two-phase Stokes flow problems with moving interface}
\author{Yi Liang $^a$, Cheng Wang $^b$, Pengtao Sun $^{c,*}$, Yan Chen $^{d}$, Jiarui Han $^{d}$}
\institute{$^a$ School of Mathematical Sciences, Tongji University, 1239 Siping Road, Shanghai 200092, China\\
$^b$ School of Mathematical Sciences, Key Laboratory of Intelligent Computing and Applications (Ministry of Education), Tongji University, 1239 Siping Road, Shanghai 200092, China\\
$^c$ Department of Mathematical Sciences, University of Nevada Las
Vegas, 4505 Maryland Parkway, Las Vegas, NV 89154, USA\\
$^d$ Shenzhen Raymind Biotechnology Co., Ltd., Shenzhen, Guangdong,
518129, China}
\date{}

\usepackage{lineno}

\allowdisplaybreaks[4]
\newcommand{\bs}[1]{\boldsymbol{#1}}

\begin{document}

\maketitle
\linenumbers

\let\thefootnote\relax
\makeatletter\def\Hy@Warning#1{}\makeatother
\footnotetext{$^{*}$Corresponding author} \footnotetext{Email
addresses:
\href{mailto:2310285@tongji.edu.cn}{2310285@tongji.edu.cn} (Yi
Liang),
\href{mailto:wangcheng@tongji.edu.cn}{wangcheng@tongji.edu.cn}
(Cheng Wang),
\href{mailto:pengtao.sun@unlv.edu}{pengtao.sun@unlv.edu} (Pengtao
Sun), \href{mailto:yanc@raymind.com}{yanc@raymind.com} (Yan Chen),
\href{mailto:han@raymind.com}{han@raymind.com} (Jiarui Han)}

\begin{abstract}
In this paper, an arbitrary Lagrangian-Eulerian (ALE)-based finite
element method (FEM) is developed and studied in a monolithic
framework for a class of two-phase Stokes flow problems with moving
interfaces and jump coefficients, where the mixed finite element
approximation to Stokes moving interface problems is established and
analyzed in both semi- and fully discrete schemes based on the ALE
formulation. The key analytical technique involves a specific
$H^1$-projection associated with the ALE-induced mesh motion due to
the evolving interface. Optimal convergence properties of the
proposed $H^1$-projection and its ALE temporal derivative are proved
in both $H^1$ and $L^2$ norms, with which optimal error estimates
are obtained for both semi- and fully discrete mixed finite element
approximations to the studied Stokes moving interface problem in
both $H^1$ and $L^2$ norms as well. Numerical experiments are
carried out to validate all derived theoretical results. The
developed analytical approach can be extended to Taylor-Hood and
MINI mixed elements, as well as to more general two-phase flow
problems.

\keywords{Arbitrary Lagrangian-Eulerian finite element method
(ALE-FEM); two-phase Stokes flow problems with moving interface;
interface conditions; Taylor-Hood mixed elements; $H^1$-projection;
optimal error estimates in $H^1$- and $L^2$-norm.}
\end{abstract}

\section{Introduction}
Two-phase flow and the associated moving interface problems arise
widely in natural sciences and engineering applications, such as
bubble and droplet dynamics
\cite{mokhtarzadeh1985dynamics,anjos2020ale,chen2023two}, oil-water
mixing flows
\cite{bian2016numerical,pouraria2016numerical,li2018numerical},
contaminated soil remediation
\cite{lu2013mathematical,dong2025modeling}, biofluid transport
\cite{beg2013homotopy,beg2013differential}, and fluid-structure
interactions (FSI)
\cite{wang2018dynamic,gao40research,hu2025theoretical}. In these
numerous fields, the interaction between fluids and evolving
interfaces often determines the overall dynamical behavior of the
system. Due to the fact that the evolution of interfaces is usually
accompanied by discontinuities in physical parameters,
time-dependent domain deformation, and complex coupling mechanisms,
both theoretical analysis and numerical simulation of such problems
remain fundamental and challenging topics in computational fluid
dynamics.

The full mathematical models of practical two-phase flows problems
are typically governed by nonlinear Navier-Stokes equations coupled
with interface conditions and additional physical effects such as
surface tension. These models and their numerical approaches
associated with either body-fitted mesh methods or body-unfitted
mesh methods are often too complicated for rigorous analysis and
efficient computation, especially when the interface motion is taken
into considerations. Therefore, a simplified model plays an
important role for developing and analyzing appropriate numerical
approaches for moving interface problems of two-phase flow in both
theoretical and computational aspects for the first time. In this
paper, we consider a class of moving interface problems of two-phase
Stokes flow with jump coefficients, which can be regarded as a
canonical linearized model for two-phase flows with evolving
interfaces. In this model, the fluid motion is governed by the
time-dependent Stokes equations, while the interface evolution
induces time-dependent domains and discontinuous coefficients across
the interface. Although simplified, this model retains the essential
mathematical difficulties arising from moving interfaces and
provides a solid foundation for the development and analysis of
numerical methods for more general two-phase flow problems.

Among numerical approaches for moving boundary/interface problems,
body-fitted mesh methods are widely regarded as one of the most
accurate and reliable strategies, as they allow the computational
mesh to conform exactly to the evolving interface, thereby enforcing
interface conditions precisely. To effectively construct and update
such moving meshes, the arbitrary Lagrangian-Eulerian (ALE) method
has become one of the most popular frameworks. Originally introduced
by Noh \cite{noh1963cel} and Hirt \cite{hirt1974arbitrary} and later
developed by Hughes \cite{hughes1981lagrangian}, Huerta
\cite{huerta1988viscous}, Belytschko \cite{belytschko1982finite},
and others in the finite element context, the ALE method combines
the advantages of both Lagrangian and Eulerian descriptions while
avoiding their respective shortcomings when dealing with different
kinds of materials on either side of the moving interface. By
introducing an ALE mapping between the current and reference
domains, it provides a flexible framework for handling moving
geometries that depend on the motion of boundary/interface all the
time. Over the past decades, the ALE-based finite
difference/volume/element methods have been successfully applied to
free-surface flows \cite{ramaswamy1987arbitrary,souli2001arbitrary},
multiphase flows
\cite{potghan2021arbitrary,duan2022energy,di20263d}, fluid-structure
interactions
\cite{nitikitpaiboon1993arbitrary,wick2013coupling,hao2021multiscale},
and transport problems on moving domains
\cite{boiarkine2011positivity,mabuza2016modeling}, among other
related applications.

In view of numerical analysis, early studies on the ALE based finite
element method (ALE-FEM) primarily focused on parabolic problems in
moving domains \cite{formaggia1999stability,gastaldi2001priori},
where stability results, optimal error estimates in $H^1$ norm, and
suboptimal estimates in $L^2$ norm were established. Subsequently,
the ALE-FEM was extended to Stokes equations in moving domains
\cite{san2009convergence}, in which a classical $H^1$-projection was
employed. However, due to discretization errors arising from the
approximation of the ALE mapping, analyses based on standard
$H^1$-projection techniques are often insufficient to fully capture
the effects of mesh motion, leading to finite element error
estimates that fail to exhibit higher-order convergence in $L^2$
norm relative to $H^1$ norm. More recently, a novel $H^1$-projection
tailored to moving interface problems has been introduced in
\cite{lan2020monolithic,lan2020finite}, which provides a more
accurate treatment of geometric errors induced by ALE mappings and
has enabled systematic stability analysis and optimal $H^1$-error
estimates for both conservative and non-conservative ALE
formulations for Stokes/parabolic moving interface problems with
jump coefficients. Furthermore, optimal-order $H^1$-error
convergence for ALE-FEMs applied to two-phase Navier-Stokes flows
was established in \cite{li2026optimal}, while optimal $L^2$-error
estimates for semi-discrete ALE-FEMs for the Stokes equations in
evolving domains with moving boundaries were obtained in
\cite{rao2025optimal}. Additional theoretical developments and
analytical results of ALE-FEMs can be found in
\cite{gawlik2015unified,elliott2021unified,li2023optimal,lin2025optimal}.
Nevertheless, despite these advances, a rigorous and systematic
optimal convergence theory for Stokes flow problems with evolving
interface and possibly high-contrast coefficients remains incomplete
within the ALE-FEM framework, particularly in establishing optimal
$L^2$-error estimates that incorporate both mesh-motion
approximation errors and projection-related discretization effects.

Motivated by these considerations, in this paper we develop and
analyze an ALE-based mixed finite element method for a class of
Stokes problems with moving interfaces and jump coefficients. Based
on a suitable ALE mapping, both semi-discrete and fully discrete
formulations are constructed within a monolithic ALE framework. A
key ingredient of the analysis is the introduction of a novel
$H^1$-projection associated with the interface motion, which enables
a precise treatment of the approximation effects induced by the ALE
mesh motion. Optimal approximation properties of this projection are
established in both $H^1$ and $L^2$ norms, together with
corresponding optimal error estimates for its ALE-time derivative,
which serve as an essential analytical tool in the subsequent error
analysis for the developed semi- and fully discrete ALE-FEMs. In
particular, a major difficulty in establishing optimal $L^2$-error
estimates for the velocity lies in deriving optimal-order bounds for
the ALE-time derivative of the error between the exact velocity and
its $H^1$-projection associated with the ALE-induced moving mesh.
Such an analysis is notably challenging since an adjoint problem
corresponding to the sophisticated form satisfied by the ALE-time
derivative is too complicated to be defined, as pointed out in
\cite{lan2020monolithic,lin2025optimal}. Another difficulty arises
from the fact that, when following the integration-by-parts strategy
in \cite{li2023optimal,rao2025optimal} to establish optimal-order
approximation properties for the ALE-time derivative of the
$H^1$-projection, one requires approximation estimates on the
boundaries of each subdomain between the exact solution and its
projection, which are generally unavailable for interface problems.
To overcome these difficulties, instead of introducing an adjoint
problem for the ALE-time derivative of $H^1$-projection, we employ
only the adjoint problem associated with the $H^1$-projection itself
to derive optimal $L^2$-error estimates for the ALE-time derivative
between the exact velocity and its $H^1$-projection. On the other
hand, by additionally establishing optimal $H^{-1}$-type error
estimates for $H^1$-projection of the pressure and for the gradient
of $H^1$-projection of the velocity, we are able to derive
optimal-order approximation properties for the ALE-time derivative
of $H^1$-projection using the Aubin-Nitsche duality argument,
without relying on integration-by-parts techniques, only. Based on
these results, we establish optimal-order error estimates for the
ALE-time derivative of the $H^1$-projection, which yields optimal
error estimates in both $H^1$ and $L^2$ norms for the semi- and
fully discrete ALE finite element schemes. Compared with existing
works, our analysis incorporates the effect of ALE mesh motion and
achieves optimal convergence rates in $L^2$ norm. Finally, numerical
experiments are presented to validate all theoretical results. The
proposed analytical framework can be further extended to more
general two-phase flow and even FSI problems.

The structure of this paper is organized as follows. In Section
\ref{sec:model}, we present the model description of the Stokes
moving interface problem with jump coefficients, introduce the ALE
mapping and some standard definitions, and then present the weak
formulation in the ALE framework. In Section \ref{sec:semi}, we
construct the semi-discrete ALE-FEM and introduce the associated
$H^1$-projection, for which we establish optimal approximation
properties together with those of its ALE-time derivative and derive
optimal error estimates in both $H^1$ and $L^2$ norms for the
semi-discrete scheme. In Section \ref{sec:full}, we develop the
fully discrete ALE-FEM and derive optimal error estimates in both
$H^1$ and $L^2$ norms based on the proposed $H^1$-projection
technique. Section \ref{sec:experiment} is devoted to numerical
experiments that are carried out to validate all theoretical
results. Finally, conclusions and perspectives for future work are
given in Section \ref{sec:conclusion}.

Throughout the paper, we use
$(\phi,\tilde\phi)_\Psi=\int_\Psi\phi\tilde\phi d\bs{x}$ and
$\langle\phi,\tilde\phi\rangle_{\partial\Psi}=\int_{\partial\Psi}\phi\tilde\phi
d\bs{s}$ to denote a $L^2$ inner product inside a region
$\Psi\subset\mathbb{R}^d\ (d=2,3)$ and on a $(d-1)$-dimensional
region $\partial\Psi$, respectively. In addition, we adopt $C$ to
represent a generic positive constant that remains independent of
any discretization parameters, including the mesh size $h$ and the
time step size $\Delta t$.

\section{Model description and ALE weak form}\label{sec:model}

\subsection{Model description}

Let $\Omega$ be an open bounded domain in $\mathbb{R}^d$ ($d=2,3$)
with a convex polygonal boundary $\partial\Omega$, and $T>0$. For
any $t\in[0,T]$, two subdomains
$\Omega_i^t:=\Omega_i(t)\subset\Omega$ ($i=1,2$), satisfying
$\overline{\Omega_1^t}\cup\overline{\Omega_2^t}=\overline{\Omega}$
and $\Omega_1^t\cap\Omega_2^t=\varnothing$, are separated by a
moving interface:
$\Gamma^t:=\Gamma(t)=\partial\Omega_1^t\cap\Omega_2^t$ that may
move/deform along the time $t\in(0,T]$, causing $\Omega_i^t$
($i=1,2$), which are termed as the current (Eulerian) domains with
respect to $\bs{x}_i$, to change with $t\in(0,T]$, in contrast to
their initial (reference/Lagrangian) domains, $\Omega_i^0$ ($i=1,2$)
with respect to $\hat{\bs{x}}_i$. An example of this type of domain
configuration with an immersed subdomain is illustrated in Figure
\ref{fig:domain}.

\begin{figure}[htbp]
    \centering
    \includegraphics[width=0.5\textwidth]{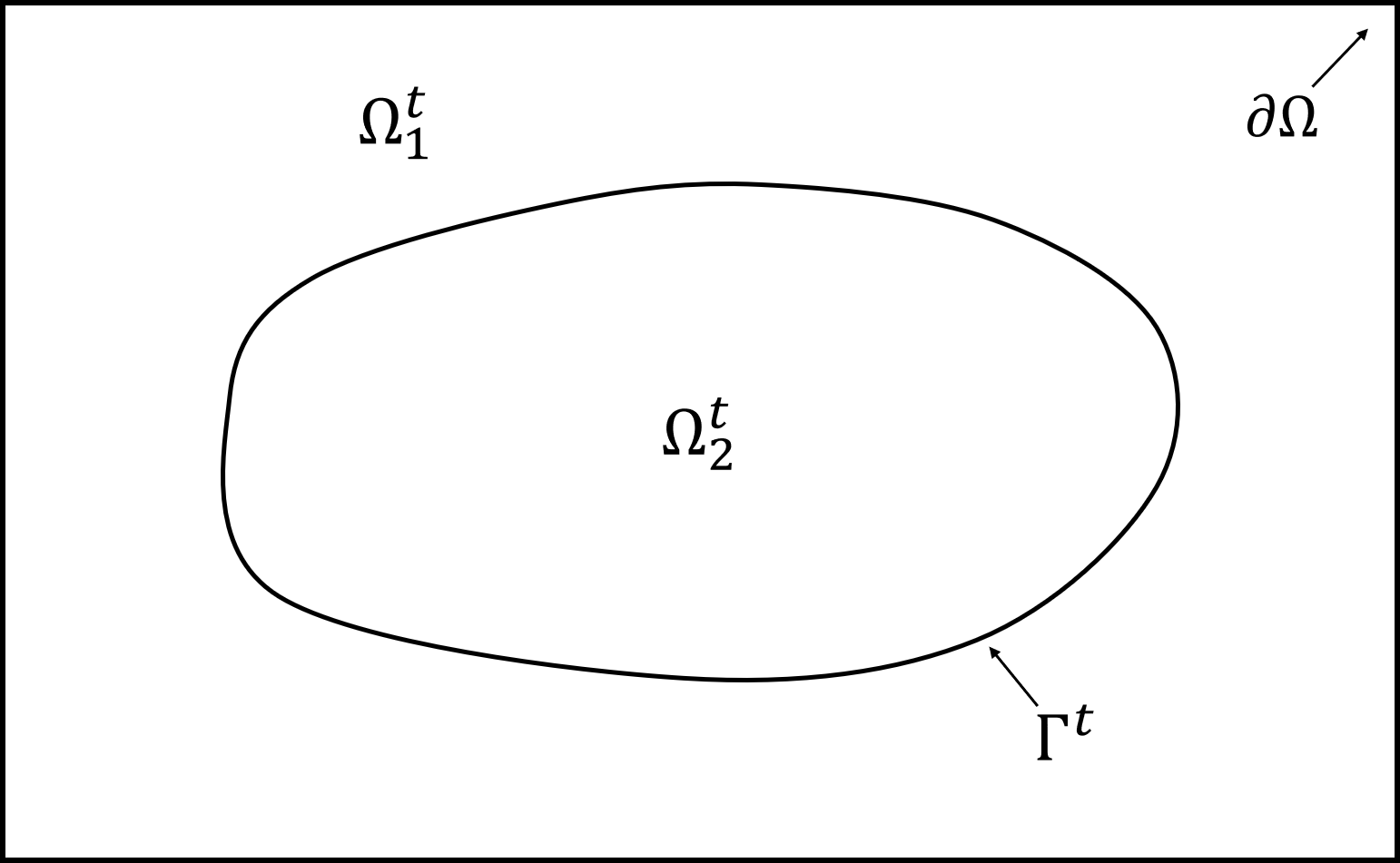}
    \caption{Schematic domain $\Omega$ with the interface $\Gamma^t$ between two subdomains $\Omega_1^t$ and $\Omega_2^t$.}
    \label{fig:domain}
\end{figure}

In the aforementioned domain $\Omega$, we consider the following
moving interface problem of two-phase Stokes flow with jump
coefficients:
\begin{alignat}{2}
    \label{prob1}
    \frac{\partial\bs{u}_1}{\partial t}-\nabla\cdot(\mu_1\nabla\bs{u}_1)+\nabla p_1&=\bs{f}_1,\qquad&&\text{in $\Omega_1^t\times(0,T]$},\\
    \label{prob2}
    \nabla\cdot\bs{u}_1&=0,&&\text{in $\Omega_1^t\times(0,T]$},\\
    \label{prob3}
    \bs{u}_1&=\bs{0},&&\text{on $\partial\Omega_1^t\setminus\Gamma^t\times(0,T]$},\\
    \label{prob4}
    \bs{u}_1(\hat{\bs{x}}_1,0)&=\bs{u}_1^0,&&\text{in $\Omega_1^0$},\\
    \label{prob5}
    \frac{\partial\bs{u}_2}{\partial t}-\nabla\cdot(\mu_2\nabla\bs{u}_2)+\nabla p_2&=\bs{f}_2,&&\text{in $\Omega_2^t\times(0,T]$},\\
    \label{prob6}
    \nabla\cdot\bs{u}_2&=0,&&\text{in $\Omega_2^t\times(0,T]$},\\
    \label{prob7}
    \bs{u}_2&=\bs{0},&&\text{on $\partial\Omega_2^t\setminus\Gamma^t\times(0,T]$},\\
    \label{prob8}
    \bs{u}_2(\hat{\bs{x}}_2,0)&=\bs{u}_2^0,&&\text{in $\Omega_2^0$},\\
    \label{prob9}
    \bs{u}_1&=\bs{u}_2,&&\text{on $\Gamma^t\times[0,T]$},\\
    \label{prob11}
    (-p_1\bs{I}+\mu_1\nabla\bs{u}_1)\bs{n}_1+(-p_2\bs{I}+\mu_2\nabla\bs{u}_2)\bs{n}_2&=\bs{g},&&\text{on
    $\Gamma^t\times[0,T]$},
\end{alignat}
where $\mu_1>0$ and $\mu_2>0$ are two distinct jump coefficients
with $\mu_1\neq\mu_2$. In this paper, both $\mu_1$ and $\mu_2$ are
assumed to be constants. Moreover, $\bs{n}_1$ and $\bs{n}_2$ denote
the outward unit normal vectors on the boundary of $\Omega_i^t$,
respectively. In addition,
$(\bs{f}_1,\bs{f}_2)\in(L^2(\Omega_1^t))^d\times(L^2(\Omega_2^t))^d$
and $\bs{g}\in(H^{1/2}(\Gamma^t))^d$. If $\bs{g}\neq 0$, it denotes
the jump of normal stress of two-phase Stokes flow across the
interface $\Gamma^t$.

\subsection{ALE mapping and ALE weak form}

With the model problem in place, we now introduce a family of
time-dependent and bijective mappings $\bs{X}_i^t\in
H^1(0,T;(W^{2,\infty}(\Omega_i^0))^d)$ ($i=1,2$), defined in each
subdomain at $t=0$ and mapping the initial (Lagrangian) domain
$\Omega_i^0$ ($i=1,2$) to the current (Eulerian) domain $\Omega_i^t$
($i=1,2$), for any $t\in(0,T]$, such that
\begin{align*}
    \bs{X}_i^t:\,\Omega_i^0&\rightarrow\Omega_i^t,\\
    \hat{\bs{x}}_i&\mapsto\bs{x}_i(\hat{\bs{x}}_i,t),
\end{align*}
is invertible, and
$\left(\bs{X}_i^t\right)^{-1}\in(W^{1,\infty}(\Omega_i^t))^d$, where
$\hat{\bs{x}}_i\in\Omega_i^0$ ($i=1,2$) is known as the reference
(Lagrangian) coordinate variable. Then, the domain velocity in the
current domain can be defined as
\begin{align*}
    \bs{w}_i:\,\Omega_i^t\times(0,T]&\rightarrow\mathbb{R}^d,\\
    (\bs{x}_i,t)&\mapsto\bs{w}_i(\bs{x}_i,t)=\frac{\partial\bs{X}_i^t}{\partial t}\circ\left(\bs{X}_i^t\right)^{-1}.
\end{align*}
Further, we can define the ALE-time derivative, which takes the
domain velocity into account, as follows
\begin{align*}
    \left.\frac{\partial\bs{u}_i}{\partial t}\right|_{\hat{\bs{x}}}:\,\Omega_i^t\times(0,T]&\rightarrow\mathbb{R}^d,\\
    (\bs{x}_i,t)&\mapsto\left.\frac{\partial\bs{u}_i}{\partial t}\right|_{\hat{\bs{x}}}(\bs{x}_i,t)=\frac{\partial\bs{u}_i}{\partial t}(\bs{x}_i,t)+(\bs{w}_i(\bs{x}_i,t)\cdot\nabla)\bs{u}_i(\bs{x}_i,t).
\end{align*}

To define the weak form of \eqref{prob1}-\eqref{prob11}, we first
introduce the following Sobolev spaces for $t\in[0,T]$ in the ALE
frame:
\begin{align*}
\bs{U}^t=&\left\{(\bs{v}_1,\bs{v}_2)\in(H^1(\Omega_1^t))^d\times(H^1(\Omega_2^t))^d\,\big|\,\bs{v}_i=\hat{\bs{v}}_i\circ\left(\bs{X}_i^t\right)^{-1},\ \forall\,\hat{\bs{v}}_i\in(H^1(\Omega_i^0))^d,\right.\\
    &\qquad\qquad\qquad\qquad\qquad\qquad\qquad\qquad\ \ \bs{v}_i=\bs{0}\ \text{on $\partial\Omega_i^t\setminus\Gamma^t$},\ i=1,2,\ \bs{v}_1=\bs{v}_2\ \text{on $\Gamma^t$}\bigg\},\\
    Q^t=&\left\{(q_1,q_2)\in L^2(\Omega_1^t)\times L^2(\Omega_2^t)
    \right\}.
\end{align*}
Define the norm $\|(\bs{u}_1,\bs{u}_2)\|_k=\left(\|\bs{u}_1\|_{k,\Omega_1^t}^2+\|\bs{u}_2\|_{k,\Omega_2^t}^2\right)^{1/2}$ for $k\ge0$. It is readily seen that the following inequalities hold
\begin{equation*}
    \frac{1}{\sqrt{2}}\sum_{i=1}^2\|\bs{u}_i\|_{k,\Omega_i^t}\le\|(\bs{u}_1,\bs{u}_2)\|_k\le\sum_{i=1}^2\|\bs{u}_i\|_{k,\Omega_i^t},
\end{equation*}
which will be used interchangeably in the subsequent analysis
whenever convenient. In addition, we also use
$(\bs{u},\bs{v})_\Omega$ and
$\langle\bs{u},\bs{v}\rangle_{\partial\Omega}$ to denote the
$L^2$-inner product over the domain $\Omega$ and the boundary
$\partial\Omega$, respectively. With these notations, the ALE weak
form of \eqref{prob1}-\eqref{prob11} is stated as follows: find
$(\bs{u}_1,\bs{u}_2)\in H^1(0,T;\bs{U}^t)$ and $(p_1,p_2)\in
L^2(0,T;Q^t)$ such that
\begin{equation}\label{weak1}
\left\{
    \begin{array}{l}
        \sum\limits_{i=1}^2\left[\left(\frac{\partial\bs{u}_i}{\partial t}\big|_{\hat{\bs{x}}},
        \bs{v}_i\right)_{\Omega_i^t}+(\mu_i\nabla\bs{u}_i,\nabla\bs{v}_i)_{\Omega_i^t}-((\bs{w}_i\cdot\nabla)\bs{u}_i,\bs{v}_i)_{\Omega_i^t}
        -(p_i,\nabla\cdot\bs{v}_i)_{\Omega_i^t}\right]=\sum\limits_{i=1}^2(\bs{f}_i,\bs{v}_i)_{\Omega_i^t}
        +\langle\bs{g},\bs{v}_1\rangle_{\Gamma^t},\\
    \sum\limits_{i=1}^2(\nabla\cdot\bs{u}_i,q_i)_{\Omega_i^t}=0,
\qquad   \forall\,(\bs{v}_1,\bs{v}_2)\in\bs{U}^t,\, (q_1,q_2)\in
Q^t.
    \end{array}
    \right.
\end{equation}

For the sake of the subsequent error analysis, we recall some useful formulas for time derivatives of integrals over a moving domain. In particular, we first introduce the Reynolds transport theorem \cite{gastaldi2001priori}, which states that for any smooth function $\psi:\,\Omega^t\rightarrow\mathbb{R}$, it holds that
\begin{equation}
    \frac{d}{dt}\int_{\Omega^t}\psi\,d\bs{x}=\int_{\Omega^t}\left[\left.\frac{\partial\psi}{\partial t}\right|_{\hat{\bs{x}}}+\psi(\nabla\cdot\bs{w})\right]\,d\bs{x}.
\end{equation}
Furthermore, noting that for any function $\chi:\,\Omega^0\rightarrow\mathbb{R}$, it holds that $\left.\dfrac{\partial(\chi\circ(\bs{X}^t)^{-1})}{\partial t}\right|_{\hat{\bs{x}}}=0$, with $\bs{X}^t:\Omega^0\rightarrow\Omega^t$ being the flow map, we obtain several commonly used identities, as stated in the lemma below.

\begin{lemma}[\cite{san2009convergence,lan2020monolithic,lin2025optimal}]
\label{lem:semi0} Assume that
$\bs{\varphi}:\,\Omega^t\rightarrow\mathbb{R}^d$,
$\bs{\chi}:\,\Omega^0\rightarrow\mathbb{R}^d$,
$\psi:\,\Omega^t\rightarrow\mathbb{R}$ and
$\bs{\omega}:\,\Omega^t\rightarrow\mathbb{R}^d$ are smooth
functions. Then the following relations hold
\begin{equation}
    \label{eq:semi0-1}
    \begin{aligned}
        \frac{d}{dt}\int_{\Omega^t}\nabla\bs{\varphi}:\nabla(\bs{\chi}\circ(\bs{X}^t)^{-1})\,d\bs{x}=&\int_{\Omega^t}\left[\nabla\left(\left.\frac{\partial\bs{\varphi}}{\partial t}\right|_{\hat{\bs{x}}}\right):\nabla(\bs{\chi}\circ(\bs{X}^t)^{-1})+\nabla\bs{\varphi}:\nabla(\bs{\chi}\circ(\bs{X}^t)^{-1})(\nabla\cdot\bs{w})\right.\\
        &\qquad-\nabla\bs{\varphi}\left(\nabla\bs{w}+\nabla\bs{w}^\mathrm{T}\right):\nabla(\bs{\chi}\circ(\bs{X}^t)^{-1})\bigg]\,d\bs{x},
    \end{aligned}
\end{equation}
\begin{equation}
    \label{eq:semi0-2}
    \begin{aligned}
        \frac{d}{dt}\int_{\Omega^t}\psi\nabla\cdot(\bs{\chi}\circ(\bs{X}^t)^{-1})\,d\bs{x}=&\int_{\Omega^t}\left[\left.\frac{\partial\psi}{\partial t}\right|_{\hat{\bs{x}}}\nabla\cdot(\bs{\chi}\circ(\bs{X}^t)^{-1})+\psi\nabla\cdot(\bs{\chi}\circ(\bs{X}^t)^{-1})(\nabla\cdot\bs{w})\right.\\
        &\qquad-\psi\nabla\bs{w}:\nabla(\bs{\chi}\circ(\bs{X}^t)^{-1})^\mathrm{T}\bigg]\,d\bs{x},
    \end{aligned}
\end{equation}
\begin{equation}
    \label{eq:semi0-3}
    \begin{aligned}
        \frac{d}{dt}\int_{\Omega^t}(\bs{\omega}\cdot\nabla)\bs{\varphi}\cdot(\bs{\chi}\circ(\bs{X}^t)^{-1})\,d\bs{x}=&\int_{\Omega^t}\left[\left(\left.\frac{\partial\bs{\omega}}{\partial t}\right|_{\hat{\bs{x}}}\cdot\nabla\right)\bs{\varphi}\cdot(\bs{\chi}\circ(\bs{X}^t)^{-1})+(\bs{\omega}\cdot\nabla)\left(\left.\frac{\partial\bs{\varphi}}{\partial t}\right|_{\hat{\bs{x}}}\right)\cdot(\bs{\chi}\circ(\bs{X}^t)^{-1})\right.\\
        &\qquad+(\bs{\omega}\cdot\nabla)\bs{\varphi}\cdot(\bs{\chi}\circ(\bs{X}^t)^{-1})(\nabla\cdot\bs{w})\\
        &\qquad-\left(\bs{\omega}\cdot\left(\nabla\bs{w}^\mathrm{T}\nabla\right)\right)\bs{\varphi}\cdot(\bs{\chi}\circ(\bs{X}^t)^{-1})\bigg]\,d\bs{x}.
    \end{aligned}
\end{equation}
\end{lemma}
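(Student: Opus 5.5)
All three identities follow from the same mechanism. We apply the Reynolds transport theorem to the integrand $\Phi$ in each case, which gives $\frac{d}{dt}\int_{\Omega^t}\Phi\,d\bs{x}=\int_{\Omega^t}[\partial_t\Phi|_{\hat{\bs{x}}}+\Phi(\nabla\cdot\bs{w})]\,d\bs{x}$. This produces the $(\nabla\cdot\bs{w})$ terms directly. It then remains to compute the ALE derivative of each integrand by the product rule.

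Write $\bs{v}=\bs{\chi}\circ(\bs{X}^t)^{-1}$. Since $\partial_t\bs{v}|_{\hat{\bs{x}}}=\bs{0}$ by the observation preceding the lemma, the only ALE derivatives that survive are those of $\bs{\varphi}$, $\psi$, $\bs{\omega}$, together with the commutator between the ALE time derivative and spatial differentiation.

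The key step is this commutator identity. For a smooth scalar field $\phi$ we claim
\begin{equation*}
\left.\frac{\partial}{\partial t}\right|_{\hat{\bs{x}}}(\nabla\phi)=\nabla\left(\left.\frac{\partial\phi}{\partial t}\right|_{\hat{\bs{x}}}\right)-(\nabla\bs{w})^{\mathrm T}\nabla\phi ,
\end{equation*}
with $(\nabla\bs{w})_{ij}=\partial_j w_i$. To prove it, we use the definition $\partial_t\phi|_{\hat{\bs{x}}}=\partial_t\phi+\bs{w}\cdot\nabla\phi$ and compute
\begin{equation*}
\partial_k(\partial_t\phi+w_j\partial_j\phi)=\partial_t\partial_k\phi+w_j\partial_j\partial_k\phi+(\partial_k w_j)\partial_j\phi .
\end{equation*}
The first two terms on the right are exactly $(\partial_t|_{\hat{\bs{x}}}\nabla\phi)_k$, which gives the claim. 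Alternatively, one can pull back to $\Omega^0$ with $\nabla=\bs{F}^{-\mathrm T}\hat\nabla$, $\bs{F}=\hat\nabla\bs{X}^t$, and use $\dot{\bs{F}}\bs{F}^{-1}=\nabla\bs{w}$.

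Applying the claim componentwise to $\bs{v}$ gives $\partial_t(\nabla\bs{v})|_{\hat{\bs{x}}}=-\nabla\bs{v}\,\nabla\bs{w}$. Applying it to $\bs{\varphi}$ gives $\partial_t(\nabla\bs{\varphi})|_{\hat{\bs{x}}}=\nabla(\partial_t\bs{\varphi}|_{\hat{\bs{x}}})-\nabla\bs{\varphi}\,\nabla\bs{w}$.

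We now treat the three identities in turn.

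For \eqref{eq:semi0-1}, the product rule on $\nabla\bs{\varphi}:\nabla\bs{v}$ yields $\nabla(\partial_t\bs{\varphi}|_{\hat{\bs{x}}}):\nabla\bs{v}-\nabla\bs{\varphi}\nabla\bs{w}:\nabla\bs{v}-\nabla\bs{\varphi}:\nabla\bs{v}\nabla\bs{w}$. The last term equals $\nabla\bs{\varphi}\,\nabla\bs{w}^{\mathrm T}:\nabla\bs{v}$ by the cyclic trace identity $A:BC=AC^{\mathrm T}:B$. Combining the two $\nabla\bs{w}$ terms gives the symmetric combination $\nabla\bs{\varphi}(\nabla\bs{w}+\nabla\bs{w}^{\mathrm T}):\nabla\bs{v}$.

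For \eqref{eq:semi0-2}, we have $\nabla\cdot\bs{v}=\mathrm{tr}\nabla\bs{v}$, so $\partial_t(\nabla\cdot\bs{v})|_{\hat{\bs{x}}}=-\mathrm{tr}(\nabla\bs{v}\nabla\bs{w})=-\nabla\bs{w}:(\nabla\bs{v})^{\mathrm T}$. This gives the stated form.

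For \eqref{eq:semi0-3}, the product rule is applied to $\omega_k(\partial_k\varphi_j)v_j$. The ALE derivative of $\partial_k\varphi_j$ contributes $(\bs{\omega}\cdot\nabla)(\partial_t\bs{\varphi}|_{\hat{\bs{x}}})$ minus $\omega_k\,\partial_k w_l\,\partial_l\varphi_j$. The latter is the term written as $(\bs{\omega}\cdot(\nabla\bs{w}^{\mathrm T}\nabla))\bs{\varphi}$.

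The main obstacle is bookkeeping rather than analysis. The work lies in fixing the index convention for $\nabla\bs{w}$ and checking that each transposition lands where the stated formulas place it, especially the symmetric combination in \eqref{eq:semi0-1} and the operator notation in \eqref{eq:semi0-3}. Smoothness justifies all differentiation under the integral after pulling back to the fixed domain $\Omega^0$.
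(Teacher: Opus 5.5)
Your proof is correct, and it reaches the paper's formulas by a differently organized computation.

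The paper proves \eqref{eq:semi0-1} in one stroke. It pulls the integral back to $\Omega^0$, writing $\nabla\bs{\varphi}=\nabla_{\hat{\bs{x}}}(\bs{\varphi}\circ\bs{X}^t)(\bs{F}^t)^{-1}$ and $d\bs{x}=J\,d\hat{\bs{x}}$. It then differentiates under the fixed-domain integral using $\partial_tJ=J(\nabla\cdot\bs{w})$ and $\partial_t(\bs{F}^t)^{-1}=-(\bs{F}^t)^{-1}\nabla_{\hat{\bs{x}}}\bs{w}(\bs{F}^t)^{-1}$, and pushes forward. The other two identities are only asserted to be ``similar''.

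You instead split the work into two pieces. The Reynolds transport theorem absorbs the Jacobian and produces all the $(\nabla\cdot\bs{w})$ terms. The commutator $\partial_t(\nabla\phi)|_{\hat{\bs{x}}}=\nabla(\partial_t\phi|_{\hat{\bs{x}}})-(\nabla\bs{w})^{\mathrm T}\nabla\phi$ plays the role of the $\partial_t(\bs{F}^t)^{-1}$ identity.

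What your route gains is uniformity and transparency. Once the commutator is available, \eqref{eq:semi0-1}--\eqref{eq:semi0-3} all reduce to pointwise product-rule and trace manipulations. Your index bookkeeping checks out: $\partial_t(\nabla\bs{v})|_{\hat{\bs{x}}}=-\nabla\bs{v}\,\nabla\bs{w}$, $A:BC=AC^{\mathrm T}:B$, $\mathrm{tr}(\nabla\bs{v}\nabla\bs{w})=\nabla\bs{w}:(\nabla\bs{v})^{\mathrm T}$, and $\omega_k\partial_kw_l\partial_l\varphi_j$ for the last term of \eqref{eq:semi0-3}. This makes explicit where $\nabla\bs{w}$ versus $\nabla\bs{w}^{\mathrm T}$ appears, including in the two identities the paper does not write out.

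The price is that your chain-rule proof of the commutator is Eulerian. It needs $\partial_t\phi$ at a fixed point $\bs{x}$, which on a moving domain makes sense only at interior points; this is harmless, since the boundary has measure zero. It is, however, less natural when the identities are later applied with the piecewise-polynomial map $\bs{X}_{h,i}^t$ and discrete functions that are only elementwise smooth on a moving mesh. The paper's Lagrangian computation only requires $t\mapsto\bs{\varphi}\circ\bs{X}^t$ to be differentiable. Your stated alternative via $\nabla=\bs{F}^{-\mathrm T}\nabla_{\hat{\bs{x}}}$ and $\dot{\bs{F}}\bs{F}^{-1}=\nabla\bs{w}$ is exactly that mechanism, so either version of your argument is sound.
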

\begin{proof}
For conciseness, we only present the re-derivation of
\eqref{eq:semi0-1}, as the remaining two identities can be obtained
in a similar manner. Here, $\bs{F}^t$ denotes the Jacobian matrix of
the flow map $\bs{X}^t$, and $J^t=\det(\bs{F}^t)$ denotes its
Jacobian. To derive \eqref{eq:semi0-1}, by invoking the identities
\begin{equation}\label{identity-F}
    \frac{\partial J}{\partial t}=J(\nabla\cdot\bs{w}),\qquad\frac{\partial(\bs{F}^t)^{-1}}{\partial t}=-(\bs{F}^t)^{-1}\nabla_{\hat{\bs{x}}}\bs{w}(\bs{F}^t)^{-1},
\end{equation}
we obtain
\begin{align*}
    &\ \frac{d}{dt}\int_{\Omega^t}\nabla\bs{\varphi}:\nabla(\bs{\chi}\circ(\bs{X}^t)^{-1})\,d\bs{x}=\frac{d}{dt}\int_{\Omega^0}\nabla_{\hat{\bs{x}}}(\bs{\varphi}\circ\bs{X}^t)(\bs{F}^t)^{-1}:\nabla_{\hat{\bs{x}}}\bs{\chi}(\bs{F}^t)^{-1}J\,d\hat{\bs{x}}\\
    =&\ \int_{\Omega^0}\left[\nabla_{\hat{\bs{x}}}\left(\frac{\partial(\bs{\varphi}\circ\bs{X}^t)}{\partial t}\right)(\bs{F}^t)^{-1}:\nabla_{\hat{\bs{x}}}\bs{\chi}(\bs{F}^t)^{-1}J+\nabla_{\hat{\bs{x}}}(\bs{\varphi}\circ\bs{X}^t)\frac{\partial(\bs{F}^t)^{-1}}{\partial t}:\nabla_{\hat{\bs{x}}}\bs{\chi}(\bs{F}^t)^{-1}J\right.\\
    &\qquad\ +\nabla_{\hat{\bs{x}}}(\bs{\varphi}\circ\bs{X}^t)(\bs{F}^t)^{-1}:\nabla_{\hat{\bs{x}}}\bs{\chi}\frac{\partial(\bs{F}^t)^{-1}}{\partial t}J+\nabla_{\hat{\bs{x}}}(\bs{\varphi}\circ\bs{X}^t)(\bs{F}^t)^{-1}:\nabla_{\hat{\bs{x}}}\bs{\chi}(\bs{F}^t)^{-1}\frac{\partial J}{\partial t}\bigg]\,d\hat{\bs{x}}\\
    =&\ \int_{\Omega^t}\left[\nabla\left(\left.\frac{\partial\bs{\varphi}}{\partial t}\right|_{\hat{\bs{x}}}\right):\nabla(\bs{\chi}\circ(\bs{X}^t)^{-1})-\nabla\bs{\varphi}\nabla\bs{w}:\nabla(\bs{\chi}\circ(\bs{X}^t)^{-1})\right.\\
    &\qquad\ -\nabla\bs{\varphi}:\nabla(\bs{\chi}\circ(\bs{X}^t)^{-1})\nabla\bs{w}+\nabla\bs{\varphi}:\nabla(\bs{\chi}\circ(\bs{X}^t)^{-1})(\nabla\cdot\bs{w})\bigg]\,d\bs{x}\\
    =&\ \int_{\Omega^t}\left[\nabla\left(\left.\frac{\partial\bs{\varphi}}{\partial t}\right|_{\hat{\bs{x}}}\right):\nabla(\bs{\chi}\circ(\bs{X}^t)^{-1})-\nabla\bs{\varphi}\left(\nabla\bs{w}+\nabla\bs{w}^\mathrm{T}\right):\nabla(\bs{\chi}\circ(\bs{X}^t)^{-1})\right.\\
    &\qquad\ +\nabla\bs{\varphi}:\nabla(\bs{\chi}\circ(\bs{X}^t)^{-1})(\nabla\cdot\bs{w})\bigg]\,d\bs{x}.
\end{align*}
\end{proof}

\section{Semi-discrete ALE-finite element approximation}\label{sec:semi}

\subsection{Discrete ALE mapping and semi-discrete scheme}

Let $h$ ($0<h<1$) denote the mesh size, and let $\mathcal{T}_{h,i}^0$ be a quasi-uniform triangulation of $\Omega_i^0$ ($i=1,2$). We further assume that no element of $\mathcal{T}_{h,i}^0$ has two edges lying on $\partial\Omega_i^0$, and that no element intersects the interface $\Gamma^0$. Moreover, $\mathcal{T}_h^0=\mathcal{T}_{h,1}^0\cup\mathcal{T}_{h,2}^0$ is conforming across the interface $\Gamma^0$.

Then, for any $t\in(0,T]$, we consider the discrete ALE mapping corresponding to $\bs{X}_i^t$ ($i=1,2$), approximated by a piecewise polynomial Lagrangian finite element of degree $k$, denoted by $\bs{X}_{h,i}^t$ ($i=1,2$), and defined as
\begin{align*}
    \bs{X}_{h,i}^t:\,\Omega_i^0&\rightarrow\Omega_i^t,\\
    \hat{\bs{x}}_i&\mapsto\bs{x}_i(\hat{\bs{x}}_i,t),
\end{align*}
where $\bs{X}_{h,i}^t$ ($i=1,2$) is invertible as well. Likewise, the discrete mesh velocity is defined as
\begin{align*}
    \bs{w}_{h,i}:\,\Omega_i^t\times(0,T]&\rightarrow\mathbb{R}^d,\\
    (\bs{x}_i,t)&\mapsto\bs{w}_{h,i}(\bs{x}_i,t)=\frac{\partial\bs{X}_{h,i}^t}{\partial t}\circ\left(\bs{X}_{h,i}^t\right)^{-1},
\end{align*}
which leads to the discrete ALE-time derivative
\begin{align*}
    \left.\frac{\partial\bs{u}_i}{\partial t}\right|_{\hat{\bs{x}}}^h:\,\Omega_i^t\times(0,T]&\rightarrow\mathbb{R}^d,\\
    (\bs{x}_i,t)&\mapsto\left.\frac{\partial\bs{u}_i}{\partial t}\right|_{\hat{\bs{x}}}^h(\bs{x}_i,t)=\frac{\partial\bs{u}_i}{\partial t}(\bs{x}_i,t)+(\bs{w}_{h,i}(\bs{x}_i,t)\cdot\nabla)\bs{u}_i(\bs{x}_i,t).
\end{align*}

For each $i=1,2$, let $\mathcal{T}_{h,i}^t$ be the image of
$\mathcal{T}_{h,i}^0$ under the discrete ALE mapping
$\bs{X}_{h,i}^t$. Then, $\bs{X}_{h,i}^t$ ($i=1,2$) represents a
moving mesh that adapts to the moving interface/boundary. In
practice, the discrete ALE mapping $\bs{X}_{h,i}^t$ ($i=1,2$) can be
constructed in various ways. A common approach is the harmonic
extension technique
\cite{formaggia1999stability,gastaldi2001priori,lan2020monolithic,lan2020finite},
in which $\bs{X}_{h,i}^t$ ($i=1,2$) is defined as the solution to
the following Laplace equation
\begin{equation*}
    \left\{\begin{alignedat}{2}
        -\Delta_{\hat{\bs{x}}_i}\bs{X}_{h,i}^t&=0,&&\text{in $\Omega_i^0$},\\
        \bs{X}_{h,i}^t&=\bs{0},&&\text{on $\partial\Omega_i^0\setminus\Gamma^0$},\\
        \bs{X}_{h,i}^t&=\bs{x}_\Gamma(\bs{x}(\hat{\bs{x}},t),t),\qquad&&\text{on $\Gamma^0$},
    \end{alignedat}\right.
\end{equation*}
where $\bs{x}_\Gamma$ denotes the prescribed interface displacement
at time $t\in[0,T]$, inducing a displacement field that describes
the motion of the computational domain and the corresponding mesh
motion.

Similar to the classical elliptic finite element analysis, the following error estimates hold for the ALE mapping $\bs{X}_i^t\in H^1(0,T;(W^{2,\infty}(\Omega_i^0)\cap W^{k+1,p}(\Omega_i^0))^d)$ and the mesh velocity $\bs{w}_i\in H^1(0,T;(W^{2,\infty}(\Omega_i^t)\cap W^{k+1,p}(\Omega_i^t))^d)$ ($i=1,2$), which can be found in \cite{gastaldi2001priori,san2009convergence,ciarlet2002finite,brenner2008mathematical}, and are recalled as follows:
\begin{align}
    \label{X-conv1}
    &\|\bs{X}_i^t-\bs{X}_{h,i}^t\|_{(L^p(\Omega_i^0))^d}+h\|(\bs{X}_i^t-\bs{X}_{h,i}^t)\|_{(W^{1,p}(\Omega_i^0))^d}\le Ch^{k+1}\|\bs{X}_i^t\|_{(W^{k+1,p}(\Omega_i^0))^d},\quad\forall\,1<p<\infty,\\
    \label{w-conv1}
    &\|\bs{w}_i-\bs{w}_{h,i}\|_{(L^p(\Omega_i^t))^d}+h\|(\bs{w}_i-\bs{w}_{h,i})\|_{(W^{1,p}(\Omega_i^t))^d}\le Ch^{k+1}\|\bs{w}_i\|_{(W^{k+1,p}(\Omega_i^t))^d},\quad\forall\,1<p<\infty,\\
    \label{X-conv2}
    &\|\bs{X}_i^t-\bs{X}_{h,i}^t\|_{(L^\infty(\Omega_i^0))^d}+h\|(\bs{X}_i^t-\bs{X}_{h,i}^t)\|_{(W^{1,\infty}(\Omega_i^0))^d}\le Ch^2|\ln{h}|\|\bs{X}_i^t\|_{(W^{2,\infty}(\Omega_i^0))^d},\\
    \label{w-conv2}
    &\|\bs{w}_i-\bs{w}_{h,i}\|_{(L^\infty(\Omega_i^t))^d}+h\|(\bs{w}_i-\bs{w}_{h,i})\|_{(W^{1,\infty}(\Omega_i^t))^d}\le Ch^2|\ln{h}|\|\bs{w}_i\|_{(W^{2,\infty}(\Omega_i^t))^d}.
\end{align}
Therefore, we readily obtain the following boundedness property for the ALE mesh velocity
\begin{equation}
    \label{w-bound}
    \|\bs{w}_{h,i}\|_{(W^{1,\infty}(\Omega_i^t))^d}\le C,
\end{equation}
which guarantees that the ALE-induced moving mesh $\mathcal{T}_{h,i}^t$ ($i=1,2$) remains shape-regular without mesh distortion along the time, provided that the prescribed interface displacement $\bs{x}_\Gamma$ does not cause excessively large displacement or deformation of $\Omega_i^t$ ($i=1,2$).

We introduce the following discrete ALE finite element spaces using Taylor-Hood finite elements
\begin{align*}
    \bs{U}_h^t&=\left\{(\bs{v}_{h,1},\bs{v}_{h,2})\in\bs{U}^t\left|\,\bs{v}_{h,1}|_K\in(P^k(K))^d,\ \forall K\in\mathcal{T}_{h,1}^t,\ \bs{v}_{h,2}|_K\in(P^k(K))^d,\ \forall K\in\mathcal{T}_{h,2}^t\right.\right\},\\
    Q_h^t&=\left\{(q_{h,1},q_{h,2})\in Q^t\left|\,q_{h,1}|_K\in P^{k-1}(K),\ \forall K\in\mathcal{T}_{h,1}^t,\ q_{h,2}|_K\in P^{k-1}(K),\ \forall K\in\mathcal{T}_{h,2}^t\right.\right\},
\end{align*}
for $k\ge2$. Then, the corresponding semi-discrete ALE finite
element discretization is defined as follows: find
$(\bs{u}_{h,1},\bs{u}_{h,2})\in\bs{U}_h^t$ and $(p_{h,1},p_{h,2})\in
Q_h^t$ such that
\begin{eqnarray}
    &&\sum\limits_{i=1}^2\left[\left(\frac{\partial\bs{u}_{h,i}}{\partial t}\big|_{\hat{\bs{x}}}^h,\bs{v}_{h,i}\right)_{\Omega_i^t}+(\mu_i\nabla\bs{u}_{h,i},\nabla\bs{v}_{h,i})_{\Omega_i^t}-((\bs{w}_{h,i}\cdot\nabla)\bs{u}_{h,i},\bs{v}_{h,i})_{\Omega_i^t}\right.\notag\\
    &&\qquad-(p_{h,i},\nabla\cdot\bs{v}_{h,i})_{\Omega_i^t}\Bigg]=\sum\limits_{i=1}^2(\bs{f}_i,\bs{v}_{h,i})_{\Omega_i^t}+\langle\bs{g},\bs{v}_{h,1}\rangle_{\Gamma^t},\quad\forall\,(\bs{v}_{h,1},\bs{v}_{h,2})\in\bs{U}_h^t,\label{semi1}\\
    &&\sum\limits_{i=1}^2(\nabla\cdot\bs{u}_{h,i},q_{h,i})_{\Omega_i^t}=0,\quad\forall\,(q_{h,1},q_{h,2})\in Q_h^t.\label{semi2}
\end{eqnarray}

For the purpose of error analysis carried out in the sequel, we
assume that the following regularity properties are held for the
primary variables $\bs{u}_i,p_i$ and their ALE-time derivatives in
$\Omega_i^t\times[0,T]$ ($i=1,2$):
\begin{align}
    \label{reg1}
    &\ \,\bs{u}_i\in L^\infty(0,T;(H^{k+1}(\Omega_i^t))^d)\cap
    L^2(0,T;(W^{2,\infty}(\Omega_i^t))^d),\\
    \label{reg2}
    &\left.\frac{\partial\bs{u}_i}{\partial t}\right|_{\hat{\bs{x}}}
    \in L^2(0,T;(H^{k+1}(\Omega_i^t))^d),\\
    \label{reg3}
    &\left.\frac{\partial^2\bs{u}_i}{\partial t^2}\right|_{\hat{\bs{x}}}\in L^2(0,T;(L^2(\Omega_i^t))^d),\\
    \label{reg4}
    &\ \,p_i\in L^\infty(0,T;H^k(\Omega_i^t))\cap
    L^2(0,T;W^{1,\infty}(\Omega_i^t)),\\
    \label{reg5}
    &\left.\frac{\partial p_i}{\partial t}\right|_{\hat{\bs{x}}}\in
    L^2(0,T;H^k(\Omega_i^t)).
\end{align}

\subsection{\texorpdfstring{$H^1$}{H\^{}1}-projection of Stokes interface problems}
To obtain optimal error estimates in either $H^1$ or $L^2$ norm for
finite element approximations to time-dependent problems, an
$H^1$-projection is generally required. To this end, we introduce a
specific $H^1$-projection for the Stokes moving interface problem
\eqref{prob1}-\eqref{prob11}, which accounts for the effects of the
ALE mesh motion, and is defined as follows: find
$(\tilde{\bs{u}}_1,\tilde{\bs{u}}_2)\in\bs{U}_h^t$ and
$(\tilde{p}_1,\tilde{p}_2)\in Q_h^t$ satisfying
$\int_{\Omega_i^t}(\tilde p_i-p_i)d\bs{x}_i=0$ for $i=1,2$, such
that
\begin{align}
    \label{proj1}
    a(\bs{u}_1-\tilde{\bs{u}}_1,\bs{u}_2-\tilde{\bs{u}}_2;\bs{v}_{h,1},\bs{v}_{h,2})-b(\bs{v}_{h,1},\bs{v}_{h,2};p_1-\tilde{p}_1,p_2-\tilde{p}_2)&=0,\qquad\forall\,(\bs{v}_{h,1},\bs{v}_{h,2})\in\bs{U}_h^t,\\
    \label{proj2}
    b(\bs{u}_1-\tilde{\bs{u}}_1,\bs{u}_2-\tilde{\bs{u}}_2;q_{h,1},q_{h,2})&=0,\qquad\forall\,(q_{h,1},q_{h,2})\in Q_h^t,
\end{align}
where
\begin{align*}
    a(\bs{\varphi}_1,\bs{\varphi}_2;\bs{\psi}_1,\bs{\psi}_2)&=\sum_{i=1}^2\left[(\mu_i\nabla\bs{\varphi}_i,\nabla\bs{\psi}_i)_{\Omega_i^t}-((\bs{w}_{h,i}\cdot\nabla)\bs{\varphi}_i,\bs{\psi}_i)_{\Omega_i^t}+\kappa(\bs{\varphi}_i,\bs{\psi}_i)_{\Omega_i^t}\right],\\
    b(\bs{\varphi}_1,\bs{\varphi}_2;\chi_1,\chi_2)&=\sum_{i=1}^2(\nabla\cdot\bs{\varphi}_i,\chi_i)_{\Omega_i^t},
\end{align*}
provided that $\bs{w}_{h,i}$ ($i=1,2$) is given, $\|\bs{w}_{h,i}\|_{(L^\infty(\Omega_i^t))^d}\le M_i$ ($i=1,2$) due to \eqref{w-bound}, and $\kappa=\max\bigg\{\dfrac{M_1^2}{2\mu_1}+\dfrac{\mu_1}{2}+M_1,\,\dfrac{M_2^2}{2\mu_2}+\dfrac{\mu_2}{2}+M_2\bigg\}$.

Then, we have the following error estimates for this particular $H^1$-projection using Taylor-Hood finite elements.

\begin{lemma}
\label{lem:semi1} With the regularity assumptions
\eqref{reg1}-\eqref{reg5} holding for
$((\bs{u}_1,\bs{u}_2),(p_1,p_2))$ to \eqref{weak1}, there exists a 
solution
$((\tilde{\bs{u}}_1,\tilde{\bs{u}}_2),(\tilde{p}_1,\tilde{p}_2))\in\bs{U}_h^t\times
Q_h^t$ to \eqref{proj1}-\eqref{proj2} for any $t\in[0,T]$ such that
\begin{equation}
    \label{eq:semi1}
    \sum_{i=1}^2\|\bs{u}_i-\tilde{\bs{u}}_i\|_{0,\Omega_i^t}+h\sum_{i=1}^2\left[\|\bs{u}_i-\tilde{\bs{u}}_i\|_{1,\Omega_i^t}+\|p_i-\tilde{p}_i\|_{0,\Omega_i^t}\right]\le Ch^{k+1}\sum_{i=1}^2\left[\|\bs{u}_i\|_{k+1,\Omega_i^t}+\|p_i\|_{k,\Omega_i^t}\right].
\end{equation}
\end{lemma}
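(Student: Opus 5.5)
Our plan is the standard Brezzi-type argument: prove well-posedness and the $H^1$/$L^2$-pressure estimate, then apply an Aubin--Nitsche duality argument for the $L^2$ velocity bound.

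\emph{Step 1: coercivity and discrete inf-sup.} First we show that $a(\cdot;\cdot)$ is coercive on $\bs{U}^t$. For $\bs{\varphi}=\bs{\psi}$, Young's inequality with \eqref{w-bound} gives
\begin{equation*}
|((\bs{w}_{h,i}\cdot\nabla)\bs{\varphi}_i,\bs{\varphi}_i)|\le M_i\|\nabla\bs{\varphi}_i\|_0\|\bs{\varphi}_i\|_0\le \tfrac{\mu_i}{2}\|\nabla\bs{\varphi}_i\|_0^2+\tfrac{M_i^2}{2\mu_i}\|\bs{\varphi}_i\|_0^2 .
\end{equation*}
By the choice of $\kappa$, this yields $a(\bs{\varphi};\bs{\varphi})\ge \sum_i \min(\mu_i/2,\mu_i/2+M_i)\|\bs{\varphi}_i\|_1^2$. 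Continuity of $a$ and $b$ follows from \eqref{w-bound}. Because the mesh $\mathcal{T}_h^t$ is shape-regular (again by \eqref{w-bound}) and conforming across $\Gamma^t$, the Taylor--Hood pair satisfies a discrete inf-sup condition on $\bs{U}_h^t\times Q_h^t$. The pressure here is discontinuous across $\Gamma^t$ and is normalized by the prescribed mean on each subdomain. We obtain the inf-sup condition subdomain-wise, using the standard Taylor--Hood result on each $\Omega_i^t$ with zero trace on all of $\partial\Omega_i^t$ for the mean-zero parts. The remaining constant-per-subdomain mode is fixed by the mean condition, which is compatible with \eqref{proj1}--\eqref{proj2}. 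Existence and uniqueness then follow from Brezzi theory.

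\emph{Step 2: $H^1$ and pressure estimate.} The quasi-optimality bound from Brezzi theory gives
\begin{equation*}
\|\bs{u}-\tilde{\bs{u}}\|_1+\|p-\tilde p\|_0\le C\inf_{\bs{v}_h,q_h}\big(\|\bs{u}-\bs{v}_h\|_1+\|p-q_h\|_0\big).
\end{equation*}
We bound the infimum with the Lagrange interpolant $I_h\bs{u}$, which is continuous across $\Gamma^t$ because the mesh fits the interface, and with the $L^2$ projection of $p$ on each subdomain. This gives $Ch^k\sum_i(\|\bs{u}_i\|_{k+1}+\|p_i\|_k)$. The mapped elements are curved of degree $k$, but standard interpolation still applies since $\bs{X}_{h,i}^t$ is $W^{1,\infty}$-bounded with bounded inverse.

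\emph{Step 3: $L^2$ estimate by duality.} Let $\bs{e}=\bs{u}-\tilde{\bs{u}}$. We pose the adjoint interface Stokes problem: find $(\bs{z},r)$ with $\bs{z}\in\bs{U}^t$ such that
\begin{equation*}
a(\bs{v};\bs{z})-b(\bs{v};r)=(\bs{e},\bs{v}),\qquad b(\bs{z};q)=0 .
\end{equation*}
Its strong form contains $-\mu_i\Delta\bs{z}_i+\operatorname{div}(\bs{w}_{h,i}\otimes\bs{z}_i)$-type terms together with homogeneous interface flux conditions. We then test with $\bs{v}=\bs{e}$ and use Galerkin orthogonality \eqref{proj1}--\eqref{proj2} to subtract the interpolants $I_h\bs{z}$ and $\Pi_h r$. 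In the term $b(\bs{e};r)$, orthogonality \eqref{proj2} lets us replace $r$ by $r-\Pi_h r$. This gives
\begin{equation*}
\|\bs{e}\|_0^2\le C\big(\|\bs{e}\|_1+\|p-\tilde p\|_0\big)\big(\|\bs{z}-I_h\bs{z}\|_1+\|r-\Pi_h r\|_0\big)\le Ch^{k}(\cdots)\,h\big(\|\bs{z}\|_2+\|r\|_1\big).
\end{equation*}
The estimate is closed by the piecewise regularity $\sum_i(\|\bs{z}_i\|_{2,\Omega_i^t}+\|r_i\|_{1,\Omega_i^t})\le C\|\bs{e}\|_0$. We fix the pressure mode of $r$ through the mean conditions as in Step 1.

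\emph{Main obstacle.} The hardest step is this regularity estimate. It requires $H^2$ regularity for a Stokes interface problem with jump coefficients $\mu_1\ne\mu_2$ and a convection term involving only the $W^{1,\infty}$ field $\bs{w}_{h,i}$, posed on the moving domains $\Omega_i^t$. We would argue as follows. Treat the convection term as a right-hand side in $L^2$, which is possible since $\bs{w}_{h,i}\in W^{1,\infty}$ uniformly. Then invoke known piecewise $H^2\times H^1$ regularity for Stokes interface problems with a smooth interface $\Gamma^t$ that stays away from the convex polygonal $\partial\Omega$. The constant is uniform in $t$ because $\bs{X}_i^t\in W^{2,\infty}$ gives uniformly smooth interfaces.

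A secondary subtlety is that $\Omega_i^t$ is the image of the discrete map $\bs{X}_{h,i}^t$, so the interface is only piecewise polynomial. We would handle this by assuming, as the paper implicitly does, that the discrete and exact domains coincide, or by absorbing the geometric perturbation, which is of order $h^{k+1}$ by \eqref{X-conv1}.
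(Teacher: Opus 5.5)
Your route is the paper's: continuity and coercivity of $a$, a discrete inf-sup condition, Brezzi quasi-optimality plus interpolation for the $H^1$ and pressure bounds, and an Aubin--Nitsche argument with the adjoint interface problem under an assumed piecewise $H^2\times H^1$ regularity. Your adjoint $a(\bs{v};\bs{z})-b(\bs{v};r)=(\bs{e},\bs{v})$ is the paper's $a^\star$-formulation, since $a^\star(\bs{\varphi};\bs{\psi})=a(\bs{\psi};\bs{\varphi})$. Your duality bound is also correct: the pressure factor comes from $a(\bs{e};I_h\bs{z})=b(I_h\bs{z};p-\tilde p)=b(I_h\bs{z}-\bs{z};p-\tilde p)$, using \eqref{proj1} and $b(\bs{z};\cdot)=0$. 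The step that fails is the one you add beyond the paper, namely the inf-sup justification in which both subdomain constants are ``fixed by the mean condition''.

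Velocities vanishing on all of $\partial\Omega_i^t$ are $b$-orthogonal to piecewise constants. Your subdomain-wise Taylor--Hood argument therefore only controls pressures with zero mean on each $\Omega_i^t$. For a general $(\bs{v}_{h,1},\bs{v}_{h,2})\in\bs{U}_h^t$, however,
$b(\bs{v}_{h,1},\bs{v}_{h,2};c_1,c_2)=(c_1-c_2)\int_{\Gamma^t}\bs{v}_{h,1}\cdot\bs{n}_1\,d\bs{s}$,
so only the global constant $(c,c)$ lies in the kernel. The jump $c_1-c_2$ is not a normalization. It is the Lagrange multiplier for the constraint $\int_{\Gamma^t}\tilde{\bs{u}}_1\cdot\bs{n}_1\,d\bs{s}=0$, which \eqref{proj2} imposes when tested with $(q_{h,1},q_{h,2})=(1,0)$ (recall $\nabla\cdot\bs{u}_1=0$).

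Prescribing both subdomain means removes this multiplier but keeps the constraint. The system then has one more independent equation than unknowns and in general has no solution, so the two mean conditions are not compatible with \eqref{proj1}--\eqref{proj2}. The paper's definition, with its unproved assertion of inf-sup on $\bs{U}_h^t\times Q_h^t$, has the same imprecision, but you should not claim compatibility.

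To repair the step, prove the inf-sup condition on $Q_h^t/\mathbb{R}$. Combine your subdomain-wise bound for the mean-zero parts with one fixed velocity having nonzero flux through $\Gamma^t$, which controls $c_1-c_2$; this is a Boland--Nicolaides or macroelement-type argument. Then impose a single normalization, such as $\sum_i\int_{\Omega_i^t}(\tilde p_i-p_i)\,d\bs{x}=0$. Under this normalization $\|p-\tilde p\|_0$ equals the quotient norm, so your Steps 2 and 3 go through unchanged. The adjoint pressure $r$ likewise needs only one normalization, since $b(\bs{e};c,c)=0$.
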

\begin{proof}
Continuity of $a(\cdot,\cdot;\cdot,\cdot)$ and $b(\cdot,\cdot;\cdot,\cdot)$ follows directly from the Cauchy-Schwarz inequality, yielding
\begin{align}
    \label{a-cont}
    |a(\bs{\varphi}_1,\bs{\varphi}_2;\bs{\psi}_1,\bs{\psi}_2)|&\le C\sum_{i=1}^2\|\bs{\varphi}_i\|_{1,\Omega_i^t}\|\bs{\psi}_i\|_{1,\Omega_i^t}\le C\|(\bs{\varphi}_1,\bs{\varphi}_2)\|_1\|(\bs{\psi}_1,\bs{\psi}_2)\|_1,\\
    \label{b-cont}
    |b(\bs{\varphi}_1,\bs{\varphi}_2;\chi_1,\chi_2)|&\le C\sum_{i=1}^2\|\bs{\varphi}_i\|_{1,\Omega_i^t}\|\chi_i\|_{0,\Omega_i^t}\le C\|(\bs{\varphi}_1,\bs{\varphi}_2)\|_1\|(\chi_1,\chi_2)\|_0.
\end{align}
The coercivity of $a(\cdot,\cdot;\cdot,\cdot)$ is obtained by applying the $\varepsilon$-Young's inequality with $\varepsilon=\mu_i$, which gives
\begin{equation}
    \label{a-coer}
    \begin{aligned}
        a(\bs{\varphi}_1,\bs{\varphi}_2;\bs{\varphi}_1,\bs{\varphi}_2)&=\sum_{i=1}^2\left[(\mu_i\nabla\bs{\varphi}_i,\nabla\bs{\varphi}_i)_{\Omega_i^t}-((\bs{w}_{h,i}\cdot\nabla)\bs{\varphi}_i,\bs{\varphi}_i)_{\Omega_i^t}+\kappa(\bs{\varphi}_i,\bs{\varphi}_i)_{\Omega_i^t}\right]\\
        &\ge\sum_{i=1}^2\left[\mu_i\|\nabla\bs{\varphi}_i\|_{0,\Omega_i^t}^2-\left(\frac{\mu_i}{2}\|\nabla\bs{\varphi}_i\|_{0,\Omega_i^t}^2+\frac{M_i^2}{2\mu_i}\|\bs{\varphi}_i\|_{0,\Omega_i^t}^2\right)+\kappa\|\bs{\varphi}_i\|_{0,\Omega_i^t}^2\right]\\
        &=\sum_{i=1}^2\left[\frac{\mu_i}{2}\|\nabla\bs{\varphi}_i\|_{0,\Omega_i^t}^2+\left(\kappa-\frac{M_i^2}{2\mu_i}\right)\|\bs{\varphi}_i\|_{0,\Omega_i^t}^2\right]\ge\min_{i=1,2}\left\{\frac{\mu_i}{2}\right\}\|(\bs{\varphi}_1,\bs{\varphi}_2)\|_1^2.
    \end{aligned}
\end{equation}

On the other hand, considering that the continuous inf-sup condition
associated with the Stokes equations defined in each subdomain
$\Omega_i^t$ exists, so does the discrete inf-sup condition for
Taylor-Hood mixed elements \cite{san2009convergence}, it is not
difficult to know that $b(\cdot,\cdot;\cdot,\cdot)$ also holds its
continuous inf-sup condition in $\bs{U}^t\times Q^t$ and discrete
inf-sup condition in $\bs{U}_h^t\times Q_h^t$. Combining the
existing inf-sup condition with the above continuity and coercivity
properties of $a(\cdot,\cdot;\cdot,\cdot)$ and
$b(\cdot,\cdot;\cdot,\cdot)$, and applying the classical Brezzi's
theory \cite{girault1986finite,boffi2013mixed} together with the
convergence properties of the Taylor-Hood mixed elements
\cite{brezzi1991stability}, we can obtain the well-posedness
property as well as the following error estimate for the proposed
$H^1$-projection,
\begin{equation}
    \label{eq:semi1-errest1}
    \begin{aligned}
        &\ \sum_{i=1}^2\left[\|\bs{u}_i-\tilde{\bs{u}}_i\|_{1,\Omega_i^t}+\|p_i-\tilde{p}_i\|_{0,\Omega_i^t}\right]\le C(\|(\bs{u}_1-\tilde{\bs{u}}_1,\bs{u}_2-\tilde{\bs{u}}_2)\|_1+\|(p_1-\tilde{p}_1,p_2-\tilde{p}_2)\|_0)\\
        \le&\ C\left(\inf_{(\bs{v}_{h,1},\bs{v}_{h,2})\in\bs{U}_h^t}\|(\bs{u}_1-\bs{v}_{h,1},\bs{u}_2-\bs{v}_{h,2})\|_1+\inf_{(q_{h,1},q_{h,2})\in Q_h^t}\|(p_1-q_{h,1},p_2-q_{h,2})\|_0\right)\\
        \le&\ C\left(\inf_{(\bs{v}_{h,1},\bs{v}_{h,2})\in\bs{U}_h^t}\sum_{i=1}^2\|\bs{u}_i-\bs{v}_{h,i}\|_{1,\Omega_i^t}+\inf_{(q_{h,1},q_{h,2})\in Q_h^t}\sum_{i=1}^2\|p_i-q_{h,i}\|_{0,\Omega_i^t}\right)\\
        \le&\ Ch^k\sum_{i=1}^2\left[\|\bs{u}_i\|_{k+1,\Omega_i^t}+\|p_i\|_{k,\Omega_i^t}\right].
    \end{aligned}
\end{equation}

Next, we can define an adjoint problem of \eqref{proj1}-\eqref{proj2} as follows: given $(\tilde{\bs{f}}_1,\tilde{\bs{f}}_2)\in(L^2(\Omega_1^t))^d\times(L^2(\Omega_2^t))^d$, find $(\bs{v}_1,\bs{v}_2)\in\bs{U}^t$ and $(q_1,q_2)\in Q^t$, such that
\begin{align}
    \label{projadj1}
    a^\star(\bs{v}_1,\bs{v}_2;\bs{\psi}_1,\bs{\psi}_2)-b(\bs{\psi}_1,\bs{\psi}_2;q_1,q_2)&=\sum_{i=1}^2(\tilde{\bs{f}}_i,\bs{\psi}_i)_{\Omega_i^t},\qquad\forall\,(\bs{\psi}_1,\bs{\psi}_2)\in\bs{U}^t,\\
    \label{projadj2}
    b(\bs{v}_1,\bs{v}_2;\chi_1,\chi_2)&=0,\qquad\forall\,(\chi_1,\chi_2)\in Q^t,
\end{align}
where
\begin{equation*}
    a^\star(\bs{\varphi}_1,\bs{\varphi}_2;\bs{\psi}_1,\bs{\psi}_2)=\sum_{i=1}^2\left[(\mu_i\nabla\bs{\varphi}_i,\nabla\bs{\psi}_i)_{\Omega_i^t}+((\bs{w}_{h,i}\cdot\nabla)\bs{\varphi}_i,\bs{\psi}_i)_{\Omega_i^t}+((\nabla\cdot\bs{w}_{h,i})\bs{\varphi}_i,\bs{\psi}_i)_{\Omega_i^t}+\kappa(\bs{\varphi}_i,\bs{\psi}_i)_{\Omega_i^t}\right].
\end{equation*}
The adjoint property of \eqref{projadj1}-\eqref{projadj2} can be easily verified by noting that
\begin{equation}
    \label{propadj}
    a^\star(\bs{\varphi}_1,\bs{\varphi}_2;\bs{\psi}_1,\bs{\psi}_2)=a(\bs{\psi}_1,\bs{\psi}_2;\bs{\varphi}_1,\bs{\varphi}_2).
\end{equation}
The equivalent strong form of \eqref{projadj1}-\eqref{projadj2} can be defined as: find $(\bs{v}_1,\bs{v}_2)\in(H^2(\Omega_1^t))^d\times(H^2(\Omega_2^t))^d$ and $(q_1,q_2)\in H^1(\Omega_1^t)\times H^1(\Omega_2^t)$, such that
\begin{equation}\label{strong-adjoint}
    \left\{\begin{alignedat}{2}
        -\nabla\cdot(\mu_1\nabla\bs{v}_1)+(\bs{w}_{h,1}\cdot\nabla)\bs{v}_1+(\nabla\cdot\bs{w}_{h,1})\bs{v}_1+\kappa\bs{v}_1+\nabla q_1&=\tilde{\bs{f}}_1,\qquad&&\text{in $\Omega_1^t$},\\
        \nabla\cdot\bs{v}_1&=0,&&\text{in $\Omega_1^t$},\\
        \bs{v}_1&=\bs{0},&&\text{on $\partial\Omega_1^t\setminus\Gamma^t$},\\
        -\nabla\cdot(\mu_2\nabla\bs{v}_2)+(\bs{w}_{h,2}\cdot\nabla)\bs{v}_2+(\nabla\cdot\bs{w}_{h,2})\bs{v}_2+\kappa\bs{v}_2+\nabla q_2&=\tilde{\bs{f}}_2,&&\text{in $\Omega_2^t$},\\
        \nabla\cdot\bs{v}_2&=0,&&\text{in $\Omega_2^t$},\\
        \bs{v}_2&=\bs{0},&&\text{on $\partial\Omega_2^t\setminus\Gamma^t$},\\
        \bs{v}_1&=\bs{v}_2,&&\text{on $\Gamma^t$},\\
        (-q_1\bs{I}+\mu_1\nabla\bs{v}_1)\bs{n}_1+(-q_2\bs{I}+\mu_2\nabla\bs{v}_2)\bs{n}_2&=\bs{0},&&\text{on
        $\Gamma^t$},
    \end{alignedat}\right.
\end{equation}
which belongs to a class of stationary Stokes interface problems
\cite{shibata2003resolvent,olshanskii2006analysis,hansbo2014cut},
and therefore it is reasonable to assume that the following
regularity properties hold for \eqref{strong-adjoint}:
\begin{equation}
    \label{regadj1}
    \sum_{i=1}^2\left[\|\bs{v}_i\|_{2,\Omega_i^t}+\|q_i\|_{1,\Omega_i^t}\right]\le C\sum_{i=1}^2\|\tilde{\bs{f}}_i\|_{0,\Omega_i^t}.
\end{equation}

Let $\bs{\psi}_i=\bs{u}_i-\tilde{\bs{u}}_i$ and $\chi_i=p_i-\tilde{p}_i$ in \eqref{projadj1}-\eqref{projadj2}, take any $(\bs{v}_{h,1},\bs{v}_{h,2})\in\bs{U}_h^t$ and $(q_{h,1},q_{h,2})\in Q_h^t$, and apply the $H^1$-projection \eqref{proj1}-\eqref{proj2} together with the adjoint property \eqref{propadj}, which yields
\begin{alignat*}{2}
    &\ \sum_{i=1}^2(\tilde{\bs{f}}_i,\bs{u}_i-\tilde{\bs{u}}_i)_{\Omega_i^t}=a^\star(\bs{v}_1,\bs{v}_2;\bs{u}_1-\tilde{\bs{u}}_1,\bs{u}_2-\tilde{\bs{u}}_2)-b(\bs{u}_1-\tilde{\bs{u}}_1,\bs{u}_2-\tilde{\bs{u}}_2;q_1,q_2)&\text{(using \eqref{projadj1})}\\
    =&\ a^\star(\bs{v}_1-\bs{v}_{h,1},\bs{v}_2-\bs{v}_{h,2};\bs{u}_1-\tilde{\bs{u}}_1,\bs{u}_2-\tilde{\bs{u}}_2)-b(\bs{u}_1-\tilde{\bs{u}}_1,\bs{u}_2-\tilde{\bs{u}}_2;q_1-q_{h,1},q_2-q_{h,2})\\
    &+a^\star(\bs{v}_{h,1},\bs{v}_{h,2};\bs{u}_1-\tilde{\bs{u}}_1,\bs{u}_2-\tilde{\bs{u}}_2)-b(\bs{u}_1-\tilde{\bs{u}}_1,\bs{u}_2-\tilde{\bs{u}}_2;q_{h,1},q_{h,2})\\
    =&\ a(\bs{u}_1-\tilde{\bs{u}}_1,\bs{u}_2-\tilde{\bs{u}}_2;\bs{v}_1-\bs{v}_{h,1},\bs{v}_2-\bs{v}_{h,2})-b(\bs{u}_1-\tilde{\bs{u}}_1,\bs{u}_2-\tilde{\bs{u}}_2;q_1-q_{h,1},q_2-q_{h,2})\qquad&\text{(using \eqref{propadj})}\\
    &+a(\bs{u}_1-\tilde{\bs{u}}_1,\bs{u}_2-\tilde{\bs{u}}_2;\bs{v}_{h,1},\bs{v}_{h,2})-b(\bs{u}_1-\tilde{\bs{u}}_1,\bs{u}_2-\tilde{\bs{u}}_2;q_{h,1},q_{h,2})\\
    &-b(\bs{v}_{h,1},\bs{v}_{h,2};p_1-\tilde{p}_1,p_2-\tilde{p}_2)+b(\bs{v}_1,\bs{v}_2;p_1-\tilde{p}_1,p_2-\tilde{p}_2)\\
    &-b(\bs{v}_1-\bs{v}_{h,1},\bs{v}_2-\bs{v}_{h,2};p_1-\tilde{p}_1,p_2-\tilde{p}_2)\\
    =&\ a(\bs{u}_1-\tilde{\bs{u}}_1,\bs{u}_2-\tilde{\bs{u}}_2;\bs{v}_1-\bs{v}_{h,1},\bs{v}_2-\bs{v}_{h,2})-b(\bs{u}_1-\tilde{\bs{u}}_1,\bs{u}_2-\tilde{\bs{u}}_2;q_1-q_{h,1},q_2-q_{h,2})\\
    &-b(\bs{v}_1-\bs{v}_{h,1},\bs{v}_2-\bs{v}_{h,2};p_1-\tilde{p}_1,p_2-\tilde{p}_2)&\hspace*{-8em}\text{(using \eqref{proj1}-\eqref{proj2} and \eqref{projadj2})}\\
    \le&\ C\left(\sum_{i=1}^2\left[\|\bs{u}_i-\tilde{\bs{u}}_i\|_{1,\Omega_i^t}+\|p_i-\tilde{p}_i\|_{0,\Omega_i^t}\right]\right)\left(\sum_{i=1}^2\left[\|\bs{v}_i-\bs{v}_{h,i}\|_{1,\Omega_i^t}+\|q_i-q_{h,i}\|_{0,\Omega_i^t}\right]\right).
\end{alignat*}
Then, we have
\begin{equation}
    \label{eq:semi1-errest2}
    \begin{aligned}
        &\ \|(\bs{u}_1-\tilde{\bs{u}}_1,\bs{u}_2-\tilde{\bs{u}}_2)\|_0=\sup_{(\tilde{\bs{f}}_1,\tilde{\bs{f}}_2)\in(L^2(\Omega_1^t))^d\times(L^2(\Omega_2^t))^d}\frac{\displaystyle\sum_{i=1}^2(\tilde{\bs{f}}_i,\bs{u}_i-\tilde{\bs{u}}_i)_{\Omega_i^t}}{\|(\tilde{\bs{f}}_1,\tilde{\bs{f}}_2)\|_0}\\
        \le&\ \sup_{(\tilde{\bs{f}}_1,\tilde{\bs{f}}_2)\in(L^2(\Omega_1^t))^d\times(L^2(\Omega_2^t))^d}\frac{\displaystyle C\left(\sum_{i=1}^2\left[\|\bs{u}_i-\tilde{\bs{u}}_i\|_{1,\Omega_i^t}+\|p_i-\tilde{p}_i\|_{0,\Omega_i^t}\right]\right)\left(\sum_{i=1}^2\left[\|\bs{v}_i-\bs{v}_{h,i}\|_{1,\Omega_i^t}+\|q_i-q_{h,i}\|_{0,\Omega_i^t}\right]\right)}{\displaystyle\sum_{i=1}^2\|\tilde{\bs{f}}_i\|_{0,\Omega_i^t}}\\
        \le&\ \sup_{(\tilde{\bs{f}}_1,\tilde{\bs{f}}_2)\in(L^2(\Omega_1^t))^d\times(L^2(\Omega_2^t))^d}\frac{\displaystyle Ch^{k+1}\left(\sum_{i=1}^2\left[\|\bs{u}_i\|_{k+1,\Omega_i^t}+\|p_i\|_{k,\Omega_i^t}\right]\right)\left(\sum_{i=1}^2\left[\|\bs{v}_i\|_{2,\Omega_i^t}+\|q_i\|_{1,\Omega_i^t}\right]\right)}{\displaystyle\sum_{i=1}^2\|\tilde{\bs{f}}_i\|_{0,\Omega_i^t}}\\
        \le&\ Ch^{k+1}\sum_{i=1}^2\left[\|\bs{u}_i\|_{k+1,\Omega_i^t}+\|p_i\|_{k,\Omega_i^t}\right],
    \end{aligned}
\end{equation}
where the convergence properties of the Taylor-Hood element and the regularity assumption \eqref{regadj1} are applied. Combining this with \eqref{eq:semi1-errest1}, we obtain the desired error estimate in the $L^2$-norm shown in \eqref{eq:semi1}.
\end{proof}

In the following two lemmas, we further employ the Aubin-Nitsche duality argument to establish optimal $H^{-1}$-type error estimates for the $H^1$-projection of the pressure and for the gradient of the $H^1$-projection of the velocity, which play a crucial role in deriving optimal-order approximation properties for the ALE-time derivative of this projection.

\begin{lemma}
\label{lem:semi2} Let $((\bs{u}_1,\bs{u}_2),(p_1,p_2))$ be the
solution to \eqref{weak1} satisfying the regularity properties
\eqref{reg1}-\eqref{reg5}, and let
$((\tilde{\bs{u}}_1,\tilde{\bs{u}}_2),(\tilde{p}_1,\tilde{p}_2))$ be
the solution to \eqref{proj1}-\eqref{proj2}. Then, for any
$(\tilde{g}_1,\tilde{g}_2)\in H^1(\Omega_1^t)\times
H^1(\Omega_2^t)$, the following error estimate holds
\begin{equation}
    \label{eq:semi2}
    \sum_{i=1}^2(p_i-\tilde{p}_i,\tilde{g}_i)_{\Omega_i^t}\le Ch^{k+1}\left(\sum_{i=1}^2\left[\|\bs{u}_i\|_{k+1,\Omega_i^t}+\|p_i\|_{k,\Omega_i^t}\right]\right)\left(\sum_{i=1}^2\|\tilde{g}_i\|_{1,\Omega_i^t}\right).
\end{equation}
\end{lemma}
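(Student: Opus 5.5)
The plan is an Aubin--Nitsche duality argument in the spirit of the $L^2$ part of Lemma~\ref{lem:semi1}. This time the dual problem carries the datum $(\tilde g_1,\tilde g_2)$ in the \emph{divergence} equation rather than in the momentum equation.

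\emph{Step 1: reduce to mean-zero data.} By construction $\int_{\Omega_i^t}(p_i-\tilde p_i)\,d\bs{x}_i=0$ for $i=1,2$. Hence we may replace $\tilde g_i$ by $\tilde g_i-\bar g_i$, where $\bar g_i$ is the mean of $\tilde g_i$ over $\Omega_i^t$, without changing the left-hand side of \eqref{eq:semi2}. Moreover $\|\tilde g_i-\bar g_i\|_{1,\Omega_i^t}\le C\|\tilde g_i\|_{1,\Omega_i^t}$.

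\emph{Step 2: set up the dual problem.} Find $(\bs{v}_1,\bs{v}_2)\in\bs{U}^t$ and $(q_1,q_2)\in Q^t$ such that
\begin{equation*}
a^\star(\bs{v}_1,\bs{v}_2;\bs{\psi}_1,\bs{\psi}_2)-b(\bs{\psi}_1,\bs{\psi}_2;q_1,q_2)=0,\qquad b(\bs{v}_1,\bs{v}_2;\chi_1,\chi_2)=\sum_{i=1}^2(\tilde g_i-\bar g_i,\chi_i)_{\Omega_i^t},
\end{equation*}
for all $(\bs{\psi}_1,\bs{\psi}_2)\in\bs{U}^t$ and $(\chi_1,\chi_2)\in Q^t$. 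This is a stationary Stokes interface problem of the type \eqref{strong-adjoint}, with zero body force and prescribed divergence $\nabla\cdot\bs{v}_i=\tilde g_i-\bar g_i$ in $\Omega_i^t$.

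\emph{Step 3: check compatibility and assume regularity.} In $\bs{U}^t$ the velocity vanishes on $\partial\Omega_i^t\setminus\Gamma^t$ and is continuous across $\Gamma^t$, so $\int_{\Omega_i^t}\nabla\cdot\bs{v}_i$ reduces to $\pm\int_{\Gamma^t}\bs{v}\cdot\bs{n}_1$. Therefore we need data with zero mean in each subdomain separately, which Step 1 guarantees. Well-posedness then follows from the continuous inf-sup condition and Brezzi theory, as in Lemma~\ref{lem:semi1}. By analogy with \eqref{regadj1}, we assume the regularity bound
\begin{equation*}
\sum_{i=1}^2\left[\|\bs{v}_i\|_{2,\Omega_i^t}+\|q_i\|_{1,\Omega_i^t}\right]\le C\sum_{i=1}^2\|\tilde g_i\|_{1,\Omega_i^t}.
\end{equation*}
This is the step I expect to be the main obstacle. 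The bound is a regularity statement for the Stokes interface problem with nonhomogeneous divergence, and it must be justified, or explicitly adopted, as a reasonable assumption in the same way as \eqref{regadj1}.

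\emph{Step 4: the error identity.} Take arbitrary $(\bs{v}_{h,1},\bs{v}_{h,2})\in\bs{U}_h^t$ and $(q_{h,1},q_{h,2})\in Q_h^t$, and write $e_u$ for the velocity error and $e_p$ for the pressure error. Using \eqref{proj1}, then \eqref{propadj} with the dual equation for $\bs{\psi}=e_u$, then \eqref{proj2}, we get
\begin{align*}
\sum_{i=1}^2(p_i-\tilde p_i,\tilde g_i)_{\Omega_i^t}&=b(\bs{v}_1,\bs{v}_2;e_p)
=b(\bs{v}-\bs{v}_h;e_p)+a(e_u;\bs{v}_h)\\
&=b(\bs{v}-\bs{v}_h;e_p)-a(e_u;\bs{v}-\bs{v}_h)+a^\star(\bs{v};e_u)\\
&=b(\bs{v}-\bs{v}_h;e_p)-a(e_u;\bs{v}-\bs{v}_h)+b(e_u;q-q_h).
\end{align*}

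\emph{Step 5: conclude.} Apply the continuity bounds \eqref{a-cont}--\eqref{b-cont} to the three terms. The errors $e_u,e_p$ are bounded by $Ch^k\sum_{i=1}^2[\|\bs{u}_i\|_{k+1,\Omega_i^t}+\|p_i\|_{k,\Omega_i^t}]$ via \eqref{eq:semi1-errest1}. Taylor--Hood interpolation gives
\begin{equation*}
\inf_{(\bs{v}_{h,1},\bs{v}_{h,2})}\sum_{i=1}^2\|\bs{v}_i-\bs{v}_{h,i}\|_{1,\Omega_i^t}+\inf_{(q_{h,1},q_{h,2})}\sum_{i=1}^2\|q_i-q_{h,i}\|_{0,\Omega_i^t}\le Ch\sum_{i=1}^2\left[\|\bs{v}_i\|_{2,\Omega_i^t}+\|q_i\|_{1,\Omega_i^t}\right].
\end{equation*}
Combining these with the regularity bound of Step 3 yields \eqref{eq:semi2}.
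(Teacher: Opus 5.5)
Your proposal is correct and follows the paper's proof essentially line by line. It uses the same dual Stokes interface problem with $\tilde g_i-\bar{\tilde g}_i$ as divergence datum, the same assumed $H^2\times H^1$ regularity bound, and the same identity $b(\bs{v}-\bs{v}_h;e_p)-a(e_u;\bs{v}-\bs{v}_h)+b(e_u;q-q_h)$, closed with Lemma~\ref{lem:semi1} and interpolation.

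One small correction to your Step~3: the interface flux $\int_{\Gamma^t}\bs{v}\cdot\bs{n}_1$ is unconstrained in $\bs{U}^t$, so only $\sum_{i}\int_{\Omega_i^t}\nabla\cdot\bs{v}_i=0$ is forced. The compatibility condition is therefore global, as the paper states, and subdomain-wise zero mean is sufficient rather than necessary.
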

\begin{proof}
We introduce a new adjoint problem of \eqref{proj1}-\eqref{proj2} as follows: given $(\tilde{g}_1,\tilde{g}_2)\in H^1(\Omega_1^t)\times H^1(\Omega_2^t)$, find $(\bs{v}_1,\bs{v}_2)\in\bs{U}^t$ and $(q_1,q_2)\in Q^t$, such that
\begin{align}
    \label{projadj3}
    a^\star(\bs{v}_1,\bs{v}_2;\bs{\psi}_1,\bs{\psi}_2)-b(\bs{\psi}_1,\bs{\psi}_2;q_1,q_2)&=0,\qquad\forall\,(\bs{\psi}_1,\bs{\psi}_2)\in\bs{U}^t,\\
    \label{projadj4}
    b(\bs{v}_1,\bs{v}_2;\chi_1,\chi_2)&=\sum_{i=1}^2(\tilde{g}_i-\bar{\tilde{g}}_i,\chi_i)_{\Omega_i^t},\qquad\forall\,(\chi_1,\chi_2)\in Q^t,
\end{align}
where $\displaystyle\bar{\tilde{g}}_i=\frac{1}{|\Omega_i^t|}\int_{\Omega_i^t}\tilde{g}_i\,d\bs{x}$, which satisfies the compatibility condition
\begin{equation*}
\sum_{i=1}^2\int_{\Omega_i^t}(\tilde{g}_i-\bar{\tilde{g}}_i)\,d\bs{x}=\sum_{i=1}^2\int_{\Omega_i^t}(\nabla\cdot\bs{v}_i)\,d\bs{x}=\sum_{i=1}^2\int_{\partial\Omega_i^t\setminus\Gamma^t}\bs{v}_i\cdot\bs{n}_i\,d\bs{s}
+\int_{\Gamma^t}(\bs{v}_1-\bs{v}_2)\cdot\bs{n}_1d\bs{s}=0.
\end{equation*}
The equivalent strong form of \eqref{projadj3}-\eqref{projadj4} can
be defined as: find
$(\bs{v}_1,\bs{v}_2)\in(H^2(\Omega_1^t))^d\times(H^2(\Omega_2^t))^d$
and $(q_1,q_2)\in H^1(\Omega_1^t)\times H^1(\Omega_2^t)$ such that
\begin{equation}\label{strong-adjoint1}
    \left\{\begin{alignedat}{2}
        -\nabla\cdot(\mu_1\nabla\bs{v}_1)+(\bs{w}_{h,1}\cdot\nabla)\bs{v}_1+(\nabla\cdot\bs{w}_{h,1})\bs{v}_1+\kappa\bs{v}_1+\nabla q_1&=\bs{0},&&\text{in $\Omega_1^t$},\\
        \nabla\cdot\bs{v}_1&=\tilde{g}_1-\bar{\tilde{g}}_1,\qquad&&\text{in $\Omega_1^t$},\\
        \bs{v}_1&=\bs{0},&&\text{on $\partial\Omega_1^t\setminus\Gamma^t$},\\
        -\nabla\cdot(\mu_2\nabla\bs{v}_2)+(\bs{w}_{h,2}\cdot\nabla)\bs{v}_2+(\nabla\cdot\bs{w}_{h,2})\bs{v}_2+\kappa\bs{v}_2+\nabla q_2&=\bs{0},&&\text{in $\Omega_2^t$},\\
        \nabla\cdot\bs{v}_2&=\tilde{g}_2-\bar{\tilde{g}}_2,\qquad&&\text{in $\Omega_2^t$},\\
        \bs{v}_2&=\bs{0},&&\text{on $\partial\Omega_2^t\setminus\Gamma^t$},\\
        \bs{v}_1&=\bs{v}_2,&&\text{on $\Gamma^t$},\\
        (-q_1\bs{I}+\mu_1\nabla\bs{v}_1)\bs{n}_1+(-q_2\bs{I}+\mu_2\nabla\bs{v}_2)\bs{n}_2&=\bs{0},&&\text{on $\Gamma^t$}.
    \end{alignedat}\right.
\end{equation}
which also belongs to a class of stationary Stokes interface
problems, so that it is likewise reasonable to assume that the
following regularity properties hold for \eqref{strong-adjoint1}:
\begin{equation}
    \label{regadj2}
    \sum_{i=1}^2\left[\|\bs{v}_i\|_{2,\Omega_i^t}+\|q_i\|_{1,\Omega_i^t}\right]\le C\sum_{i=1}^2\|\tilde{g}_i-\bar{\tilde{g}}_i\|_{1,\Omega_i^t}\le C\sum_{i=1}^2\|\tilde{g}_i\|_{1,\Omega_i^t}.
\end{equation}

Let $\bs{\psi}_i=\bs{u}_i-\tilde{\bs{u}}_i$ and $\chi_i=p_i-\tilde{p}_i$ in \eqref{projadj3}-\eqref{projadj4}, take any $(\bs{v}_{h,1},\bs{v}_{h,2})\in\bs{U}_h^t$ and $(q_{h,1},q_{h,2})\in Q_h^t$, and apply the $H^1$-projection \eqref{proj1}-\eqref{proj2} together with the adjoint property \eqref{propadj}, the regularity assumption \eqref{regadj2}, Lemma \ref{lem:semi1} and standard interpolation error estimates, which yields
\begin{alignat*}{2}
    &\ \sum_{i=1}^2(p_i-\tilde{p}_i,\tilde{g}_i)_{\Omega_i^t}=\sum_{i=1}^2(p_i-\tilde{p}_i,\tilde{g}_i-\bar{\tilde{g}}_i)_{\Omega_i^t}=b(\bs{v}_1,\bs{v}_2;p_1-\tilde{p}_1,p_2-\tilde{p}_2)&\text{(using \eqref{projadj4})}\\
    =&\ b(\bs{v}_1-\bs{v}_{h,1},\bs{v}_2-\bs{v}_{h,2};p_1-\tilde{p}_1,p_2-\tilde{p}_2)+b(\bs{v}_{h,1},\bs{v}_{h,2};p_1-\tilde{p}_1,p_2-\tilde{p}_2)\\
    =&\ b(\bs{v}_1-\bs{v}_{h,1},\bs{v}_2-\bs{v}_{h,2};p_1-\tilde{p}_1,p_2-\tilde{p}_2)+a(\bs{u}_1-\tilde{\bs{u}}_1,\bs{u}_2-\tilde{\bs{u}}_2;\bs{v}_{h,1},\bs{v}_{h,2})&\text{(using \eqref{proj1})}\\
    =&\ b(\bs{v}_1-\bs{v}_{h,1},\bs{v}_2-\bs{v}_{h,2};p_1-\tilde{p}_1,p_2-\tilde{p}_2)+a(\bs{u}_1-\tilde{\bs{u}}_1,\bs{u}_2-\tilde{\bs{u}}_2;\bs{v}_{h,1}-\bs{v}_1,\bs{v}_{h,2}-\bs{v}_2)\\
    &+a^\star(\bs{v}_1,\bs{v}_2;\bs{u}_1-\tilde{\bs{u}}_1,\bs{u}_2-\tilde{\bs{u}}_2)&\text{(using \eqref{propadj})}\\
    =&\ b(\bs{v}_1-\bs{v}_{h,1},\bs{v}_2-\bs{v}_{h,2};p_1-\tilde{p}_1,p_2-\tilde{p}_2)+a(\bs{u}_1-\tilde{\bs{u}}_1,\bs{u}_2-\tilde{\bs{u}}_2;\bs{v}_{h,1}-\bs{v}_1,\bs{v}_{h,2}-\bs{v}_2)\\
    &+b(\bs{u}_1-\tilde{\bs{u}}_1,\bs{u}_2-\tilde{\bs{u}}_2;q_1-q_{h,1},q_2-q_{h,2})&\text{(using \eqref{projadj3} and \eqref{proj2})}\\
    \le&\ C\left(\sum_{i=1}^2\left[\|\bs{u}_i-\tilde{\bs{u}}_i\|_{1,\Omega_i^t}+\|p_i-\tilde{p}_i\|_{0,\Omega_i^t}\right]\right)\left(\sum_{i=1}^2\left[\|\bs{v}_i-\bs{v}_{h,i}\|_{1,\Omega_i^t}+\|q_i-q_{h,i}\|_{0,\Omega_i^t}\right]\right)\\
    \le&\ Ch^{k+1}\left(\sum_{i=1}^2\left[\|\bs{u}_i\|_{k+1,\Omega_i^t}+\|p_i\|_{k,\Omega_i^t}\right]\right)\left(\sum_{i=1}^2\left[\|\bs{v}_i\|_{2,\Omega_i^t}+\|q_i\|_{1,\Omega_i^t}\right]\right)\\
    \le&\ Ch^{k+1}\left(\sum_{i=1}^2\left[\|\bs{u}_i\|_{k+1,\Omega_i^t}+\|p_i\|_{k,\Omega_i^t}\right]\right)\left(\sum_{i=1}^2\|\tilde{g}_i\|_{1,\Omega_i^t}\right),&\text{(using \eqref{regadj2})}
\end{alignat*}
which leads to the error estimate \eqref{eq:semi2}.
\end{proof}

\begin{lemma}
\label{lem:semi3}
Under the same circumstance as Lemma \ref{lem:semi2}, for any $(\tilde{\bs{g}}_1,\tilde{\bs{g}}_2)\in(H^1(\Omega_1^t))^{d\times d}\times(H^1(\Omega_2^t))^{d\times d}$, the following error estimate holds
\begin{equation}
    \label{eq:semi3}
    \sum_{i=1}^2(\nabla(\bs{u}_i-\tilde{\bs{u}}_i),\tilde{\bs{g}}_i)_{\Omega_i^t}\le Ch^{k+1}\left(\sum_{i=1}^2\left[\|\bs{u}_i\|_{k+1,\Omega_i^t}+\|p_i\|_{k,\Omega_i^t}\right]\right)\left(\sum_{i=1}^2\|\tilde{\bs{g}}_i\|_{1,\Omega_i^t}\right).
\end{equation}
\end{lemma}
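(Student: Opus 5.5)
The plan is to repeat the Aubin--Nitsche argument of Lemmas \ref{lem:semi1} and \ref{lem:semi2}, with the data $(\tilde{\bs{g}}_1,\tilde{\bs{g}}_2)$ now entering the momentum equation of the dual problem in divergence form. I will deliberately avoid integrating $(\nabla(\bs{u}_i-\tilde{\bs{u}}_i),\tilde{\bs{g}}_i)_{\Omega_i^t}$ by parts. Doing so would create interface terms $\langle\bs{u}_i-\tilde{\bs{u}}_i,\tilde{\bs{g}}_i\bs{n}_i\rangle_{\Gamma^t}$, and we have no optimal trace estimates for these.

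\textbf{Dual problem.} Given $(\tilde{\bs{g}}_1,\tilde{\bs{g}}_2)$, find $(\bs{v}_1,\bs{v}_2)\in\bs{U}^t$ and $(q_1,q_2)\in Q^t$ such that
\begin{align*}
a^\star(\bs{v}_1,\bs{v}_2;\bs{\psi}_1,\bs{\psi}_2)-b(\bs{\psi}_1,\bs{\psi}_2;q_1,q_2)&=\sum_{i=1}^2(\tilde{\bs{g}}_i,\nabla\bs{\psi}_i)_{\Omega_i^t},\qquad\forall\,(\bs{\psi}_1,\bs{\psi}_2)\in\bs{U}^t,\\
b(\bs{v}_1,\bs{v}_2;\chi_1,\chi_2)&=0,\qquad\forall\,(\chi_1,\chi_2)\in Q^t.
\end{align*}
Well-posedness follows from the coercivity \eqref{a-coer} (transferred to $a^\star$ via \eqref{propadj}), continuity, the inf-sup condition and Brezzi's theory, exactly as before. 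The right-hand side is a bounded functional on $\bs{U}^t$.

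\textbf{Strong form and regularity.} Integrating by parts on each $\Omega_i^t$, the strong form is \eqref{strong-adjoint} with two changes. The volume source becomes $-\nabla\cdot\tilde{\bs{g}}_i\in(L^2(\Omega_i^t))^d$. The interface traction condition acquires the inhomogeneous jump $-(\tilde{\bs{g}}_1\bs{n}_1+\tilde{\bs{g}}_2\bs{n}_2)\in(H^{1/2}(\Gamma^t))^d$, which is controlled by $\sum_i\|\tilde{\bs{g}}_i\|_{1,\Omega_i^t}$ via the trace theorem. As with \eqref{regadj1} and \eqref{regadj2}, we then assume the stationary Stokes interface regularity
\[
\sum_{i=1}^2\left[\|\bs{v}_i\|_{2,\Omega_i^t}+\|q_i\|_{1,\Omega_i^t}\right]\le C\sum_{i=1}^2\|\tilde{\bs{g}}_i\|_{1,\Omega_i^t}.
\]
This step is the main obstacle. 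It requires $H^2\times H^1$ regularity for a Stokes interface problem with a nonzero $H^{1/2}$ traction jump on $\Gamma^t$, not only an $L^2$ source. I would justify it the same way the paper does for \eqref{regadj1}, namely as standard for smooth interfaces, by lifting the traction jump or by citing the Stokes interface literature.

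\textbf{Duality chain.} Take $\bs{\psi}_i=\bs{u}_i-\tilde{\bs{u}}_i$ and arbitrary $(\bs{v}_{h,1},\bs{v}_{h,2})\in\bs{U}_h^t$, $(q_{h,1},q_{h,2})\in Q_h^t$. By \eqref{propadj}, the left side of \eqref{eq:semi3} equals
\[
a(\bs{u}_1-\tilde{\bs{u}}_1,\bs{u}_2-\tilde{\bs{u}}_2;\bs{v}_1,\bs{v}_2)-b(\bs{u}_1-\tilde{\bs{u}}_1,\bs{u}_2-\tilde{\bs{u}}_2;q_1,q_2).
\]
Now subtract three quantities, each of which vanishes:
\begin{itemize}
\item[(i)] the discrete parts, using \eqref{proj1} with $(\bs{v}_{h,1},\bs{v}_{h,2})$, so that $a(\cdot;\bs{v}_{h})=b(\bs{v}_h;p-\tilde p)$;
\item[(ii)] the discrete parts, using \eqref{proj2} with $(q_{h,1},q_{h,2})$;
\item[(iii)] the term $b(\bs{v}_1,\bs{v}_2;p_1-\tilde p_1,p_2-\tilde p_2)$, using the divergence-free constraint of the dual problem.
\end{itemize}
After these subtractions, the expression becomes
\[
a(\bs{u}-\tilde{\bs{u}};\bs{v}-\bs{v}_h)-b(\bs{u}-\tilde{\bs{u}};q-q_h)-b(\bs{v}-\bs{v}_h;p-\tilde p),
\]
written here in abbreviated pair notation.

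\textbf{Conclusion.} By \eqref{a-cont} and \eqref{b-cont}, this is bounded by
\[
C\left(\sum_i\left[\|\bs{u}_i-\tilde{\bs{u}}_i\|_{1,\Omega_i^t}+\|p_i-\tilde p_i\|_{0,\Omega_i^t}\right]\right)\times\inf\left(\sum_i\left[\|\bs{v}_i-\bs{v}_{h,i}\|_{1,\Omega_i^t}+\|q_i-q_{h,i}\|_{0,\Omega_i^t}\right]\right).
\]
The first factor is $O(h^k)$ by \eqref{eq:semi1-errest1}. The infimum is bounded by $Ch\left(\|\bs{v}_i\|_2+\|q_i\|_1\right)$ through Taylor--Hood interpolation. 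Applying the assumed regularity bound then gives \eqref{eq:semi3}.
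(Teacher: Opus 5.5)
Your proposal is correct and is essentially the paper's proof. It uses the same dual problem with right-hand side $\sum_{i=1}^2(\tilde{\bs{g}}_i,\nabla\bs{\psi}_i)_{\Omega_i^t}$, the same assumed $H^2\times H^1$ Stokes-interface regularity bounded by $\sum_{i=1}^2\|\tilde{\bs{g}}_i\|_{1,\Omega_i^t}$ via the trace theorem, and the same three-term duality identity $a(\bs{u}-\tilde{\bs{u}};\bs{v}-\bs{v}_h)-b(\bs{u}-\tilde{\bs{u}};q-q_h)-b(\bs{v}-\bs{v}_h;p-\tilde p)$ bounded by $Ch^k\cdot Ch$. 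The only slip is a harmless sign: integration by parts gives $\sum_{i=1}^2(-q_i\bs{I}+\mu_i\nabla\bs{v}_i)\bs{n}_i=\tilde{\bs{g}}_1\bs{n}_1+\tilde{\bs{g}}_2\bs{n}_2$, so the traction data is $+(\tilde{\bs{g}}_1\bs{n}_1+\tilde{\bs{g}}_2\bs{n}_2)$, which does not affect the regularity bound.
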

\begin{proof}
We further introduce another adjoint problem of
\eqref{proj1}-\eqref{proj2} as follows: given
$(\tilde{\bs{g}}_1,\tilde{\bs{g}}_2)\in(H^1(\Omega_1^t))^{d\times
d}\times(H^1(\Omega_2^t))^{d\times d}$, find
$(\bs{v}_1,\bs{v}_2)\in\bs{U}^t$ and $(q_1,q_2)\in Q^t$ such that
\begin{align}
    \label{projadj5}
    a^\star(\bs{v}_1,\bs{v}_2;\bs{\psi}_1,\bs{\psi}_2)-b(\bs{\psi}_1,\bs{\psi}_2;q_1,q_2)&=\sum_{i=1}^2(\tilde{\bs{g}}_i,\nabla\bs{\psi}_i)_{\Omega_i^t},\qquad\forall\,(\bs{\psi}_1,\bs{\psi}_2)\in\bs{U}^t,\\
    \label{projadj6}
    b(\bs{v}_1,\bs{v}_2;\chi_1,\chi_2)&=0,\qquad\forall\,(\chi_1,\chi_2)\in Q^t.
\end{align}
The equivalent strong form of \eqref{projadj5}-\eqref{projadj6} can
be defined as: find
$(\bs{v}_1,\bs{v}_2)\in(H^2(\Omega_1^t))^d\times(H^2(\Omega_2^t))^d$
and $(q_1,q_2)\in H^1(\Omega_1^t)\times H^1(\Omega_2^t)$ such that
\begin{equation}\label{strong-adjoint2}
    \left\{\begin{alignedat}{2}
        -\nabla\cdot(\mu_1\nabla\bs{v}_1)+(\bs{w}_{h,1}\cdot\nabla)\bs{v}_1+(\nabla\cdot\bs{w}_{h,1})\bs{v}_1+\kappa\bs{v}_1+\nabla q_1&=-\nabla\cdot\tilde{\bs{g}}_1,&&\text{in $\Omega_1^t$},\\
        \nabla\cdot\bs{v}_1&=0,&&\text{in $\Omega_1^t$},\\
        \bs{v}_1&=\bs{0},&&\text{on $\partial\Omega_1^t\setminus\Gamma^t$},\\
        -\nabla\cdot(\mu_2\nabla\bs{v}_2)+(\bs{w}_{h,2}\cdot\nabla)\bs{v}_2+(\nabla\cdot\bs{w}_{h,2})\bs{v}_2+\kappa\bs{v}_2+\nabla q_2&=-\nabla\cdot\tilde{\bs{g}}_2,&&\text{in $\Omega_2^t$},\\
        \nabla\cdot\bs{v}_2&=0,&&\text{in $\Omega_2^t$},\\
        \bs{v}_2&=\bs{0},&&\text{on $\partial\Omega_2^t\setminus\Gamma^t$},\\
        \bs{v}_1&=\bs{v}_2,&&\text{on $\Gamma^t$},\\
        (-q_1\bs{I}+\mu_1\nabla\bs{v}_1)\bs{n}_1+(-q_2\bs{I}+\mu_2\nabla\bs{v}_2)\bs{n}_2&=\tilde{\bs{g}}_1\bs{n}_1+\tilde{\bs{g}}_2\bs{n}_2,\qquad&&\text{on $\Gamma^t$}.
    \end{alignedat}\right.
\end{equation}
which analogously belongs to a class of stationary Stokes interface
problems, and thus it is similarly reasonable to assume that the
following regularity properties hold for \eqref{strong-adjoint2}:
\begin{equation}
    \label{regadj3}
    \sum_{i=1}^2\left[\|\bs{v}_i\|_{2,\Omega_i^t}+\|q_i\|_{1,\Omega_i^t}\right]\le C\sum_{i=1}^2\left[\|\nabla\cdot\tilde{\bs{g}}_i\|_{0,\Omega_i^t}+\|\tilde{\bs{g}}_i\|_{\frac{1}{2},\Gamma^t}\right]\le C\sum_{i=1}^2\|\tilde{\bs{g}}_i\|_{1,\Omega_i^t}.
\end{equation}

Let $\bs{\psi}_i=\bs{u}_i-\tilde{\bs{u}}_i$ and $\chi_i=p_i-\tilde{p}_i$ in \eqref{projadj5}-\eqref{projadj6}, take any $(\bs{v}_{h,1},\bs{v}_{h,2})\in\bs{U}_h^t$ and $(q_{h,1},q_{h,2})\in Q_h^t$, and apply the $H^1$-projection \eqref{proj1}-\eqref{proj2} together with the adjoint property \eqref{propadj}, the regularity assumption \eqref{regadj3}, Lemma \ref{lem:semi1} and standard interpolation error estimates, which yields
\begin{alignat*}{2}
    &\ \sum_{i=1}^2(\nabla(\bs{u}_i-\tilde{\bs{u}}_i),\tilde{\bs{g}}_i)_{\Omega_i^t}=a^\star(\bs{v}_1,\bs{v}_2;\bs{u}_1-\tilde{\bs{u}}_1,\bs{u}_2-\tilde{\bs{u}}_2)-b(\bs{u}_1-\tilde{\bs{u}}_1,\bs{u}_2-\tilde{\bs{u}}_2;q_1,q_2)&\text{(using \eqref{projadj5})}\\
    =&\ a^\star(\bs{v}_1-\bs{v}_{h,1},\bs{v}_2-\bs{v}_{h,2};\bs{u}_1-\tilde{\bs{u}}_1,\bs{u}_2-\tilde{\bs{u}}_2)-b(\bs{u}_1-\tilde{\bs{u}}_1,\bs{u}_2-\tilde{\bs{u}}_2;q_1-q_{h,1},q_2-q_{h,2})\\
    &+a^\star(\bs{v}_{h,1},\bs{v}_{h,2};\bs{u}_1-\tilde{\bs{u}}_1,\bs{u}_2-\tilde{\bs{u}}_2)-b(\bs{u}_1-\tilde{\bs{u}}_1,\bs{u}_2-\tilde{\bs{u}}_2;q_{h,1},q_{h,2})\\
    =&\ a(\bs{u}_1-\tilde{\bs{u}}_1,\bs{u}_2-\tilde{\bs{u}}_2;\bs{v}_1-\bs{v}_{h,1},\bs{v}_2-\bs{v}_{h,2})-b(\bs{u}_1-\tilde{\bs{u}}_1,\bs{u}_2-\tilde{\bs{u}}_2;q_1-q_{h,1},q_2-q_{h,2})&\text{(using \eqref{propadj})}\\
    &+a(\bs{u}_1-\tilde{\bs{u}}_1,\bs{u}_2-\tilde{\bs{u}}_2;\bs{v}_{h,1},\bs{v}_{h,2})-b(\bs{u}_1-\tilde{\bs{u}}_1,\bs{u}_2-\tilde{\bs{u}}_2;q_{h,1},q_{h,2})\\
    &-b(\bs{v}_{h,1},\bs{v}_{h,2};p_1-\tilde{p}_1,p_2-\tilde{p}_2)+b(\bs{v}_1,\bs{v}_2;p_1-\tilde{p}_1,p_2-\tilde{p}_2)\\
    &-b(\bs{v}_1-\bs{v}_{h,1},\bs{v}_2-\bs{v}_{h,2};p_1-\tilde{p}_1,p_2-\tilde{p}_2)\\
    =&\ a(\bs{u}_1-\tilde{\bs{u}}_1,\bs{u}_2-\tilde{\bs{u}}_2;\bs{v}_1-\bs{v}_{h,1},\bs{v}_2-\bs{v}_{h,2})-b(\bs{u}_1-\tilde{\bs{u}}_1,\bs{u}_2-\tilde{\bs{u}}_2;q_1-q_{h,1},q_2-q_{h,2})\\
    &-b(\bs{v}_1-\bs{v}_{h,1},\bs{v}_2-\bs{v}_{h,2};p_1-\tilde{p}_1,p_2-\tilde{p}_2)&\hspace*{-3em}\text{(using \eqref{proj1}-\eqref{proj2} and \eqref{projadj6})}\\
    \le&\ C\left(\sum_{i=1}^2\left[\|\bs{u}_i-\tilde{\bs{u}}_i\|_{1,\Omega_i^t}+\|p_i-\tilde{p}_i\|_{0,\Omega_i^t}\right]\right)\left(\sum_{i=1}^2\left[\|\bs{v}_i-\bs{v}_{h,i}\|_{1,\Omega_i^t}+\|q_i-q_{h,i}\|_{0,\Omega_i^t}\right]\right)\\
    \le&\ Ch^{k+1}\left(\sum_{i=1}^2\left[\|\bs{u}_i\|_{k+1,\Omega_i^t}+\|p_i\|_{k,\Omega_i^t}\right]\right)\left(\sum_{i=1}^2\left[\|\bs{v}_i\|_{2,\Omega_i^t}+\|q_i\|_{1,\Omega_i^t}\right]\right)\\
    \le&\ Ch^{k+1}\left(\sum_{i=1}^2\left[\|\bs{u}_i\|_{k+1,\Omega_i^t}+\|p_i\|_{k,\Omega_i^t}\right]\right)\left(\sum_{i=1}^2\|\tilde{\bs{g}}_i\|_{1,\Omega_i^t}\right),&\text{(using \eqref{regadj3})}
\end{alignat*}
which finally gives the error estimate \eqref{eq:semi3}.
\end{proof}

Upon the above results, we are finally in a position to prove the following convergence result for the ALE-time derivative of the $H^1$-projection.

\begin{lemma}
\label{lem:semi4}
With the same condition of Lemma \ref{lem:semi1}, we have the following error estimates
\begin{equation}
    \label{eq:semi4}
    \begin{aligned}
        &\ \sum_{i=1}^2\left\|\left.\frac{\partial\bs{u}_i}{\partial t}\right|_{\hat{\bs{x}}}^h-\left.\frac{\partial\tilde{\bs{u}}_i}{\partial t}\right|_{\hat{\bs{x}}}^h\right\|_{(L^2(\Omega_i^t))^d}+h\sum_{i=1}^2\left[\left\|\left.\frac{\partial\bs{u}_i}{\partial t}\right|_{\hat{\bs{x}}}^h-\left.\frac{\partial\tilde{\bs{u}}_i}{\partial t}\right|_{\hat{\bs{x}}}^h\right\|_{(H^1(\Omega_i^t))^d}+\left\|\left.\frac{\partial p_i}{\partial t}\right|_{\hat{\bs{x}}}^h-\left.\frac{\partial\tilde{p}_i}{\partial t}\right|_{\hat{\bs{x}}}^h\right\|_{L^2(\Omega_i^t)}\right]\\
        \le&\ Ch^{k+1}\sum_{i=1}^2\left[\|\bs{u}_i\|_{(H^{k+1}(\Omega_i^t))^d}+\|\bs{u}_i\|_{(W^{2,\infty}(\Omega_i^t))^d}+\left\|\left.\frac{\partial\bs{u}_i}{\partial t}\right|_{\hat{\bs{x}}}\right\|_{(H^{k+1}(\Omega_i^t))^d}\right.\\
        &\qquad\qquad\quad\ +\left.\|p_i\|_{H^k(\Omega_i^t)}+\|p_i\|_{W^{1,\infty}(\Omega_i^t)}+\left\|\left.\frac{\partial p_i}{\partial t}\right|_{\hat{\bs{x}}}\right\|_{H^k(\Omega_i^t)}\right].
    \end{aligned}
\end{equation}
\end{lemma}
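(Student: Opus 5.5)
The plan is to differentiate the projection equations \eqref{proj1}-\eqref{proj2} in time along the discrete ALE flow. The resulting equations are then read as a perturbed $H^1$-projection problem for the discrete ALE-time derivatives of the errors.

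Set $\bs{e}_i=\bs{u}_i-\tilde{\bs{u}}_i$ and $\eta_i=p_i-\tilde p_i$. Take test functions $\bs{v}_{h,i}=\hat{\bs{v}}_{h,i}\circ(\bs{X}_{h,i}^t)^{-1}$ and $q_{h,i}=\hat q_{h,i}\circ(\bs{X}_{h,i}^t)^{-1}$ with time-independent reference functions. Their discrete ALE-time derivative vanishes, so applying $\frac{d}{dt}$ to \eqref{proj1}-\eqref{proj2} is legitimate. Lemma \ref{lem:semi0}, used with $\bs{X}_h^t$ and $\bs{w}_h$ in place of $\bs{X}^t,\bs{w}$, together with the Reynolds transport theorem, then gives for all test functions
\begin{align*}
a(\dot{\bs{e}}_1,\dot{\bs{e}}_2;\bs{v}_{h,1},\bs{v}_{h,2})-b(\bs{v}_{h,1},\bs{v}_{h,2};\dot\eta_1,\dot\eta_2)&=R_1(\bs{v}_{h}),\\
b(\dot{\bs{e}}_1,\dot{\bs{e}}_2;q_{h,1},q_{h,2})&=R_2(q_h).
\end{align*}
Here the dot denotes the discrete ALE derivative $\partial_t|^h_{\hat{\bs{x}}}$. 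The residual $R_1$ collects terms of the form
\[
(\nabla\bs{e}_i,\,\nabla\bs{v}_{h,i}\,\nabla\cdot\bs{w}_{h,i}-\nabla\bs{v}_{h,i}(\nabla\bs{w}_{h,i}+\nabla\bs{w}_{h,i}^{\mathrm T})),\qquad (\eta_i,\nabla\bs{w}_{h,i}:\nabla\bs{v}_{h,i}^{\mathrm T}-\nabla\cdot\bs{v}_{h,i}\nabla\cdot\bs{w}_{h,i}),
\]
plus convective and zero-order terms involving $\bs{e}_i$ and $\partial_t\bs{w}_{h,i}|^h_{\hat{\bs x}}$. The residual $R_2$ collects analogous terms in $\nabla\bs{e}_i\,\nabla\bs{w}_{h,i}$ and $\bs{e}_i$. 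I also need the mean condition $\int_{\Omega_i^t}\dot\eta_i\,d\bs{x}$. Differentiating $\int(\tilde p_i-p_i)\,d\bs{x}=0$ with Reynolds gives $\int\dot\eta_i=-\int\eta_i\nabla\cdot\bs{w}_{h,i}$, which is $O(h^{k+1})$ by Lemma \ref{lem:semi2} with $\tilde g_i=\nabla\cdot\bs{w}_{h,i}$.

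For the $H^1$ and pressure estimate, split $\dot{\bs{e}}_i=(\dot{\bs{u}}_i-\Pi_h\dot{\bs{u}}_i)+(\Pi_h\dot{\bs{u}}_i-\dot{\tilde{\bs{u}}}_i)$, where $\Pi_h$ is an interpolant, and do the same for the pressure. Note that $\partial_t\bs{u}|^h_{\hat{\bs x}}=\partial_t\bs{u}|_{\hat{\bs x}}+((\bs{w}_h-\bs{w})\cdot\nabla)\bs{u}$. By \eqref{w-conv1}-\eqref{w-conv2} its interpolation error is $O(h^k)$ in $H^1$, and this is where the $W^{2,\infty}$ norms of $\bs{u}_i$ and the $W^{1,\infty}$ norm of $p_i$ enter. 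The discrete part then satisfies a discrete Stokes-type system with coercivity \eqref{a-coer} and the discrete inf-sup condition. Since $R_1,R_2$ are bounded in dual norms by $C(\|\bs{e}\|_1+\|\eta\|_0)$, Lemma \ref{lem:semi1} gives the $O(h^k)$ bound in $H^1\times L^2$.

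For the $L^2$ estimate, the paper's strategy is to avoid an adjoint problem for $\dot{\bs{e}}$. I would use the adjoint problem \eqref{projadj1}-\eqref{projadj2} with $\tilde{\bs{f}}_i=\dot{\bs{e}}_i$ and repeat the Aubin--Nitsche computation of Lemma \ref{lem:semi1} with $(\bs{e},\eta)$ replaced by $(\dot{\bs{e}},\dot\eta)$. The consistency terms that previously vanished now produce $R_1(\bs{v}_h)+R_2(q_h)$, with $\bs{v}_h,q_h$ interpolants of the adjoint solution $(\bs{v},q)$; the $\dot\eta$ contributions are handled through the mean condition above. The remaining terms are products of $O(h^k)$ errors and $O(h)$ interpolation errors, hence $O(h^{k+1})\|\dot{\bs{e}}\|_0$.

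The main obstacle is bounding $R_1(\bs{v}_h)$ and $R_2(q_h)$ at order $h^{k+1}$. I would first replace $\bs{v}_h,q_h$ by $\bs{v},q$, which costs $O(h^k)\cdot O(h)$. Each resulting term is then a pairing of $\nabla\bs{e}_i$ or $\eta_i$ against a product such as $\nabla\bs{v}_i\nabla\bs{w}_{h,i}$ or $(\nabla\cdot\bs{w}_{h,i})\nabla\cdot\bs{v}_i$. These products lie in $H^1(\Omega_i^t)$ with norm $\le C(\|\bs{v}\|_2+\|q\|_1)$, using \eqref{w-bound} and the elementwise $W^{2,\infty}$ bound on $\bs{w}_{h,i}$ from \eqref{w-conv2}. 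Lemmas \ref{lem:semi3} and \ref{lem:semi2} then give $O(h^{k+1})$ times $\|\tilde{\bs{f}}\|_0$ by \eqref{regadj1}. The zero-order terms in $\bs{e}_i$ are handled directly by the $L^2$ bound of Lemma \ref{lem:semi1}.

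A point needing care is that $\bs{w}_{h,i}$ is only piecewise smooth on the moving mesh. I would therefore replace $\bs{w}_{h,i}$ by $\bs{w}_i$ in these products, paying $h\|\bs{w}-\bs{w}_h\|_{W^{1,\infty}}$-type errors, so that the $H^1$ regularity required by Lemmas \ref{lem:semi2}-\ref{lem:semi3} genuinely holds.
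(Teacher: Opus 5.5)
Your proposal is correct and follows essentially the same route as the paper. You differentiate \eqref{proj1}--\eqref{proj2} along the discrete ALE flow using Lemma \ref{lem:semi0}, and get the $O(h^k)$ bound from an interpolant splitting with coercivity, the discrete inf-sup condition and control of the subdomain means. You then derive the $L^2$ bound from the original adjoint problem \eqref{projadj1}--\eqref{projadj2} rather than one for the time derivative. The residuals are handled by writing $\bs{v}_{h,i}=(\bs{v}_{h,i}-\bs{v}_i)+\bs{v}_i$ and $\bs{w}_{h,i}=(\bs{w}_{h,i}-\bs{w}_i)+\bs{w}_i$ and invoking Lemmas \ref{lem:semi2}--\ref{lem:semi3}.

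The first small difference is your $O(h^{k+1})$ bound on the mean of the pressure-derivative error via Lemma \ref{lem:semi2}. The paper only needs $O(h^k)$ there, since in the duality step that mean is paired with $\nabla\cdot(\bs{v}_{h,i}-\bs{v}_i)$. Your version also requires the $\bs{w}_{h,i}\to\bs{w}_i$ swap, because $\nabla\cdot\bs{w}_{h,i}\notin H^1$.

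The second concerns that swap itself. Use an $L^3$--$L^6$ H\"older pairing with \eqref{w-conv1}, as in \eqref{example_estimate}, rather than the $W^{1,\infty}$ bound \eqref{w-conv2}, which would leave a spurious factor $|\ln h|$.
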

\begin{proof}
Differentiate the $H^1$-projection equation \eqref{proj1}-\eqref{proj2} with respect to time $t$, and carefully apply the Reynolds transport theorem and Lemma \ref{lem:semi0} to each term, yield
\begin{align*}
    \frac{d}{dt}&(\mu_i\nabla(\bs{u}_i-\tilde{\bs{u}}_i),\nabla\bs{v}_{h,i})_{\Omega_i^t}=\left(\mu_i\nabla\left(\left.\frac{\partial\bs{u}_i}{\partial t}\right|_{\hat{\bs{x}}}^h-\left.\frac{\partial\tilde{\bs{u}}_i}{\partial t}\right|_{\hat{\bs{x}}}^h\right),\nabla\bs{v}_{h,i}\right)_{\Omega_i^t}\\
    &\quad+(\mu_i(\nabla\cdot\bs{w}_{h,i})\nabla(\bs{u}_i-\tilde{\bs{u}}_i),\nabla\bs{v}_{h,i})_{\Omega_i^t}-\left(\mu_i\nabla(\bs{u}_i-\tilde{\bs{u}}_i)\left(\nabla\bs{w}_{h,i}+\nabla\bs{w}_{h,i}^\mathrm{T}\right),\nabla\bs{v}_{h,i}\right)_{\Omega_i^t},\\
    -\frac{d}{dt}&((\bs{w}_{h,i}\cdot\nabla)(\bs{u}_i-\tilde{\bs{u}}_i),\bs{v}_{h,i})_{\Omega_i^t}=-\left((\bs{w}_{h,i}\cdot\nabla)\left(\left.\frac{\partial\bs{u}_i}{\partial t}\right|_{\hat{\bs{x}}}^h-\left.\frac{\partial\tilde{\bs{u}}_i}{\partial t}\right|_{\hat{\bs{x}}}^h\right),\bs{v}_{h,i}\right)_{\Omega_i^t}\\
    &\quad-\left(\left(\left.\frac{\partial\bs{w}_{h,i}}{\partial t}\right|_{\hat{\bs{x}}}^h\cdot\nabla\right)(\bs{u}_i-\tilde{\bs{u}}_i),\bs{v}_{h,i}\right)_{\Omega_i^t}-((\nabla\cdot\bs{w}_{h,i})(\bs{w}_{h,i}\cdot\nabla)(\bs{u}_i-\tilde{\bs{u}}_i),\bs{v}_{h,i})_{\Omega_i^t}\\
    &\quad+\left(\left(\bs{w}_{h,i}\cdot\left(\nabla\bs{w}_{h,i}^\mathrm{T}\nabla\right)\right)(\bs{u}_i-\tilde{\bs{u}}_i),\bs{v}_{h,i}\right)_{\Omega_i^t},\\
    \frac{d}{dt}&\kappa(\bs{u}_i-\tilde{\bs{u}}_i,\bs{v}_{h,i})_{\Omega_i^t}=\kappa\left(\left.\frac{\partial\bs{u}_i}{\partial t}\right|_{\hat{\bs{x}}}^h-\left.\frac{\partial\tilde{\bs{u}}_i}{\partial t}\right|_{\hat{\bs{x}}}^h,\bs{v}_{h,i}\right)_{\Omega_i^t}+\kappa((\nabla\cdot\bs{w}_{h,i})(\bs{u}_i-\tilde{\bs{u}}_i),\bs{v}_{h,i})_{\Omega_i^t},\\
    -\frac{d}{dt}&(p_i-\tilde{p}_i,\nabla\cdot\bs{v}_{h,i})_{\Omega_i^t}=-\left(\left.\frac{\partial p_i}{\partial t}\right|_{\hat{\bs{x}}}^h-\left.\frac{\partial\tilde{p}_i}{\partial t}\right|_{\hat{\bs{x}}}^h,\nabla\cdot\bs{v}_{h,i}\right)_{\Omega_i^t}-((\nabla\cdot\bs{w}_{h,i})(p_i-\tilde{p}_i),\nabla\cdot\bs{v}_{h,i})_{\Omega_i^t}\\
    &\quad+\left(\nabla\bs{w}_{h,i}(p_i-\tilde{p}_i),\nabla\bs{v}_{h,i}^\mathrm{T}\right)_{\Omega_i^t},\\
    \frac{d}{dt}&(\nabla\cdot(\bs{u}_i-\tilde{\bs{u}}_i),q_{h,i})_{\Omega_i^t}=\left(\nabla\cdot\left(\left.\frac{\partial\bs{u}_i}{\partial t}\right|_{\hat{\bs{x}}}^h-\left.\frac{\partial\tilde{\bs{u}}_i}{\partial t}\right|_{\hat{\bs{x}}}^h\right),q_{h,i}\right)_{\Omega_i^t}+(\nabla\cdot(\bs{u}_i-\tilde{\bs{u}}_i),(\nabla\cdot\bs{w}_{h,i})q_{h,i})_{\Omega_i^t}\\
    &\quad-\left(\nabla(\bs{u}_i-\tilde{\bs{u}}_i)^\mathrm{T},\nabla\bs{w}_{h,i}q_{h,i}\right)_{\Omega_i^t}.
\end{align*}
Let
$\displaystyle\lambda_{h,i}=\frac{1}{|\Omega_i^t|}\int_{\Omega_i^t}\left.\frac{\partial
p_i}{\partial t}\right|_{\hat{\bs{x}}}^hd\bs{x}$,
$\displaystyle\tilde{\lambda}_{h,i}=\frac{1}{|\Omega_i^t|}\int_{\Omega_i^t}\left.\frac{\partial\tilde{p}_i}{\partial
t}\right|_{\hat{\bs{x}}}^hd\bs{x}$, $i=1,2$. We thereby have
\begin{align}
    \label{eq:semi4-erreq1}
    &\begin{aligned}
        &a\left(\left.\frac{\partial\bs{u}_1}{\partial t}\right|_{\hat{\bs{x}}}^h-\left.\frac{\partial\tilde{\bs{u}}_1}{\partial t}\right|_{\hat{\bs{x}}}^h,\left.\frac{\partial\bs{u}_2}{\partial t}\right|_{\hat{\bs{x}}}^h-\left.\frac{\partial\tilde{\bs{u}}_2}{\partial t}\right|_{\hat{\bs{x}}}^h;\bs{v}_{h,1},\bs{v}_{h,2}\right)\\
        &-b\left(\bs{v}_{h,1},\bs{v}_{h,2};\left(\left.\frac{\partial p_1}{\partial t}\right|_{\hat{\bs{x}}}^h-\lambda_{h,1}\right)-\left(\left.\frac{\partial\tilde{p}_1}{\partial t}\right|_{\hat{\bs{x}}}^h-\tilde{\lambda}_{h,1}\right),\left(\left.\frac{\partial p_2}{\partial t}\right|_{\hat{\bs{x}}}^h-\lambda_{h,2}\right)-\left(\left.\frac{\partial\tilde{p}_2}{\partial t}\right|_{\hat{\bs{x}}}^h-\tilde{\lambda}_{h,2}\right)\right)\\
        &\qquad=\sum_{i=1}^2\Bigg[-(\mu_i(\nabla\cdot\bs{w}_{h,i})\nabla(\bs{u}_i-\tilde{\bs{u}}_i),\nabla\bs{v}_{h,i})_{\Omega_i^t}+\left(\mu_i\nabla(\bs{u}_i-\tilde{\bs{u}}_i)\left(\nabla\bs{w}_{h,i}+\nabla\bs{w}_{h,i}^\mathrm{T}\right),\nabla\bs{v}_{h,i}\right)_{\Omega_i^t}\\
        &\qquad\qquad+\left(\left(\left.\frac{\partial\bs{w}_{h,i}}{\partial t}\right|_{\hat{\bs{x}}}^h\cdot\nabla\right)(\bs{u}_i-\tilde{\bs{u}}_i),\bs{v}_{h,i}\right)_{\Omega_i^t}+((\nabla\cdot\bs{w}_{h,i})(\bs{w}_{h,i}\cdot\nabla)(\bs{u}_i-\tilde{\bs{u}}_i),\bs{v}_{h,i})_{\Omega_i^t}\\
        &\qquad\qquad-\left(\left(\bs{w}_{h,i}\cdot\left(\nabla\bs{w}_{h,i}^\mathrm{T}\nabla\right)\right)(\bs{u}_i-\tilde{\bs{u}}_i),\bs{v}_{h,i}\right)_{\Omega_i^t}-\kappa((\nabla\cdot\bs{w}_{h,i})(\bs{u}_i-\tilde{\bs{u}}_i),\bs{v}_{h,i})_{\Omega_i^t}\\
        &\qquad\qquad+((\nabla\cdot\bs{w}_{h,i})(p_i-\tilde{p}_i),\nabla\cdot\bs{v}_{h,i})_{\Omega_i^t}-\left(\nabla\bs{w}_{h,i}(p_i-\tilde{p}_i),\nabla\bs{v}_{h,i}^\mathrm{T}\right)_{\Omega_i^t}+(\lambda_{h,i}-\tilde{\lambda}_{h,i},\nabla\cdot\bs{v}_{h,i})_{\Omega_i^t}\Bigg],
    \end{aligned}\\
    \label{eq:semi4-erreq2}
    &\begin{aligned}
        &b\left(\left.\frac{\partial\bs{u}_1}{\partial t}\right|_{\hat{\bs{x}}}^h-\left.\frac{\partial\tilde{\bs{u}}_1}{\partial t}\right|_{\hat{\bs{x}}}^h,\left.\frac{\partial\bs{u}_2}{\partial t}\right|_{\hat{\bs{x}}}^h-\left.\frac{\partial\tilde{\bs{u}}_2}{\partial t}\right|_{\hat{\bs{x}}}^h;q_{h,1},q_{h,2}\right)=\sum_{i=1}^2\Bigg[-(\nabla\cdot(\bs{u}_i-\tilde{\bs{u}}_i),(\nabla\cdot\bs{w}_{h,i})q_{h,i})_{\Omega_i^t}\\
        &\qquad\qquad+\left(\nabla(\bs{u}_i-\tilde{\bs{u}}_i)^\mathrm{T},\nabla\bs{w}_{h,i}q_{h,i}\right)_{\Omega_i^t}\Bigg].
    \end{aligned}
\end{align}
Introducing new variables
$\displaystyle\eta_i=\left.\frac{\partial\bs{u}_i}{\partial
t}\right|_{\hat{\bs{x}}}^h-I_h\left.\frac{\partial\bs{u}_i}{\partial
t}\right|_{\hat{\bs{x}}}^h$,
$\displaystyle\xi_i=I_h\left.\frac{\partial\bs{u}_i}{\partial
t}\right|_{\hat{\bs{x}}}^h-\left.\frac{\partial\tilde{\bs{u}}_i}{\partial
t}\right|_{\hat{\bs{x}}}^h$,
$\displaystyle\delta_i=\left(\left.\frac{\partial p_i}{\partial
t}\right|_{\hat{\bs{x}}}^h-\lambda_{h,i}\right)-I_h\left(\left.\frac{\partial
p_i}{\partial t}\right|_{\hat{\bs{x}}}^h-\lambda_{h,i}\right)$,
$\displaystyle\phi_i=I_h\left(\left.\frac{\partial p_i}{\partial
t}\right|_{\hat{\bs{x}}}^h-\lambda_{h,i}\right)-\left(\left.\frac{\partial\tilde{p}_i}{\partial
t}\right|_{\hat{\bs{x}}}^h-\tilde{\lambda}_{h,i}\right)$, where
$I_h$ denotes the standard Taylor-Hood finite element interpolation
operator acting on the corresponding velocity and pressure spaces,
we can then reformulate
\eqref{eq:semi4-erreq1}-\eqref{eq:semi4-erreq2} as follows:
\begin{align}
    \label{eq:semi4-erreq3}
    &\begin{aligned}
        &a(\eta_1+\xi_1,\eta_2+\xi_2;\bs{v}_{h,1},\bs{v}_{h,2})-b(\bs{v}_{h,1},\bs{v}_{h,2};\delta_1+\phi_1,\delta_2+\phi_2)\\
        &\qquad=\sum_{i=1}^2\Bigg[-(\mu_i(\nabla\cdot\bs{w}_{h,i})\nabla(\bs{u}_i-\tilde{\bs{u}}_i),\nabla\bs{v}_{h,i})_{\Omega_i^t}+\left(\mu_i\nabla(\bs{u}_i-\tilde{\bs{u}}_i)\left(\nabla\bs{w}_{h,i}+\nabla\bs{w}_{h,i}^\mathrm{T}\right),\nabla\bs{v}_{h,i}\right)_{\Omega_i^t}\\
        &\qquad\qquad+\left(\left(\left.\frac{\partial\bs{w}_{h,i}}{\partial t}\right|_{\hat{\bs{x}}}^h\cdot\nabla\right)(\bs{u}_i-\tilde{\bs{u}}_i),\bs{v}_{h,i}\right)_{\Omega_i^t}+((\nabla\cdot\bs{w}_{h,i})(\bs{w}_{h,i}\cdot\nabla)(\bs{u}_i-\tilde{\bs{u}}_i),\bs{v}_{h,i})_{\Omega_i^t}\\
        &\qquad\qquad-\left(\left(\bs{w}_{h,i}\cdot\left(\nabla\bs{w}_{h,i}^\mathrm{T}\nabla\right)\right)(\bs{u}_i-\tilde{\bs{u}}_i),\bs{v}_{h,i}\right)_{\Omega_i^t}-\kappa((\nabla\cdot\bs{w}_{h,i})(\bs{u}_i-\tilde{\bs{u}}_i),\bs{v}_{h,i})_{\Omega_i^t}\\
        &\qquad\qquad+((\nabla\cdot\bs{w}_{h,i})(p_i-\tilde{p}_i),\nabla\cdot\bs{v}_{h,i})_{\Omega_i^t}-\left(\nabla\bs{w}_{h,i}(p_i-\tilde{p}_i),\nabla\bs{v}_{h,i}^\mathrm{T}\right)_{\Omega_i^t}+(\lambda_{h,i}-\tilde{\lambda}_{h,i},\nabla\cdot\bs{v}_{h,i})_{\Omega_i^t}\Bigg],
    \end{aligned}\\
    \label{eq:semi4-erreq4}
    &b(\eta_1+\xi_1,\eta_2+\xi_2;q_{h,1},q_{h,2})=\sum_{i=1}^2\Bigg[-(\nabla\cdot(\bs{u}_i-\tilde{\bs{u}}_i),(\nabla\cdot\bs{w}_{h,i})q_{h,i})_{\Omega_i^t}+\left(\nabla(\bs{u}_i-\tilde{\bs{u}}_i)^\mathrm{T},\nabla\bs{w}_{h,i}q_{h,i}\right)_{\Omega_i^t}\Bigg].
\end{align}

Let $\bs{v}_{h,i}=\xi_i$ and $q_{h,i}=\phi_i$ in
\eqref{eq:semi4-erreq3}-\eqref{eq:semi4-erreq4}, yields
\begin{align*}
    &a(\xi_1,\xi_2;\xi_1,\xi_2)=-a(\eta_1,\eta_2;\xi_1,\xi_2)+b(\xi_1,\xi_2;\delta_1,\delta_2)-b(\eta_1,\eta_2;\phi_1,\phi_2)\\
    &\qquad+\sum_{i=1}^2\Bigg[-(\mu_i(\nabla\cdot\bs{w}_{h,i})\nabla(\bs{u}_i-\tilde{\bs{u}}_i),\nabla\xi_i)_{\Omega_i^t}+\left(\mu_i\nabla(\bs{u}_i-\tilde{\bs{u}}_i)\left(\nabla\bs{w}_{h,i}+\nabla\bs{w}_{h,i}^\mathrm{T}\right),\nabla\xi_i\right)_{\Omega_i^t}\\
    &\qquad\qquad+\left(\left(\left.\frac{\partial\bs{w}_{h,i}}{\partial t}\right|_{\hat{\bs{x}}}^h\cdot\nabla\right)(\bs{u}_i-\tilde{\bs{u}}_i),\xi_i\right)_{\Omega_i^t}+((\nabla\cdot\bs{w}_{h,i})(\bs{w}_{h,i}\cdot\nabla)(\bs{u}_i-\tilde{\bs{u}}_i),\xi_i)_{\Omega_i^t}\\
    &\qquad\qquad-\left(\left(\bs{w}_{h,i}\cdot\left(\nabla\bs{w}_{h,i}^\mathrm{T}\nabla\right)\right)(\bs{u}_i-\tilde{\bs{u}}_i),\xi_i\right)_{\Omega_i^t}-\kappa((\nabla\cdot\bs{w}_{h,i})(\bs{u}_i-\tilde{\bs{u}}_i),\xi_i)_{\Omega_i^t}\\
    &\qquad\qquad+((\nabla\cdot\bs{w}_{h,i})(p_i-\tilde{p}_i),\nabla\cdot\xi_i)_{\Omega_i^t}-\left(\nabla\bs{w}_{h,i}(p_i-\tilde{p}_i),\nabla\xi_i^\mathrm{T}\right)_{\Omega_i^t}+(\lambda_{h,i}-\tilde{\lambda}_{h,i},\nabla\cdot\xi_i)_{\Omega_i^t}\\
    &\qquad\qquad-(\nabla\cdot(\bs{u}_i-\tilde{\bs{u}}_i),(\nabla\cdot\bs{w}_{h,i})\phi_i)_{\Omega_i^t}+\left(\nabla(\bs{u}_i-\tilde{\bs{u}}_i)^\mathrm{T},\nabla\bs{w}_{h,i}\phi_i\right)_{\Omega_i^t}\Bigg].
\end{align*}
Using \eqref{a-cont}-\eqref{a-coer} and the Cauchy-Schwarz inequality, we obtain
\begin{equation}
    \label{eq:semi4-errest1}
    \begin{aligned}
        \sum_{i=1}^2\|\xi_i\|_{1,\Omega_i^t}^2\le&\ C\sum_{i=1}^2\Bigg[\|\eta_i\|_{1,\Omega_i^t}\|\xi_i\|_{1,\Omega_i^t}+\|\delta_i\|_{0,\Omega_i^t}\|\xi_i\|_{1,\Omega_i^t}+\|\eta_i\|_{1,\Omega_i^t}\|\phi_i\|_{0,\Omega_i^t}\\
        &\qquad\quad+\|\bs{u}_i-\tilde{\bs{u}}_i\|_{1,\Omega_i^t}\|\xi_i\|_{1,\Omega_i^t}+\|p_i-\tilde{p}_i\|_{0,\Omega_i^t}\|\xi_i\|_{1,\Omega_i^t}\\
        &\qquad\quad+|\lambda_{h,i}-\tilde{\lambda}_{h,i}|\|\xi_i\|_{1,\Omega_i^t}+\|\bs{u}_i-\tilde{\bs{u}}_i\|_{1,\Omega_i^t}\|\phi_i\|_{0,\Omega_i^t}\Bigg].
    \end{aligned}
\end{equation}
We first proceed to estimate $\|\eta_i\|_{1,\Omega_i^t}$. Using the triangle inequality, the Cauchy-Schwarz inequality and the identity $\displaystyle\left.\frac{\partial\bs{u}_i}{\partial t}\right|_{\hat{\bs{x}}}^h=\left.\frac{\partial\bs{u}_i}{\partial t}\right|_{\hat{\bs{x}}}+((\bs{w}_{h,i}-\bs{w}_i)\cdot\nabla)\bs{u}_i$, we obtain
\begin{equation}
    \label{eq:semi4-errest2}
    \begin{aligned}
        &\ \|\eta_i\|_{1,\Omega_i^t}=\left\|\left.\frac{\partial\bs{u}_i}{\partial t}\right|_{\hat{\bs{x}}}^h-I_h\left.\frac{\partial\bs{u}_i}{\partial t}\right|_{\hat{\bs{x}}}^h\right\|_{1,\Omega_i^t}\\
        \le&\ \left\|\left.\frac{\partial\bs{u}_i}{\partial t}\right|_{\hat{\bs{x}}}^h-\left.\frac{\partial\bs{u}_i}{\partial t}\right|_{\hat{\bs{x}}}\right\|_{1,\Omega_i^t}+\left\|\left.\frac{\partial\bs{u}_i}{\partial t}\right|_{\hat{\bs{x}}}-I_h\left.\frac{\partial\bs{u}_i}{\partial t}\right|_{\hat{\bs{x}}}\right\|_{1,\Omega_i^t}+\left\|I_h\left.\frac{\partial\bs{u}_i}{\partial t}\right|_{\hat{\bs{x}}}-I_h\left.\frac{\partial\bs{u}_i}{\partial t}\right|_{\hat{\bs{x}}}^h\right\|_{1,\Omega_i^t}\\
        \le&\ C\|\bs{w}_i-\bs{w}_{h,i}\|_{(H^1(\Omega_i^t))^d}\|\bs{u}_i\|_{(W^{2,\infty}(\Omega_i^t))^d}+Ch^k\left\|\left.\frac{\partial\bs{u}_i}{\partial t}\right|_{\hat{\bs{x}}}\right\|_{(H^{k+1}(\Omega_i^t))^d}\\
        \le&\ Ch^k\left(\|\bs{u}_i\|_{(W^{2,\infty}(\Omega_i^t))^d}+\left\|\left.\frac{\partial\bs{u}_i}{\partial t}\right|_{\hat{\bs{x}}}\right\|_{(H^{k+1}(\Omega_i^t))^d}\right).
    \end{aligned}
\end{equation}
We next consider the estimate of $\|\delta_i\|_{0,\Omega_i^t}$. Similarly, using the triangle inequality, the Cauchy-Schwarz inequality and the identity $\displaystyle\left.\frac{\partial p_i}{\partial t}\right|_{\hat{\bs{x}}}^h=\left.\frac{\partial p_i}{\partial t}\right|_{\hat{\bs{x}}}+((\bs{w}_{h,i}-\bs{w}_i)\cdot\nabla)p_i$, we obtain
\begin{equation}
    \label{eq:semi4-errest3}
    \begin{aligned}
        &\ \|\delta_i\|_{0,\Omega_i^t}=\left\|\left(\left.\frac{\partial p_i}{\partial t}\right|_{\hat{\bs{x}}}^h-\lambda_{h,i}\right)-I_h\left(\left.\frac{\partial p_i}{\partial t}\right|_{\hat{\bs{x}}}^h-\lambda_{h,i}\right)\right\|_{0,\Omega_i^t}=\left\|\left.\frac{\partial p_i}{\partial t}\right|_{\hat{\bs{x}}}^h-I_h\left.\frac{\partial p_i}{\partial t}\right|_{\hat{\bs{x}}}^h\right\|_{0,\Omega_i^t}\\
        \le&\ \left\|\left.\frac{\partial p_i}{\partial t}\right|_{\hat{\bs{x}}}^h-\left.\frac{\partial p_i}{\partial t}\right|_{\hat{\bs{x}}}\right\|_{0,\Omega_i^t}+\left\|\left.\frac{\partial p_i}{\partial t}\right|_{\hat{\bs{x}}}-I_h\left.\frac{\partial p_i}{\partial t}\right|_{\hat{\bs{x}}}\right\|_{0,\Omega_i^t}+\left\|I_h\left.\frac{\partial p_i}{\partial t}\right|_{\hat{\bs{x}}}-I_h\left.\frac{\partial p_i}{\partial t}\right|_{\hat{\bs{x}}}^h\right\|_{0,\Omega_i^t}\\
        \le&\ C\|\bs{w}_i-\bs{w}_{h,i}\|_{(L^2(\Omega_i^t))^d}\|p_i\|_{W^{1,\infty}(\Omega_i^t)}+Ch^k\left\|\left.\frac{\partial p_i}{\partial t}\right|_{\hat{\bs{x}}}\right\|_{H^k(\Omega_i^t)}\\
        \le&\ Ch^k\left(\|p_i\|_{W^{1,\infty}(\Omega_i^t)}+\left\|\left.\frac{\partial p_i}{\partial t}\right|_{\hat{\bs{x}}}\right\|_{H^k(\Omega_i^t)}\right).
    \end{aligned}
\end{equation}
We then estimate $|\lambda_{h,i}-\tilde{\lambda}_{h,i}|$. Since
$\int_{\Omega_i^t}(p_i-\tilde p_i)d\bs{x}_i=0$ for $i=1,2$, an
application of the Reynolds transport theorem yields
\begin{equation}
    \label{eq:semi4-errest4}
    \begin{aligned}
        &\ |\lambda_{h,i}-\tilde{\lambda}_{h,i}|=\frac{1}{|\Omega_i^t|}\left|\int_{\Omega_i^t}(\nabla\cdot\bs{w}_{h,i})(p_i-\tilde{p}_i)\,d\bs{x}\right|\\
        \le&\ C\|\bs{w}_{h,i}\|_{(W^{1,\infty}(\Omega_i^t))^d}\|p_i-\tilde{p}_i\|_{0,\Omega_i^t}\le C\|p_i-\tilde{p}_i\|_{0,\Omega_i^t}.
    \end{aligned}
\end{equation}

To estimate $\|\phi_i\|_{0,\Omega_i^t}$, we use the discrete inf-sup condition and \eqref{eq:semi4-erreq3}, resulting in
\begin{equation}
    \label{eq:semi4-errest5}
    \begin{aligned}
        &\ \gamma\|(\phi_1,\phi_2)\|_0\le\sup_{(\bs{v}_{h,1},\bs{v}_{h,2})\in\bs{U}_h^t}\frac{b(\bs{v}_{h,1},\bs{v}_{h,2};\phi_1,\phi_2)}{\|(\bs{v}_{h,1},\bs{v}_{h,2})\|_1}\\
        =&\ \sup_{(\bs{v}_{h,1},\bs{v}_{h,2})\in\bs{U}_h^t}\frac{1}{\|(\bs{v}_{h,1},\bs{v}_{h,2})\|_1}\Bigg\{a(\xi_1,\xi_2;\bs{v}_{h,1},\bs{v}_{h,2})+a(\eta_1,\eta_2;\bs{v}_{h,1},\bs{v}_{h,2})-b(\bs{v}_{h,1},\bs{v}_{h,2};\delta_1,\delta_2)\\
        &\qquad-\sum_{i=1}^2\Bigg[-(\mu_i(\nabla\cdot\bs{w}_{h,i})\nabla(\bs{u}_i-\tilde{\bs{u}}_i),\nabla\bs{v}_{h,i})_{\Omega_i^t}+\left(\mu_i\nabla(\bs{u}_i-\tilde{\bs{u}}_i)\left(\nabla\bs{w}_{h,i}+\nabla\bs{w}_{h,i}^\mathrm{T}\right),\nabla\bs{v}_{h,i}\right)_{\Omega_i^t}\\
        &\qquad\qquad+\left(\left(\left.\frac{\partial\bs{w}_{h,i}}{\partial t}\right|_{\hat{\bs{x}}}^h\cdot\nabla\right)(\bs{u}_i-\tilde{\bs{u}}_i),\bs{v}_{h,i}\right)_{\Omega_i^t}+((\nabla\cdot\bs{w}_{h,i})(\bs{w}_{h,i}\cdot\nabla)(\bs{u}_i-\tilde{\bs{u}}_i),\bs{v}_{h,i})_{\Omega_i^t}\\
        &\qquad\qquad-\left(\left(\bs{w}_{h,i}\cdot\left(\nabla\bs{w}_{h,i}^\mathrm{T}\nabla\right)\right)(\bs{u}_i-\tilde{\bs{u}}_i),\bs{v}_{h,i}\right)_{\Omega_i^t}-\kappa((\nabla\cdot\bs{w}_{h,i})(\bs{u}_i-\tilde{\bs{u}}_i),\bs{v}_{h,i})_{\Omega_i^t}\\
        &\qquad\qquad+((\nabla\cdot\bs{w}_{h,i})(p_i-\tilde{p}_i),\nabla\cdot\bs{v}_{h,i})_{\Omega_i^t}-\left(\nabla\bs{w}_{h,i}(p_i-\tilde{p}_i),\nabla\bs{v}_{h,i}^\mathrm{T}\right)_{\Omega_i^t}+(\lambda_{h,i}-\tilde{\lambda}_{h,i},\nabla\cdot\bs{v}_{h,i})_{\Omega_i^t}\Bigg]\Bigg\}\\
        \le&\ C\sum_{i=1}^2\left[\|\xi_i\|_{1,\Omega_i^t}+\|\eta_i\|_{1,\Omega_i^t}+\|\delta_i\|_{0,\Omega_i^t}+\|\bs{u}_i-\tilde{\bs{u}}_i\|_{1,\Omega_i^t}+\|p_i-\tilde{p}_i\|_{0,\Omega_i^t}+|\lambda_{h,i}-\tilde{\lambda}_{h,i}|\right].
    \end{aligned}
\end{equation}
Combining \eqref{eq:semi4-errest1}-\eqref{eq:semi4-errest5}, and applying the $\varepsilon$-Young's inequality as well as Lemma \ref{lem:semi1}, we obtain
\begin{equation}
    \label{eq:semi4-errest6}
    \begin{aligned}
        \sum_{i=1}^2\|\xi_i\|_{(H^1(\Omega_i^t))^d}&\le Ch^k\sum_{i=1}^2\left[\|\bs{u}_i\|_{(H^{k+1}(\Omega_i^t))^d}+\|\bs{u}_i\|_{(W^{2,\infty}(\Omega_i^t))^d}+\left\|\left.\frac{\partial\bs{u}_i}{\partial t}\right|_{\hat{\bs{x}}}\right\|_{(H^{k+1}(\Omega_i^t))^d}\right.\\
        &\qquad\qquad\quad\ +\left.\|p_i\|_{H^k(\Omega_i^t)}+\|p_i\|_{W^{1,\infty}(\Omega_i^t)}+\left\|\left.\frac{\partial p_i}{\partial t}\right|_{\hat{\bs{x}}}\right\|_{H^k(\Omega_i^t)}\right].
    \end{aligned}
\end{equation}
Then, by combining the above estimates with \eqref{eq:semi4-errest5}, we arrive at
\begin{equation}
    \label{eq:semi4-errest7}
    \begin{aligned}
        \sum_{i=1}^2\|\phi_i\|_{L^2(\Omega_i^t)}&\le Ch^k\sum_{i=1}^2\left[\|\bs{u}_i\|_{(H^{k+1}(\Omega_i^t))^d}+\|\bs{u}_i\|_{(W^{2,\infty}(\Omega_i^t))^d}+\left\|\left.\frac{\partial\bs{u}_i}{\partial t}\right|_{\hat{\bs{x}}}\right\|_{(H^{k+1}(\Omega_i^t))^d}\right.\\
        &\qquad\qquad\quad\ +\left.\|p_i\|_{H^k(\Omega_i^t)}+\|p_i\|_{W^{1,\infty}(\Omega_i^t)}+\left\|\left.\frac{\partial p_i}{\partial t}\right|_{\hat{\bs{x}}}\right\|_{H^k(\Omega_i^t)}\right].
    \end{aligned}
\end{equation}
By using the triangle inequality, we conclude that
\begin{align*}
    &\ \sum_{i=1}^2\left[\left\|\left.\frac{\partial\bs{u}_i}{\partial t}\right|_{\hat{\bs{x}}}^h-\left.\frac{\partial\tilde{\bs{u}}_i}{\partial t}\right|_{\hat{\bs{x}}}^h\right\|_{(H^1(\Omega_i^t))^d}+\left\|\left.\frac{\partial p_i}{\partial t}\right|_{\hat{\bs{x}}}^h-\left.\frac{\partial\tilde{p}_i}{\partial t}\right|_{\hat{\bs{x}}}^h\right\|_{L^2(\Omega_i^t)}\right]\\
    =&\ \sum_{i=1}^2\left[\|\eta_i+\xi_i\|_{(H^1(\Omega_i^t))^d}+\|\delta_i+\phi_i\|_{L^2(\Omega_i^t)}\right]\\
    \le&\ Ch^k\sum_{i=1}^2\left[\|\bs{u}_i\|_{(H^{k+1}(\Omega_i^t))^d}+\|\bs{u}_i\|_{(W^{2,\infty}(\Omega_i^t))^d}+\left\|\left.\frac{\partial\bs{u}_i}{\partial t}\right|_{\hat{\bs{x}}}\right\|_{(H^{k+1}(\Omega_i^t))^d}\right.\\
    &\qquad\qquad\ +\left.\|p_i\|_{H^k(\Omega_i^t)}+\|p_i\|_{W^{1,\infty}(\Omega_i^t)}+\left\|\left.\frac{\partial p_i}{\partial t}\right|_{\hat{\bs{x}}}\right\|_{H^k(\Omega_i^t)}\right].
\end{align*}

Finally, we estimate $\displaystyle\left\|\left.\frac{\partial\bs{u}_i}{\partial t}\right|_{\hat{\bs{x}}}^h-\left.\frac{\partial\tilde{\bs{u}}_i}{\partial t}\right|_{\hat{\bs{x}}}^h\right\|_{(L^2(\Omega_i^t))^d}$. Let $\bs{\psi}_i=\left.\dfrac{\partial\bs{u}_i}{\partial t}\right|_{\hat{\bs{x}}}^h-\left.\dfrac{\partial\tilde{\bs{u}}_i}{\partial t}\right|_{\hat{\bs{x}}}^h$ and $\chi_i=\left(\left.\dfrac{\partial p_i}{\partial t}\right|_{\hat{\bs{x}}}^h-\lambda_{h,i}\right)-\left(\left.\dfrac{\partial\tilde{p}_i}{\partial t}\right|_{\hat{\bs{x}}}^h-\tilde{\lambda}_{h,i}\right)$ in \eqref{projadj1}-\eqref{projadj2}, take any $(\bs{v}_{h,1},\bs{v}_{h,2})\in\bs{U}_h^t$ and $(q_{h,1},q_{h,2})\in Q_h^t$, and apply \eqref{eq:semi4-erreq1}-\eqref{eq:semi4-erreq2} together with the adjoint property \eqref{propadj}, which yields
\begin{align}
    \notag
    &\ \sum_{i=1}^2\left(\tilde{\bs{f}}_i,\left.\frac{\partial\bs{u}_1}{\partial t}\right|_{\hat{\bs{x}}}^h-\left.\frac{\partial\tilde{\bs{u}}_1}{\partial t}\right|_{\hat{\bs{x}}}^h\right)_{\Omega_i^t}\\
    \notag
    =&\ a^\star\left(\bs{v}_1,\bs{v}_2;\left.\frac{\partial\bs{u}_1}{\partial t}\right|_{\hat{\bs{x}}}^h-\left.\frac{\partial\tilde{\bs{u}}_1}{\partial t}\right|_{\hat{\bs{x}}}^h,\left.\frac{\partial\bs{u}_2}{\partial t}\right|_{\hat{\bs{x}}}^h-\left.\frac{\partial\tilde{\bs{u}}_2}{\partial t}\right|_{\hat{\bs{x}}}^h\right)-b\left(\left.\frac{\partial\bs{u}_1}{\partial t}\right|_{\hat{\bs{x}}}^h-\left.\frac{\partial\tilde{\bs{u}}_1}{\partial t}\right|_{\hat{\bs{x}}}^h,\left.\frac{\partial\bs{u}_2}{\partial t}\right|_{\hat{\bs{x}}}^h-\left.\frac{\partial\tilde{\bs{u}}_2}{\partial t}\right|_{\hat{\bs{x}}}^h;q_1,q_2\right)\\
    \notag
    =&\ a^\star\left(\bs{v}_1-\bs{v}_{h,1},\bs{v}_2-\bs{v}_{h,2};\left.\frac{\partial\bs{u}_1}{\partial t}\right|_{\hat{\bs{x}}}^h-\left.\frac{\partial\tilde{\bs{u}}_1}{\partial t}\right|_{\hat{\bs{x}}}^h,\left.\frac{\partial\bs{u}_2}{\partial t}\right|_{\hat{\bs{x}}}^h-\left.\frac{\partial\tilde{\bs{u}}_2}{\partial t}\right|_{\hat{\bs{x}}}^h\right)\\
    \notag
    &-b\left(\left.\frac{\partial\bs{u}_1}{\partial t}\right|_{\hat{\bs{x}}}^h-\left.\frac{\partial\tilde{\bs{u}}_1}{\partial t}\right|_{\hat{\bs{x}}}^h,\left.\frac{\partial\bs{u}_2}{\partial t}\right|_{\hat{\bs{x}}}^h-\left.\frac{\partial\tilde{\bs{u}}_2}{\partial t}\right|_{\hat{\bs{x}}}^h;q_1-q_{h,1},q_2-q_{h,2}\right)\\
    \notag
    &+a^\star\left(\bs{v}_{h,1},\bs{v}_{h,2};\left.\frac{\partial\bs{u}_1}{\partial t}\right|_{\hat{\bs{x}}}^h-\left.\frac{\partial\tilde{\bs{u}}_1}{\partial t}\right|_{\hat{\bs{x}}}^h,\left.\frac{\partial\bs{u}_2}{\partial t}\right|_{\hat{\bs{x}}}^h-\left.\frac{\partial\tilde{\bs{u}}_2}{\partial t}\right|_{\hat{\bs{x}}}^h\right)-b\left(\left.\frac{\partial\bs{u}_1}{\partial t}\right|_{\hat{\bs{x}}}^h-\left.\frac{\partial\tilde{\bs{u}}_1}{\partial t}\right|_{\hat{\bs{x}}}^h,\left.\frac{\partial\bs{u}_2}{\partial t}\right|_{\hat{\bs{x}}}^h-\left.\frac{\partial\tilde{\bs{u}}_2}{\partial t}\right|_{\hat{\bs{x}}}^h;q_{h,1},q_{h,2}\right)\\
    \notag
    =&\ a\left(\left.\frac{\partial\bs{u}_1}{\partial t}\right|_{\hat{\bs{x}}}^h-\left.\frac{\partial\tilde{\bs{u}}_1}{\partial t}\right|_{\hat{\bs{x}}}^h,\left.\frac{\partial\bs{u}_2}{\partial t}\right|_{\hat{\bs{x}}}^h-\left.\frac{\partial\tilde{\bs{u}}_2}{\partial t}\right|_{\hat{\bs{x}}}^h;\bs{v}_1-\bs{v}_{h,1},\bs{v}_2-\bs{v}_{h,2}\right)\\
    \notag
    &-b\left(\left.\frac{\partial\bs{u}_1}{\partial t}\right|_{\hat{\bs{x}}}^h-\left.\frac{\partial\tilde{\bs{u}}_1}{\partial t}\right|_{\hat{\bs{x}}}^h,\left.\frac{\partial\bs{u}_2}{\partial t}\right|_{\hat{\bs{x}}}^h-\left.\frac{\partial\tilde{\bs{u}}_2}{\partial t}\right|_{\hat{\bs{x}}}^h;q_1-q_{h,1},q_2-q_{h,2}\right)\\
    \notag
    &+a\left(\left.\frac{\partial\bs{u}_1}{\partial t}\right|_{\hat{\bs{x}}}^h-\left.\frac{\partial\tilde{\bs{u}}_1}{\partial t}\right|_{\hat{\bs{x}}}^h,\left.\frac{\partial\bs{u}_2}{\partial t}\right|_{\hat{\bs{x}}}^h-\left.\frac{\partial\tilde{\bs{u}}_2}{\partial t}\right|_{\hat{\bs{x}}}^h;\bs{v}_{h,1},\bs{v}_{h,2}\right)-b\left(\left.\frac{\partial\bs{u}_1}{\partial t}\right|_{\hat{\bs{x}}}^h-\left.\frac{\partial\tilde{\bs{u}}_1}{\partial t}\right|_{\hat{\bs{x}}}^h,\left.\frac{\partial\bs{u}_2}{\partial t}\right|_{\hat{\bs{x}}}^h-\left.\frac{\partial\tilde{\bs{u}}_2}{\partial t}\right|_{\hat{\bs{x}}}^h;q_{h,1},q_{h,2}\right)\\
    \notag
    &-b\left(\bs{v}_{h,1},\bs{v}_{h,2};\left(\left.\frac{\partial p_1}{\partial t}\right|_{\hat{\bs{x}}}^h-\lambda_{h,1}\right)-\left(\left.\frac{\partial\tilde{p}_1}{\partial t}\right|_{\hat{\bs{x}}}^h-\tilde{\lambda}_{h,1}\right),\left(\left.\frac{\partial p_2}{\partial t}\right|_{\hat{\bs{x}}}^h-\lambda_{h,2}\right)-\left(\left.\frac{\partial\tilde{p}_2}{\partial t}\right|_{\hat{\bs{x}}}^h-\tilde{\lambda}_{h,2}\right)\right)\\
    \notag
    &+b\left(\bs{v}_1,\bs{v}_2;\left(\left.\frac{\partial p_1}{\partial t}\right|_{\hat{\bs{x}}}^h-\lambda_{h,1}\right)-\left(\left.\frac{\partial\tilde{p}_1}{\partial t}\right|_{\hat{\bs{x}}}^h-\tilde{\lambda}_{h,1}\right),\left(\left.\frac{\partial p_2}{\partial t}\right|_{\hat{\bs{x}}}^h-\lambda_{h,2}\right)-\left(\left.\frac{\partial\tilde{p}_2}{\partial t}\right|_{\hat{\bs{x}}}^h-\tilde{\lambda}_{h,2}\right)\right)\\
    \notag
    &-b\left(\bs{v}_1-\bs{v}_{h,1},\bs{v}_2-\bs{v}_{h,2};\left(\left.\frac{\partial p_1}{\partial t}\right|_{\hat{\bs{x}}}^h-\lambda_{h,1}\right)-\left(\left.\frac{\partial\tilde{p}_1}{\partial t}\right|_{\hat{\bs{x}}}^h-\tilde{\lambda}_{h,1}\right),\left(\left.\frac{\partial p_2}{\partial t}\right|_{\hat{\bs{x}}}^h-\lambda_{h,2}\right)-\left(\left.\frac{\partial\tilde{p}_2}{\partial t}\right|_{\hat{\bs{x}}}^h-\tilde{\lambda}_{h,2}\right)\right)\\
    \notag
    =&\ a\left(\left.\frac{\partial\bs{u}_1}{\partial t}\right|_{\hat{\bs{x}}}^h-\left.\frac{\partial\tilde{\bs{u}}_1}{\partial t}\right|_{\hat{\bs{x}}}^h,\left.\frac{\partial\bs{u}_2}{\partial t}\right|_{\hat{\bs{x}}}^h-\left.\frac{\partial\tilde{\bs{u}}_2}{\partial t}\right|_{\hat{\bs{x}}}^h;\bs{v}_1-\bs{v}_{h,1},\bs{v}_2-\bs{v}_{h,2}\right)\\
    \notag
    &-b\left(\left.\frac{\partial\bs{u}_1}{\partial t}\right|_{\hat{\bs{x}}}^h-\left.\frac{\partial\tilde{\bs{u}}_1}{\partial t}\right|_{\hat{\bs{x}}}^h,\left.\frac{\partial\bs{u}_2}{\partial t}\right|_{\hat{\bs{x}}}^h-\left.\frac{\partial\tilde{\bs{u}}_2}{\partial t}\right|_{\hat{\bs{x}}}^h;q_1-q_{h,1},q_2-q_{h,2}\right)\\
    \notag
    &-b\left(\bs{v}_1-\bs{v}_{h,1},\bs{v}_2-\bs{v}_{h,2};\left(\left.\frac{\partial p_1}{\partial t}\right|_{\hat{\bs{x}}}^h-\lambda_{h,1}\right)-\left(\left.\frac{\partial\tilde{p}_1}{\partial t}\right|_{\hat{\bs{x}}}^h-\tilde{\lambda}_{h,1}\right),\left(\left.\frac{\partial p_2}{\partial t}\right|_{\hat{\bs{x}}}^h-\lambda_{h,2}\right)-\left(\left.\frac{\partial\tilde{p}_2}{\partial t}\right|_{\hat{\bs{x}}}^h-\tilde{\lambda}_{h,2}\right)\right)\\
    \notag
    &+\sum_{i=1}^2\Bigg[-(\mu_i(\nabla\cdot\bs{w}_{h,i})\nabla(\bs{u}_i-\tilde{\bs{u}}_i),\nabla\bs{v}_{h,i})_{\Omega_i^t}+\left(\mu_i\nabla(\bs{u}_i-\tilde{\bs{u}}_i)\left(\nabla\bs{w}_{h,i}+\nabla\bs{w}_{h,i}^\mathrm{T}\right),\nabla\bs{v}_{h,i}\right)_{\Omega_i^t}\\
    \notag
    &\qquad+\left(\left(\left.\frac{\partial\bs{w}_{h,i}}{\partial t}\right|_{\hat{\bs{x}}}^h\cdot\nabla\right)(\bs{u}_i-\tilde{\bs{u}}_i),\bs{v}_{h,i}\right)_{\Omega_i^t}+((\nabla\cdot\bs{w}_{h,i})(\bs{w}_{h,i}\cdot\nabla)(\bs{u}_i-\tilde{\bs{u}}_i),\bs{v}_{h,i})_{\Omega_i^t}\\
    \notag
    &\qquad-\left(\left(\bs{w}_{h,i}\cdot\left(\nabla\bs{w}_{h,i}^\mathrm{T}\nabla\right)\right)(\bs{u}_i-\tilde{\bs{u}}_i),\bs{v}_{h,i}\right)_{\Omega_i^t}-\kappa((\nabla\cdot\bs{w}_{h,i})(\bs{u}_i-\tilde{\bs{u}}_i),\bs{v}_{h,i})_{\Omega_i^t}\\
    \notag
    &\qquad+((\nabla\cdot\bs{w}_{h,i})(p_i-\tilde{p}_i),\nabla\cdot\bs{v}_{h,i})_{\Omega_i^t}-\left(\nabla\bs{w}_{h,i}(p_i-\tilde{p}_i),\nabla\bs{v}_{h,i}^\mathrm{T}\right)_{\Omega_i^t}+(\lambda_{h,i}-\tilde{\lambda}_{h,i},\nabla\cdot(\bs{v}_{h,i}-\bs{v}_i))_{\Omega_i^t}\\
    \label{eq:semi4-errest8}
    &\qquad+(\nabla\cdot(\bs{u}_i-\tilde{\bs{u}}_i),(\nabla\cdot\bs{w}_{h,i})q_{h,i})_{\Omega_i^t}-\left(\nabla(\bs{u}_i-\tilde{\bs{u}}_i)^\mathrm{T},\nabla\bs{w}_{h,i}q_{h,i}\right)_{\Omega_i^t}\Bigg].
\end{align}
In the above expression, $\bs{v}_{h,i}$, $\bs{w}_{h,i}$ and
$q_{h,i}$ can be replaced by $(\bs{v}_{h,i}-\bs{v}_i)+\bs{v}_i$,
$(\bs{w}_{h,i}-\bs{w}_i)+\bs{w}_i$ and $(q_{h,i}-q_i)+q_i$,
respectively, if they stay alone. In order to present the underlying
idea of our analysis briefly, without loss of generality, in the
following we present only the detailed estimate for one
representative term,
$(\mu_i(\nabla\cdot\bs{w}_{h,i})\nabla(\bs{u}_i-\tilde{\bs{u}}_i),\nabla\bs{v}_{h,i})_{\Omega_i^t}$,
while the remaining terms can be handled in a similar manner without
introducing any essential additional difficulty.
By H\"older's inequality, together with \eqref{w-bound}, Lemma
\ref{lem:semi3} and standard interpolation error estimates, we have
\begin{align}
    &\ \sum_{i=1}^2(\mu_i(\nabla\cdot\bs{w}_{h,i})\nabla(\bs{u}_i-\tilde{\bs{u}}_i),\nabla\bs{v}_{h,i})_{\Omega_i^t}\notag\\
    =&\ \sum_{i=1}^2\Big[(\mu_i(\nabla\cdot\bs{w}_{h,i})\nabla(\bs{u}_i-\tilde{\bs{u}}_i),\nabla(\bs{v}_{h,i}-\bs{v}_i))_{\Omega_i^t}+(\mu_i(\nabla\cdot(\bs{w}_{h,i}-\bs{w}_i))\nabla(\bs{u}_i-\tilde{\bs{u}}_i),\nabla\bs{v}_i)_{\Omega_i^t}\notag\\
    &\qquad+(\mu_i(\nabla\cdot\bs{w}_i)\nabla(\bs{u}_i-\tilde{\bs{u}}_i),\nabla\bs{v}_i)_{\Omega_i^t}\Big]\notag\\
    \le&\ C\sum_{i=1}^2\big(\|\nabla\cdot\bs{w}_{h,i}\|_{L^\infty}\|\nabla(\bs{u}_i-\tilde{\bs{u}}_i)\|_{L^2}\|\nabla(\bs{v}_{h,i}-\bs{v}_i)\|_{L^2}+\|\nabla\cdot(\bs{w}_{h,i}-\bs{w}_i)\|_{L^3}\|\nabla(\bs{u}_i-\tilde{\bs{u}}_i)\|_{L^2}\|\nabla\bs{v}_i\|_{L^6}\big)\notag\\
    &+Ch^{k+1}\left(\sum_{i=1}^2\big(\|\bs{u}_i\|_{H^{k+1}}+\|p_i\|_{H^k}\big)\right)\left(\sum_{i=1}^2\|(\nabla\cdot\bs{w}_i)\nabla\bs{v}_i\|_{H^1}\right)\qquad\qquad\qquad\qquad\text{(using Lemma
\ref{lem:semi3})}\notag\\
    \le&\ Ch^{k+1}\sum_{i=1}^2\big(\|\bs{w}_{h,i}\|_{W^{1,\infty}}\|\bs{u}_i\|_{H^{k+1}}\|\bs{v}_i\|_{H^2}+\|\bs{w}_i\|_{W^{2,3}}\|\bs{u}_i\|_{H^{k+1}}\|\bs{v}_i\|_{H^2}\big)\notag\\
    &+Ch^{k+1}\left(\sum_{i=1}^2\big(\|\bs{u}_i\|_{H^{k+1}}+\|p_i\|_{H^k}\big)\right)\left(\sum_{i=1}^2\|\bs{w}_i\|_{W^{2,\infty}}\|\bs{v}_i\|_{H^2}\right)\notag\\
    \le&\
    Ch^{k+1}\left(\sum_{i=1}^2\big(\|\bs{u}_i\|_{H^{k+1}}+\|p_i\|_{H^k}\big)\right)\left(\sum_{i=1}^2\|\bs{v}_i\|_{H^2}\right).\label{example_estimate}
\end{align}

Therefore, by applying \eqref{a-cont}, \eqref{b-cont}, the
Cauchy-Schwarz inequality, Lemmas \ref{lem:semi1}-\ref{lem:semi3},
the regularity assumption \eqref{regadj1} and standard interpolation
error estimates, also, by conducting the similar estimation as shown
in \eqref{example_estimate} for the remaining terms, we obtain the
following estimation for \eqref{eq:semi4-errest8}:
\begin{align}
    \label{eq:semi4-errest9}
    &\ \sum_{i=1}^2\left(\tilde{\bs{f}}_i,\left.\frac{\partial\bs{u}_1}{\partial t}\right|_{\hat{\bs{x}}}^h-\left.\frac{\partial\tilde{\bs{u}}_1}{\partial t}\right|_{\hat{\bs{x}}}^h\right)_{\Omega_i^t}\\
    \notag
    \le&\ C\left(\sum_{i=1}^2\left[\left\|\left.\frac{\partial\bs{u}_i}{\partial t}\right|_{\hat{\bs{x}}}^h-\left.\frac{\partial\tilde{\bs{u}}_i}{\partial t}\right|_{\hat{\bs{x}}}^h\right\|_{(H^1(\Omega_i^t))^d}+\left\|\left.\frac{\partial p_i}{\partial t}\right|_{\hat{\bs{x}}}^h-\left.\frac{\partial\tilde{p}_i}{\partial t}\right|_{\hat{\bs{x}}}^h\right\|_{L^2(\Omega_i^t)}\right]\right)\\
    \notag
    &\quad\ \times\left(\sum_{i=1}^2\left[\|\bs{v}_i-\bs{v}_{h,i}\|_{(H^1(\Omega_i^t))^d}+\|q_i-q_{h,i}\|_{L^2(\Omega_i^t)}\right]\right)\\
    \notag
    &+C\left(\sum_{i=1}^2\left[\|\bs{u}_i-\tilde{\bs{u}}_i\|_{(H^1(\Omega_i^t))^d}+\|p_i-\tilde{p}_i\|_{L^2(\Omega_i^t)}\right]\right)\times\left(\sum_{i=1}^2\left[\|\bs{v}_i-\bs{v}_{h,i}\|_{(H^1(\Omega_i^t))^d}+\|q_i-q_{h,i}\|_{L^2(\Omega_i^t)}\right]\right.\\
    \notag
    &\qquad\quad\left.+\sum_{i=1}^2\|\bs{w}_i-\bs{w}_{h,i}\|_{(W^{1,3}(\Omega_i^t))^d}
    \left[\|\bs{v}_i\|_{(W^{1,6}(\Omega_i^t))^d}+\|q_i\|_{L^6(\Omega_i^t)}\right]\right)\\
    \notag
    &+Ch^{k+1}\left(\sum_{i=1}^2\left[\|\bs{u}_i\|_{(H^{k+1}(\Omega_i^t))^d}+\|p_i\|_{H^k(\Omega_i^t)}\right]\right)\times\left(\sum_{i=1}^2\left[\|\bs{v}_i\|_{(H^2(\Omega_i^t))^d}+\|q_i\|_{H^1(\Omega_i^t)}\right]\right)\\
    \notag
    \le&\ Ch^{k+1}\left(\sum_{i=1}^2\left[\|\bs{u}_i\|_{(H^{k+1}(\Omega_i^t))^d}+\|\bs{u}_i\|_{(W^{2,\infty}(\Omega_i^t))^d}+\left\|\left.\frac{\partial\bs{u}_i}{\partial t}\right|_{\hat{\bs{x}}}\right\|_{(H^{k+1}(\Omega_i^t))^d}\right.\right.\\
    \notag
    &\qquad\qquad\qquad\left.+\left.\|p_i\|_{H^k(\Omega_i^t)}+\|p_i\|_{W^{1,\infty}(\Omega_i^t)}+\left\|\left.\frac{\partial p_i}{\partial t}\right|_{\hat{\bs{x}}}\right\|_{H^k(\Omega_i^t)}\right]\right)\times\left(\sum_{i=1}^2\left[\|\bs{v}_i\|_{(H^2(\Omega_i^t))^d}+\|q_i\|_{H^1(\Omega_i^t)}\right]\right)\\
    \notag
    \le&\ Ch^{k+1}\left(\sum_{i=1}^2\left[\|\bs{u}_i\|_{(H^{k+1}(\Omega_i^t))^d}+\|\bs{u}_i\|_{(W^{2,\infty}(\Omega_i^t))^d}+\left\|\left.\frac{\partial\bs{u}_i}{\partial t}\right|_{\hat{\bs{x}}}\right\|_{(H^{k+1}(\Omega_i^t))^d}\right.\right.\\
    \notag
    &\qquad\qquad\qquad\left.+\left.\|p_i\|_{H^k(\Omega_i^t)}+\|p_i\|_{W^{1,\infty}(\Omega_i^t)}+\left\|\left.\frac{\partial p_i}{\partial t}\right|_{\hat{\bs{x}}}\right\|_{H^k(\Omega_i^t)}\right]\right)\times\left(\sum_{i=1}^2\|\tilde{\bs{f}}_i\|_{(L^2(\Omega_i^t))^d}\right),
\end{align}
thereby leading to
\begin{align*}
    &\ \left\|\left(\left.\frac{\partial\bs{u}_1}{\partial t}\right|_{\hat{\bs{x}}}^h-\left.\frac{\partial\tilde{\bs{u}}_1}{\partial t}\right|_{\hat{\bs{x}}}^h,\left.\frac{\partial\bs{u}_2}{\partial t}\right|_{\hat{\bs{x}}}^h-\left.\frac{\partial\tilde{\bs{u}}_2}{\partial t}\right|_{\hat{\bs{x}}}^h\right)\right\|_0\\
    =&\ \sup_{(\tilde{\bs{f}}_1,\tilde{\bs{f}}_2)\in(L^2(\Omega_1^t))^d\times(L^2(\Omega_2^t))^d}\frac{\displaystyle\left|\sum_{i=1}^2\left(\tilde{\bs{f}}_i,\left.\frac{\partial\bs{u}_i}{\partial t}\right|_{\hat{\bs{x}}}^h-\left.\frac{\partial\tilde{\bs{u}}_i}{\partial t}\right|_{\hat{\bs{x}}}^h\right)_{\Omega_i^t}\right|}{\|(\tilde{\bs{f}}_1,\tilde{\bs{f}}_2)\|_0}\\
    \le&\ Ch^{k+1}\sum_{i=1}^2\left[\|\bs{u}_i\|_{(H^{k+1}(\Omega_i^t))^d}+\|\bs{u}_i\|_{(W^{2,\infty}(\Omega_i^t))^d}+\left\|\left.\frac{\partial\bs{u}_i}{\partial t}\right|_{\hat{\bs{x}}}\right\|_{(H^{k+1}(\Omega_i^t))^d}\right.\\
    &\qquad\qquad\quad\ +\left.\|p_i\|_{H^k(\Omega_i^t)}+\|p_i\|_{W^{1,\infty}(\Omega_i^t)}+\left\|\left.\frac{\partial p_i}{\partial t}\right|_{\hat{\bs{x}}}\right\|_{H^k(\Omega_i^t)}\right],
\end{align*}
which is the desired error estimate in the $L^2$-norm shown in \eqref{eq:semi4}.
\end{proof}


\subsection{Error analysis of the semi-discrete scheme}

\begin{theorem}
Suppose that $((\bs{u}_1,\bs{u}_2),(p_1,p_2))$ is the solution to
\eqref{weak1} satisfying the regularity assumptions
\eqref{reg1}-\eqref{reg5}, and
$((\bs{u}_{h,1},\bs{u}_{h,2}),(p_{h,1},p_{h,2}))$ is the solution to
\eqref{semi1}-\eqref{semi2}, then we have the following error
estimate
\begin{equation}
    \label{semi-errest}
    \begin{aligned}
        &\ \sum_{i=1}^2\left[\|\bs{u}_i-\bs{u}_{h,i}\|_{L^\infty(0,T;(L^2(\Omega_i^t))^d)}+\left\|\left.\frac{\partial\bs{u}_i}{\partial t}\right|_{\hat{\bs{x}}}^h-\left.\frac{\partial\bs{u}_{h,i}}{\partial t}\right|_{\hat{\bs{x}}}^h\right\|_{L^2(0,T;(L^2(\Omega_i^t))^d)}\right]\\
        &+h\sum_{i=1}^2\left[\|\bs{u}_i-\bs{u}_{h,i}\|_{L^\infty(0,T;(H^1(\Omega_i^t))^d)}+\|p_i-p_{h,i}\|_{L^2(0,T;L^2(\Omega_i^t))}\right]\\
        \le&\ Ch^{k+1}\sum_{i=1}^2\left[\|\bs{u}_i\|_{L^\infty(0,T;(H^{k+1}(\Omega_i^t))^d)}+\|\bs{u}_i\|_{L^2(0,T;(W^{2,\infty}(\Omega_i^t))^d)}+\left\|\left.\frac{\partial\bs{u}_i}{\partial t}\right|_{\hat{\bs{x}}}\right\|_{L^2(0,T;(H^{k+1}(\Omega_i^t))^d)}\right.\\
        &\qquad\qquad\quad\ +\left.\|p_i\|_{L^\infty(0,T;H^k(\Omega_i^t))}+\|p_i\|_{L^2(0,T;W^{1,\infty}(\Omega_i^t))}+\left\|\left.\frac{\partial p_i}{\partial t}\right|_{\hat{\bs{x}}}\right\|_{L^2(0,T;H^k(\Omega_i^t))}\right].
    \end{aligned}
\end{equation}
\end{theorem}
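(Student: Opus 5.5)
We split the error with the $H^1$-projection of \eqref{proj1}-\eqref{proj2}. Write
\[
\bs{u}_i-\bs{u}_{h,i}=(\bs{u}_i-\tilde{\bs{u}}_i)+(\tilde{\bs{u}}_i-\bs{u}_{h,i})=:\bs{\rho}_i+\bs{\theta}_i,\qquad
p_i-p_{h,i}=(p_i-\tilde p_i)+(\tilde p_i-p_{h,i})=:\sigma_i+\zeta_i .
\]
The projection parts $\bs{\rho}_i,\sigma_i$, together with $\partial\bs{\rho}_i/\partial t|_{\hat{\bs{x}}}^h$ in $L^2$ and $H^1$, are already controlled optimally by Lemmas \ref{lem:semi1} and \ref{lem:semi4}. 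It therefore suffices to bound the discrete parts $(\bs{\theta}_i,\zeta_i)\in\bs{U}_h^t\times Q_h^t$.

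\textbf{Error equation.} The exact solution satisfies \eqref{weak1}. Because $\partial\bs{u}_i/\partial t|_{\hat{\bs{x}}}-(\bs{w}_i\cdot\nabla)\bs{u}_i=\partial\bs{u}_i/\partial t=\partial\bs{u}_i/\partial t|_{\hat{\bs{x}}}^h-(\bs{w}_{h,i}\cdot\nabla)\bs{u}_i$, it also satisfies \eqref{semi1} with the discrete ALE derivative and $\bs{w}_{h,i}$, for every test function in $\bs{U}_h^t\subset\bs{U}^t$. Subtracting \eqref{semi1}-\eqref{semi2} and using \eqref{proj1}-\eqref{proj2} (the $\kappa$-term of $a$ appears only in the projection) gives
\[
\sum_i\Big(\tfrac{\partial\bs{\theta}_i}{\partial t}\big|^h_{\hat{\bs{x}}},\bs{v}_{h,i}\Big)+a(\bs{\theta};\bs{v}_h)-\kappa\sum_i(\bs{\theta}_i,\bs{v}_{h,i})-b(\bs{v}_h;\zeta)
=-\sum_i\Big(\tfrac{\partial\bs{\rho}_i}{\partial t}\big|^h_{\hat{\bs{x}}},\bs{v}_{h,i}\Big)+\kappa\sum_i(\bs{\rho}_i,\bs{v}_{h,i}),
\]
with $b(\bs{\theta};q_h)=0$. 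Here the pressure mean normalization is absorbed, since $\nabla\cdot\bs{v}_{h}$ integrates to zero overall.

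\textbf{Energy estimate.} Take $\bs{v}_{h,i}=\bs{\theta}_i$. Since $\bs{\theta}_i$ is discretely divergence-free, the pressure term vanishes. Reynolds' theorem gives $(\partial\bs{\theta}/\partial t|^h,\bs{\theta})=\frac12\frac{d}{dt}\|\bs{\theta}\|_0^2-\frac12((\nabla\cdot\bs{w}_h)\bs{\theta},\bs{\theta})$. Using \eqref{a-coer}, \eqref{w-bound} and Young's inequality,
\[
\frac{d}{dt}\|\bs{\theta}\|_0^2+c\|\bs{\theta}\|_1^2\le C\|\bs{\theta}\|_0^2+C\Big\|\tfrac{\partial\bs{\rho}}{\partial t}\big|^h_{\hat{\bs{x}}}\Big\|_0^2+C\|\bs{\rho}\|_0^2 .
\]
Choose the initial value $\bs{u}_h(0)=\tilde{\bs{u}}(0)$, so $\bs{\theta}(0)=0$; otherwise $\|\bs{\theta}(0)\|$ is bounded by $O(h^{k+1})$. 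Gronwall's inequality and the $L^2$ parts of Lemmas \ref{lem:semi1} and \ref{lem:semi4} then give $\|\bs{\theta}\|_{L^\infty(L^2)}+\|\bs{\theta}\|_{L^2(H^1)}\le Ch^{k+1}(\cdots)$.

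\textbf{Second energy estimate.} To reach the $L^2(0,T;L^2)$ bound for the ALE time derivative and $L^\infty(H^1)$ for $\bs{\theta}$, test with $\bs{v}_{h,i}=\partial\bs{\theta}_i/\partial t|^h_{\hat{\bs{x}}}$. The pressure term no longer vanishes, because $\partial\bs{\theta}/\partial t|^h$ is not exactly discretely divergence-free. Differentiating $b(\bs{\theta};q_h)=0$ in time with \eqref{eq:semi0-2}, taking $q_h$ transported along the mesh so that its ALE derivative vanishes, gives
\[
b\Big(\tfrac{\partial\bs{\theta}}{\partial t}\big|^h;\zeta\Big)=\sum_i\Big[-(\nabla\cdot\bs{\theta}_i,(\nabla\cdot\bs{w}_{h,i})\zeta_i)+(\nabla\bs{\theta}_i^{\mathrm T},\nabla\bs{w}_{h,i}\zeta_i)\Big].
\]
This is bounded by $C\|\bs{\theta}\|_1\|\zeta\|_0$.

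The diffusion term is handled with \eqref{eq:semi0-1}:
\[
(\mu\nabla\bs{\theta},\nabla\partial_t\bs{\theta}|^h)=\tfrac12\tfrac{d}{dt}\|\mu^{1/2}\nabla\bs{\theta}\|^2+O(\|\bs{\theta}\|_1^2).
\]
The convective term is bounded by $C\|\bs{\theta}\|_1\|\partial_t\bs{\theta}|^h\|_0$.

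The pressure $\zeta$ is controlled by the discrete inf-sup condition applied to the error equation:
\[
\|\zeta\|_0\le C\big(\|\partial_t\bs{\theta}|^h\|_0+\|\bs{\theta}\|_1+\|\partial_t\bs{\rho}|^h\|_0+\|\bs{\rho}\|_0\big).
\]

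\textbf{Expected obstacle.} Substituting the inf-sup bound on $\zeta$ into the term $C\|\bs{\theta}\|_1\|\zeta\|_0$ produces $\|\bs{\theta}\|_1\|\partial_t\bs{\theta}|^h\|_0$, which Young's inequality absorbs. Gronwall then yields
\[
\|\partial_t\bs{\theta}|^h\|_{L^2(L^2)}+\|\bs{\theta}\|_{L^\infty(H^1)}+\|\zeta\|_{L^2(L^2)}\le Ch^{k+1}(\cdots).
\]
This is even superconvergent for $\bs{\theta}$ in $H^1$; it requires $\bs{\theta}(0)=0$ in $H^1$.

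The step I expect to be the main obstacle is exactly this pressure coupling under mesh motion, i.e. the commutator between discrete incompressibility and the ALE derivative. I would first try the identity above. If its constants degenerate, I would fall back on the pressure bound only in $L^2(0,T)$ at order $h^k$, which is all the theorem requires.

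\textbf{Conclusion.} Combine the bounds on $(\bs{\theta}_i,\zeta_i)$ with Lemmas \ref{lem:semi1} and \ref{lem:semi4}, in $L^\infty$ or $L^2$ in time, using the triangle inequality. This gives \eqref{semi-errest}: order $h^{k+1}$ for the $L^2$-type quantities and order $h^k$ for $\|\bs{u}-\bs{u}_h\|_{L^\infty(H^1)}$ and $\|p-p_h\|_{L^2(L^2)}$.
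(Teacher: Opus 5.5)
Your proposal is correct and follows essentially the same route as the paper. The shared steps are: split the error with the $H^1$-projection; test the error equation with $\partial\bs{\theta}_i/\partial t|_{\hat{\bs{x}}}^h$; control the pressure term through the time-differentiated discrete divergence constraint together with the discrete inf-sup bound on $\zeta$; apply Gr\"onwall with $\bs{\theta}(0)=\bs{0}$; and conclude by the triangle inequality with Lemmas \ref{lem:semi1} and \ref{lem:semi4}, which gives the same $O(h^{k+1})$ bound on $\|\bs{\theta}\|_{L^\infty(0,T;H^1)}$ that the paper obtains. Your preliminary energy estimate with $\bs{v}_{h,i}=\bs{\theta}_i$ is harmless but redundant, and the fallback you mention is not needed; it would also not by itself deliver the $h^{k+1}$ bound on the ALE time derivative.
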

\begin{proof}
Since $((\bs{u}_1,\bs{u}_2),(p_1,p_2))$ is the solution to
\eqref{weak1}, it satisfies the following equation
\begin{align*}
    &\sum_{i=1}^2\left[\left(\left.\frac{\partial\bs{u}_i}{\partial t}\right|_{\hat{\bs{x}}}^h,\bs{v}_{h,i}\right)_{\Omega_i^t}+(\mu_i\nabla\bs{u}_i,\nabla\bs{v}_{h,i})_{\Omega_i^t}-((\bs{w}_{h,i}\cdot\nabla)\bs{u}_i,\bs{v}_{h,i})_{\Omega_i^t}\right.\\
    &\qquad-(p_i,\nabla\cdot\bs{v}_{h,i})_{\Omega_i^t}\Bigg]=\sum_{i=1}^2(\bs{f}_i,\bs{v}_{h,i})_{\Omega_i^t}+\langle\bs{g},\bs{v}_{h,1}\rangle_{\Gamma^t},\qquad\forall\,(\bs{v}_{h,1},\bs{v}_{h,2})\in\bs{U}_h^t,
\end{align*}
\begin{equation*}
    \sum_{i=1}^2(\nabla\cdot\bs{u}_i,q_{h,i})_{\Omega_i^t}=0,\qquad\forall\,(q_{h,1},q_{h,2})\in Q_h^t.
\end{equation*}
Subtracting \eqref{semi1}-\eqref{semi2} from the above equations and employing $H^1$-projection \eqref{proj1}-\eqref{proj2}, we obtain the error equation
\begin{align*}
    &\sum_{i=1}^2\left[\left(\left.\frac{\partial\bs{u}_i}{\partial t}\right|_{\hat{\bs{x}}}^h-\left.\frac{\partial\bs{u}_{h,i}}{\partial t}\right|_{\hat{\bs{x}}}^h,\bs{v}_{h,i}\right)_{\Omega_i^t}+(\mu_i\nabla(\tilde{\bs{u}}_i-\bs{u}_{h,i}),\nabla\bs{v}_{h,i})_{\Omega_i^t}-((\bs{w}_{h,i}\cdot\nabla)(\tilde{\bs{u}}_i-\bs{u}_{h,i}),\bs{v}_{h,i})_{\Omega_i^t}\right.\\
    &\qquad-\kappa(\bs{u}_i-\tilde{\bs{u}}_i,\bs{v}_{h,i})_{\Omega_i^t}-(\tilde{p}_i-p_{h,i},\nabla\cdot\bs{v}_{h,i})_{\Omega_i^t}\Bigg]=0,\\
    &\sum_{i=1}^2(\nabla\cdot(\tilde{\bs{u}}_i-\bs{u}_{h,i}),q_{h,i})_{\Omega_i^t}=0.
\end{align*}
Introducing new variables $\eta_i=\bs{u}_i-\tilde{\bs{u}}_i$, $\xi_i=\tilde{\bs{u}}_i-\bs{u}_{h,i}$, $\delta_i=p_i-\tilde{p}_i$, $\phi_i=\tilde{p}_i-p_{h,i}$, we can rewrite the above error equations as follows:
\begin{eqnarray}
    &&\sum_{i=1}^2\left[\left(\left.\frac{\partial(\eta_i+\xi_i)}{\partial t}\right|_{\hat{\bs{x}}}^h,\bs{v}_{h,i}\right)_{\Omega_i^t}+(\mu_i\nabla\xi_i,\nabla\bs{v}_{h,i})_{\Omega_i^t}-((\bs{w}_{h,i}\cdot\nabla)\xi_i,\bs{v}_{h,i})_{\Omega_i^t}\right.\notag\\
    &&\qquad-\kappa(\eta_i,\bs{v}_{h,i})_{\Omega_i^t}-(\phi_i,\nabla\cdot\bs{v}_{h,i})_{\Omega_i^t}\Bigg]=0,\label{eq:semi-erreq1}\\
    &&\sum_{i=1}^2(\nabla\cdot\xi_i,q_{h,i})_{\Omega_i^t}=0.\label{eq:semi-erreq2}
\end{eqnarray}

Let $\bs{v}_{h,i}=\left.\dfrac{\partial\xi_i}{\partial
t}\right|_{\hat{\bs{x}}}^h$ in \eqref{eq:semi-erreq1}, yields
\begin{align*}
    &\sum_{i=1}^2\left[\left(\left.\frac{\partial(\eta_i+\xi_i)}{\partial t}\right|_{\hat{\bs{x}}}^h,\left.\frac{\partial\xi_i}{\partial t}\right|_{\hat{\bs{x}}}^h\right)_{\Omega_i^t}+\left(\mu_i\nabla\xi_i,\nabla\left(\left.\frac{\partial\xi_i}{\partial t}\right|_{\hat{\bs{x}}}^h\right)\right)_{\Omega_i^t}-\left((\bs{w}_{h,i}\cdot\nabla)\xi_i,\left.\frac{\partial\xi_i}{\partial t}\right|_{\hat{\bs{x}}}^h\right)_{\Omega_i^t}\right.\\
    &\qquad-\left.\kappa\left(\eta_i,\left.\frac{\partial\xi_i}{\partial t}\right|_{\hat{\bs{x}}}^h\right)_{\Omega_i^t}-\left(\phi_i,\nabla\cdot\left(\left.\frac{\partial\xi_i}{\partial t}\right|_{\hat{\bs{x}}}^h\right)\right)_{\Omega_i^t}\right]=0.
\end{align*}
Applying the Reynolds transport theorem, we obtain
\begin{align*}
    \left(\mu_i\nabla\xi_i,\nabla\left(\left.\frac{\partial\xi_i}{\partial t}\right|_{\hat{\bs{x}}}^h\right)\right)_{\Omega_i^t}&=\frac{1}{2}\left[\frac{d}{dt}(\mu_i\nabla\xi_i,\nabla\xi_i)_{\Omega_i^t}-(\mu_i(\nabla\cdot\bs{w}_{h,i})\nabla\xi_i,\nabla\xi_i)_{\Omega_i^t}\right.\\
    &\qquad\ \ +\left(\mu_i\nabla\xi_i\left(\nabla\bs{w}_{h,i}+\nabla\bs{w}_{h,i}^\mathrm{T}\right),\nabla\xi_i\right)_{\Omega_i^t}\bigg],\\
    \left(\phi_i,\nabla\cdot\left(\left.\frac{\partial\xi_i}{\partial t}\right|_{\hat{\bs{x}}}^h\right)\right)_{\Omega_i^t}&=-(\phi_i,(\nabla\cdot\bs{w}_{h,i})(\nabla\cdot\xi_i))_{\Omega_i^t}+\left(\phi_i\nabla\bs{w}_{h,i},\nabla\xi_i^\mathrm{T}\right)_{\Omega_i^t}.
\end{align*}
Then, we have
\begin{align*}
    &\ \sum_{i=1}^2\left[\left(\left.\frac{\partial\xi_i}{\partial t}\right|_{\hat{\bs{x}}}^h,\left.\frac{\partial\xi_i}{\partial t}\right|_{\hat{\bs{x}}}^h\right)_{\Omega_i^t}+\frac{1}{2}\frac{d}{dt}(\mu_i\nabla\xi_i,\nabla\xi_i)_{\Omega_i^t}\right]=\sum_{i=1}^2\left[-\left(\left.\frac{\partial\eta_i}{\partial t}\right|_{\hat{\bs{x}}}^h,\left.\frac{\partial\xi_i}{\partial t}\right|_{\hat{\bs{x}}}^h\right)_{\Omega_i^t}\right.\\
    +&\ \frac{1}{2}(\mu_i(\nabla\cdot\bs{w}_{h,i})\nabla\xi_i,\nabla\xi_i)_{\Omega_i^t}-\frac{1}{2}\left(\mu_i\nabla\xi_i\left(\nabla\bs{w}_{h,i}+\nabla\bs{w}_{h,i}^\mathrm{T}\right),\nabla\xi_i\right)_{\Omega_i^t}+\left((\bs{w}_{h,i}\cdot\nabla)\xi_i,\left.\frac{\partial\xi_i}{\partial t}\right|_{\hat{\bs{x}}}^h\right)_{\Omega_i^t}\\
    +&\left.\kappa\left(\eta_i,\left.\frac{\partial\xi_i}{\partial t}\right|_{\hat{\bs{x}}}^h\right)_{\Omega_i^t}-(\phi_i,(\nabla\cdot\bs{w}_{h,i})(\nabla\cdot\xi_i))_{\Omega_i^t}+\left(\phi_i\nabla\bs{w}_{h,i},\nabla\xi_i^\mathrm{T}\right)_{\Omega_i^t}\right]=\sum_{k=1}^7H_k.
\end{align*}
Using the Cauchy-Schwarz inequality and the $\varepsilon$-Young's inequality, it follows that
\begin{align*}
    H_1&\le C\sum_{i=1}^2\left\|\left.\frac{\partial\eta_i}{\partial t}\right|_{\hat{\bs{x}}}^h\right\|_{0,\Omega_i^t}^2+\varepsilon\sum_{i=1}^2\left\|\left.\frac{\partial\xi_i}{\partial t}\right|_{\hat{\bs{x}}}^h\right\|_{0,\Omega_i^t}^2,\\
    H_2+H_3&\le C\sum_{i=1}^2\|\nabla\xi_i\|_{0,\Omega_i^t}^2,\\
    H_4&\le C\sum_{i=1}^2\|\nabla\xi_i\|_{0,\Omega_i^t}^2+\varepsilon\sum_{i=1}^2\left\|\left.\frac{\partial\xi_i}{\partial t}\right|_{\hat{\bs{x}}}^h\right\|_{0,\Omega_i^t}^2,\\
    H_5&\le C\sum_{i=1}^2\|\eta_i\|_{0,\Omega_i^t}^2+\varepsilon\sum_{i=1}^2\left\|\left.\frac{\partial\xi_i}{\partial t}\right|_{\hat{\bs{x}}}^h\right\|_{0,\Omega_i^t}^2,\\
    H_6+H_7&\le C\sum_{i=1}^2\|\nabla\xi_i\|_{0,\Omega_i^t}^2+\varepsilon_\phi\sum_{i=1}^2\|\phi_i\|_{0,\Omega_i^t}^2.
\end{align*}
Choosing a sufficiently small $\varepsilon$, leads to
\begin{equation}
    \label{eq:semi-errest1}
    \sum_{i=1}^2\left[\left\|\left.\frac{\partial\xi_i}{\partial t}\right|_{\hat{\bs{x}}}^h\right\|_{0,\Omega_i^t}^2+\frac{d}{dt}\|\nabla\xi_i\|_{0,\Omega_i^t}^2\right]\le C\sum_{i=1}^2\left[\|\nabla\xi_i\|_{0,\Omega_i^t}^2+\|\eta_i\|_{0,\Omega_i^t}^2+\left\|\left.\frac{\partial\eta_i}{\partial t}\right|_{\hat{\bs{x}}}^h\right\|_{0,\Omega_i^t}^2\right]+\varepsilon_\phi\sum_{i=1}^2\|\phi_i\|_{0,\Omega_i^t}^2.
\end{equation}

To estimate $\|\phi_i\|_{0,\Omega_i^t}$, we use the discrete inf-sup condition and the error equation \eqref{eq:semi-erreq1}, resulting in
\begin{equation}
    \label{eq:semi-errest2}
    \begin{aligned}
        \gamma\|(\phi_1,\phi_2)\|_0&\le\sup_{(\bs{v}_{h,1},\bs{v}_{h,2})\in\bs{U}_h^t}\frac{\displaystyle\sum_{i=1}^2(\phi_i,\nabla\cdot\bs{v}_{h,i})_{\Omega_i^t}}{\|(\bs{v}_{h,1},\bs{v}_{h,2})\|_1}\\
        &=\sup_{(\bs{v}_{h,1},\bs{v}_{h,2})\in\bs{U}_h^t}\frac{1}{\|(\bs{v}_{h,1},\bs{v}_{h,2})\|_1}\sum_{i=1}^2\left[\left(\left.\frac{\partial(\eta_i+\xi_i)}{\partial t}\right|_{\hat{\bs{x}}}^h,\bs{v}_{h,i}\right)_{\Omega_i^t}\right.\\
        &\qquad\qquad\qquad\quad+(\mu_i\nabla\xi_i,\nabla\bs{v}_{h,i})_{\Omega_i^t}-((\bs{w}_{h,i}\cdot\nabla)\xi_i,\bs{v}_{h,i})_{\Omega_i^t}-\kappa(\eta_i,\bs{v}_{h,i})_{\Omega_i^t}\Bigg]\\
        &\le\sum_{i=1}^2\left[\left\|\left.\frac{\partial\xi_i}{\partial t}\right|_{\hat{\bs{x}}}^h\right\|_{0,\Omega_i^t}+\left\|\left.\frac{\partial\eta_i}{\partial t}\right|_{\hat{\bs{x}}}^h\right\|_{0,\Omega_i^t}\right]+C\sum_{i=1}^2\left[\|\nabla\xi_i\|_{0,\Omega_i^t}+\|\eta_i\|_{0,\Omega_i^t}\right].
    \end{aligned}
\end{equation}
Substituting \eqref{eq:semi-errest2} into \eqref{eq:semi-errest1} and choosing a sufficiently small $\varepsilon_\phi$, we obtain
\begin{equation*}
    \sum_{i=1}^2\left[\left\|\left.\frac{\partial\xi_i}{\partial t}\right|_{\hat{\bs{x}}}^h\right\|_{0,\Omega_i^t}^2+\frac{d}{dt}\|\nabla\xi_i\|_{0,\Omega_i^t}^2\right]\le C\sum_{i=1}^2\left[\|\nabla\xi_i\|_{0,\Omega_i^t}^2+\|\eta_i\|_{0,\Omega_i^t}^2+\left\|\left.\frac{\partial\eta_i}{\partial t}\right|_{\hat{\bs{x}}}^h\right\|_{0,\Omega_i^t}^2\right].
\end{equation*}
Integrating both sides of the above inequality in time over $[0,T]$, and applying the Poincaré inequality and the Grönwall's inequality, together with the initial condition $\bs{u}_{h,i}^0=\tilde{\bs{u}}_i^0$, we obtain
\begin{equation}
    \label{eq:semi-errest3}
    \begin{aligned}
        &\ \sum_{i=1}^2\left[\left\|\left.\frac{\partial\xi_i}{\partial t}\right|_{\hat{\bs{x}}}^h\right\|_{L^2(0,T;(L^2(\Omega_i^t))^d)}+\|\xi_i\|_{L^\infty(0,T;(H^1(\Omega_i^t))^d)}\right]\\
        \le&\ C\sum_{i=1}^2\left[\|\eta_i\|_{L^2(0,T;(L^2(\Omega_i^t))^d)}+\left\|\left.\frac{\partial\eta_i}{\partial t}\right|_{\hat{\bs{x}}}^h\right\|_{L^2(0,T;(L^2(\Omega_i^t))^d)}\right].
    \end{aligned}
\end{equation}
Moreover, integrating both sides of \eqref{eq:semi-errest2} in time over $[0,T]$, and taking $p_{h,i}^0=\tilde{p}_i^0$, we obtain
\begin{equation}
    \label{eq:semi-errest4}
    \sum_{i=1}^2\|\phi_i\|_{L^2(0,T;L^2(\Omega_i^t))}\le C\sum_{i=1}^2\left[\|\eta_i\|_{L^2(0,T;(L^2(\Omega_i^t))^d)}+\left\|\left.\frac{\partial\eta_i}{\partial t}\right|_{\hat{\bs{x}}}^h\right\|_{L^2(0,T;(L^2(\Omega_i^t))^d)}\right].
\end{equation}
Finally, by using the triangle inequality, Lemmas \ref{lem:semi1}
and \ref{lem:semi4}, together with \eqref{eq:semi-errest3} and
\eqref{eq:semi-errest4}, we conclude that
\begin{align*}
    &\ \sum_{i=1}^2\left[\|\bs{u}_i-\bs{u}_{h,i}\|_{L^\infty(0,T;(L^2(\Omega_i^t))^d)}+\left\|\left.\frac{\partial\bs{u}_i}{\partial t}\right|_{\hat{\bs{x}}}^h-\left.\frac{\partial\bs{u}_{h,i}}{\partial t}\right|_{\hat{\bs{x}}}^h\right\|_{L^2(0,T;(L^2(\Omega_i^t))^d)}\right]\\
    =&\ \sum_{i=1}^2\left[\|\eta_i+\xi_i\|_{L^\infty(0,T;(L^2(\Omega_i^t))^d)}+\left\|\left.\frac{\partial(\eta_i+\xi_i)}{\partial t}\right|_{\hat{\bs{x}}}^h\right\|_{L^2(0,T;(L^2(\Omega_i^t))^d)}\right]\\
    \le&\ C\sum_{i=1}^2\left[\|\eta_i\|_{L^\infty(0,T;(L^2(\Omega_i^t))^d)}+\|\eta_i\|_{L^2(0,T;(L^2(\Omega_i^t))^d)}+\left\|\left.\frac{\partial\eta_i}{\partial t}\right|_{\hat{\bs{x}}}^h\right\|_{L^2(0,T;(L^2(\Omega_i^t))^d)}\right]\\
    \le&\ Ch^{k+1}\sum_{i=1}^2\left[\|\bs{u}_i\|_{L^\infty(0,T;(H^{k+1}(\Omega_i^t))^d)}+\|\bs{u}_i\|_{L^2(0,T;(W^{2,\infty}(\Omega_i^t))^d)}+\left\|\left.\frac{\partial\bs{u}_i}{\partial t}\right|_{\hat{\bs{x}}}\right\|_{L^2(0,T;(H^{k+1}(\Omega_i^t))^d)}\right.\\
    &\qquad\qquad\quad\ +\left.\|p_i\|_{L^\infty(0,T;H^k(\Omega_i^t))}+\|p_i\|_{L^2(0,T;W^{1,\infty}(\Omega_i^t))}+\left\|\left.\frac{\partial p_i}{\partial t}\right|_{\hat{\bs{x}}}\right\|_{L^2(0,T;H^k(\Omega_i^t))}\right],
\end{align*}
and
\begin{align*}
    &\ \sum_{i=1}^2\left[\|\bs{u}_i-\bs{u}_{h,i}\|_{L^\infty(0,T;(H^1(\Omega_i^t))^d)}+\|p_i-p_{h,i}\|_{L^2(0,T;L^2(\Omega_i^t))}\right]\\
    =&\ \sum_{i=1}^2\left[\|\eta_i+\xi_i\|_{L^\infty(0,T;(H^1(\Omega_i^t))^d)}+\|\delta_i+\phi_i\|_{L^2(0,T;L^2(\Omega_i^t))}\right]\\
    \le&\ C\sum_{i=1}^2\left[\|\eta_i\|_{L^\infty(0,T;(H^1(\Omega_i^t))^d)}+\|\eta_i\|_{L^2(0,T;(L^2(\Omega_i^t))^d)}+\left\|\left.\frac{\partial\eta_i}{\partial t}\right|_{\hat{\bs{x}}}^h\right\|_{L^2(0,T;(L^2(\Omega_i^t))^d)}+\|\delta_i\|_{L^2(0,T;L^2(\Omega_i^t))}\right]\\
    \le&\ Ch^k\sum_{i=1}^2\left[\|\bs{u}_i\|_{L^\infty(0,T;(H^{k+1}(\Omega_i^t))^d)}+\|\bs{u}_i\|_{L^2(0,T;(W^{2,\infty}(\Omega_i^t))^d)}+\left\|\left.\frac{\partial\bs{u}_i}{\partial t}\right|_{\hat{\bs{x}}}\right\|_{L^2(0,T;(H^{k+1}(\Omega_i^t))^d)}\right.\\
    &\qquad\qquad\ +\left.\|p_i\|_{L^\infty(0,T;H^k(\Omega_i^t))}+\|p_i\|_{L^2(0,T;W^{1,\infty}(\Omega_i^t))}+\left\|\left.\frac{\partial p_i}{\partial t}\right|_{\hat{\bs{x}}}\right\|_{L^2(0,T;H^k(\Omega_i^t))}\right],
\end{align*}
which yields the error estimate \eqref{semi-errest}.
\end{proof}

\section{Fully discrete ALE-finite element approximation}\label{sec:full}

Let $\Delta t>0$ be the time step and $t^n=n\Delta t$ for
$n=0,\cdots,N$ such that $T=N\Delta t$. For $n=0,\cdots,N$ and
$i=1,2$, we denote $\Omega_i^n=\Omega_i^{t^n}$,
$\bs{U}_h^n=\bs{U}_h^{t^n}$, $Q_h^n=Q_h^{t^n}$,
$\varphi^n(\bs{x}_i)=\varphi(\bs{x}_i(\hat{\bs{x}}_i,t^n),t^n)
=\hat{\varphi}^n(\hat{\bs{x}}_i)\circ\big(\bs{X}_{h,i}^{n}\big)^{-1}$,
$\bs{X}_{h,i}^{n+1,n}=\bs{X}_{h,i}^n\circ\big(\bs{X}_{h,i}^{n+1}\big)^{-1}$.
With the above notations, we introduce the following fully discrete,
backward Euler ALE-FEM for \eqref{weak1}: for $n=0,\cdots,N-1$, find
$\big(\bs{u}_{h,1}^{n+1},\bs{u}_{h,2}^{n+1}\big)\in\bs{U}_h^{n+1}$
and $\big(p_{h,1}^{n+1},p_{h,2}^{n+1}\big)\in Q_h^{n+1}$ such that
\begin{eqnarray}
        &&\sum_{i=1}^2\left[\left(\frac{\bs{u}_{h,i}^{n+1}-\bs{u}_{h,i}^n\circ\bs{X}_{h,i}^{n+1,n}}{\Delta t},\bs{v}_{h,i}^{n+1}\right)_{\Omega_i^{n+1}}+\left(\mu_i\nabla\bs{u}_{h,i}^{n+1},\nabla\bs{v}_{h,i}^{n+1}\right)_{\Omega_i^{n+1}}-\left(\big(\bs{w}_{h,i}^{n+1}\cdot\nabla\big)\bs{u}_{h,i}^{n+1},\bs{v}_{h,i}^{n+1}\right)_{\Omega_i^{n+1}}\right.\notag\\
        &&-\left(p_{h,i}^{n+1},\nabla\cdot\bs{v}_{h,i}^{n+1}\right)_{\Omega_i^{n+1}}\Bigg]=\sum_{i=1}^2\left(\bs{f}_i^{n+1},\bs{v}_{h,i}^{n+1}\right)_{\Omega_i^{n+1}}+\left\langle\bs{g}^{n+1},\bs{v}_{h,1}^{n+1}\right\rangle_{\Gamma^{n+1}},\ \forall\,\big(\bs{v}_{h,1}^{n+1},\bs{v}_{h,2}^{n+1}\big)\in\bs{U}_h^{n+1},\label{full1}\\
&&\sum_{i=1}^2\left(\nabla\cdot\bs{u}_{h,i}^{n+1},q_{h,i}^{n+1}\right)_{\Omega_i^{n+1}}=0,\
\forall\,\big(q_{h,1}^{n+1},q_{h,2}^{n+1}\big)\in
Q_h^{n+1}.\label{full2}
\end{eqnarray}

\begin{lemma}[\cite{san2009convergence,lan2020monolithic}]
\label{lem:full0}
For each $i=1,2$, suppose that $\bs{F}_{h,i}^t$ is the Jacobian matrix of the ALE mapping $\bs{X}_{h,i}^t$ with its Jacobian $J_{h,i}^t=\det(\bs{F}_{h,i}^t)$ for $t\in[0,T]$, then the following bounds hold
\begin{equation*}
    0<C_1\le\big\|J_{h,i}^t\big\|_{L^\infty(\Omega_i^0)}\le C_2,\qquad C_2^{-1}\le\big\|(J_{h,i}^t)^{-1}\big\|_{L^\infty(\Omega_i^t)}\le C_1^{-1},
\end{equation*}
\begin{equation*}
    0<C_1\le\big\|\bs{F}_{h,i}^t\big\|_{(L^\infty(\Omega_i^0))^{d\times d}}\le C_2,\qquad C_2^{-1}\le\big\|(\bs{F}_{h,i}^t)^{-1}\big\|_{(L^\infty(\Omega_i^t))^{d\times d}}\le C_1^{-1},
\end{equation*}
where $C_1,C_2$ are constants depending only on the continuous ALE mapping $\bs{X}_i^t$ ($i=1,2$).
\end{lemma}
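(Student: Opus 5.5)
The plan is to reduce every bound to properties of the continuous ALE mapping $\bs{X}_i^t$ together with the approximation estimate \eqref{X-conv2}. Write $\bs{F}_i^t=\nabla_{\hat{\bs{x}}}\bs{X}_i^t$ and $J_i^t=\det\bs{F}_i^t$. Since $\bs{X}_i^t\in H^1(0,T;(W^{2,\infty}(\Omega_i^0))^d)$, and both $\bs{X}_i^t$ and $(\bs{X}_i^t)^{-1}$ are Lipschitz for every $t$, the continuous quantities are uniformly controlled on $[0,T]$:
\begin{equation*}
0<c_1\le J_i^t\le c_2,\qquad \|\bs{F}_i^t\|_{L^\infty}+\|(\bs{F}_i^t)^{-1}\|_{L^\infty}\le c_3 .
\end{equation*}
Here the time embedding $H^1(0,T)\subset C([0,T])$ gives continuity in $t$, and compactness of $[0,T]$ turns the pointwise positivity of $J_i^t$ into a uniform lower bound. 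I would state this first as an elementary fact about the exact mapping.

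Next I transfer these bounds to the discrete mapping by perturbation. From \eqref{X-conv2},
\begin{equation*}
\|\bs{F}_i^t-\bs{F}_{h,i}^t\|_{L^\infty(\Omega_i^0)}\le Ch|\ln h|\,\|\bs{X}_i^t\|_{W^{2,\infty}},
\end{equation*}
and the right-hand side tends to $0$ uniformly in $t$. Two consequences follow.

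\begin{itemize}
\item The determinant is a polynomial in the entries, hence locally Lipschitz on bounded sets. This gives $|J_{h,i}^t-J_i^t|\le C'h|\ln h|$, so $J_{h,i}^t\in[c_1/2,\,2c_2]$ once $h\le h_0$.
\item Write $\bs{F}_{h,i}^t=\bs{F}_i^t\bigl(\bs{I}+(\bs{F}_i^t)^{-1}(\bs{F}_{h,i}^t-\bs{F}_i^t)\bigr)$. When $c_3C h|\ln h|\le 1/2$, a Neumann series bounds $(\bs{F}_{h,i}^t)^{-1}$ by $2c_3$. The upper bound on $\bs{F}_{h,i}^t$ is immediate.
\end{itemize}

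The lower bounds on the norms of $\bs{F}_{h,i}^t$ and $J_{h,i}^t$, and the upper bounds on the inverses, then follow from these, together with $1\le\|\bs{F}\|\,\|\bs{F}^{-1}\|$.

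Bounds stated on $\Omega_i^t$ rather than $\Omega_i^0$ come from composing with $(\bs{X}_{h,i}^t)^{-1}$, which is a bijection, so $L^\infty$ norms are unchanged. For the finitely many coarse meshes $h>h_0$, I would use the standing assumption that $\bs{X}_{h,i}^t$ is invertible and the mesh stays shape-regular, cf.\ \eqref{w-bound}. On each element the map is then a nondegenerate polynomial, and compactness in $t$ gives constants depending only on the data. Taking minima and maxima yields $C_1,C_2$.

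The main obstacle is uniformity in $t$ of the lower bound on $J_{h,i}^t$. For this I would rely on the factor $h|\ln h|$ in \eqref{X-conv2} being uniform in $t$, which holds because $\sup_t\|\bs{X}_i^t\|_{W^{2,\infty}}<\infty$. An alternative route is to integrate $\partial_tJ_{h,i}=J_{h,i}(\nabla\cdot\bs{w}_{h,i})$ from \eqref{identity-F}, using \eqref{w-bound}:
\begin{equation*}
J_{h,i}^t=\exp\Bigl(\int_0^t\nabla\cdot\bs{w}_{h,i}\,ds\Bigr)J_{h,i}^0 .
\end{equation*}
Since $J_{h,i}^0=1$, this gives $e^{-CT}\le J_{h,i}^t\le e^{CT}$ directly. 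I would present this Grönwall argument as the primary proof of the Jacobian bounds, since it avoids any smallness condition on $h$. A similar ODE, $\partial_t\bs{F}_{h,i}=(\nabla\bs{w}_{h,i}\circ\bs{X}_{h,i}^t)\bs{F}_{h,i}$, with the analogous one for the inverse, gives exponential bounds on $\bs{F}_{h,i}^t$ and $(\bs{F}_{h,i}^t)^{-1}$.
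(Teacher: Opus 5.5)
The paper states this lemma without proof and cites earlier ALE work for it. Your perturbation argument is correct for $h\le h_0$, with $h_0,C_1,C_2$ depending only on $\bs{X}_i^t$ and the mesh family, and it is the standard route for this result; it should be your proof. The steps are: uniform bounds for $\bs{F}_i^t$, $(\bs{F}_i^t)^{-1}$ and $J_i^t$ from $\bs{X}_i^t\in C([0,T];(W^{2,\infty}(\Omega_i^0))^d)$ plus compactness of $[0,T]$; then $\|\bs{F}_i^t-\bs{F}_{h,i}^t\|_{L^\infty(\Omega_i^0)}\le Ch|\ln h|$ from \eqref{X-conv2}; local Lipschitz continuity of $\det$; a Neumann series; and transfer to $\Omega_i^t$ by composition.

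The gap is your decision to make the Gr\"onwall argument primary on the grounds that it ``avoids any smallness condition on $h$.'' Its only input is \eqref{w-bound}, which bounds the Eulerian gradient
\begin{equation*}
\nabla\bs{w}_{h,i}\circ\bs{X}_{h,i}^t=\nabla_{\hat{\bs{x}}}\big(\partial_t\bs{X}_{h,i}^t\big)\,(\bs{F}_{h,i}^t)^{-1}.
\end{equation*}
Approximation theory on the fixed mesh $\mathcal{T}_h^0$ controls only $\nabla_{\hat{\bs{x}}}(\partial_t\bs{X}_{h,i}^t)$. Passing to \eqref{w-conv2}--\eqref{w-bound} on $\Omega_i^t$ needs a uniform bound on $(\bs{F}_{h,i}^t)^{-1}$, which is the lemma itself.

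Staying in the reference frame does not rescue this. There $\partial_t(\bs{F}_{h,i}^t)^{-1}=-(\bs{F}_{h,i}^t)^{-1}\nabla_{\hat{\bs{x}}}(\partial_t\bs{X}_{h,i}^t)(\bs{F}_{h,i}^t)^{-1}$, a Riccati inequality $y'\le Cy^2$. It controls the inverse only on a time interval of length of order $1/C$. A bootstrap on $\|(\bs{F}_{h,i}^t)^{-1}\|\le M$ also fails, since it would need $e^{CMT}\le M/2$. Some comparison with the nondegenerate exact map, and hence smallness of $h|\ln h|$, is unavoidable.

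Within the paper's ordering, the Gr\"onwall computation is a valid consequence of \eqref{w-bound} and a useful consistency check. It cannot justify the lemma, though, and the smallness condition is merely hidden inside \eqref{w-bound}. The same objection applies to your ODEs for $\bs{F}_{h,i}^t$ and its inverse.

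Your treatment of coarse meshes $h>h_0$ is also not a proof. Invertibility of a piecewise polynomial map of degree $k\ge2$ does not force a nonvanishing Jacobian; for example, $x\mapsto x^2$ on $[0,1]$ is injective with zero derivative at $0$. So nondegeneracy would have to be assumed, and \eqref{w-bound} does not supply it. Any constants obtained this way would depend on those particular discrete maps, not only on $\bs{X}_i^t$ as the statement claims. ``Finitely many'' coarse meshes also presupposes a discrete sequence of mesh sizes. Either state and prove the lemma for $h\le h_0$, which is all the later asymptotic estimates use, or add nondegeneracy of $\bs{X}_{h,i}^t$ for coarse $h$ as an explicit hypothesis.
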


\subsection{Error analysis of the fully discrete scheme}

\begin{theorem}
Suppose that $((\bs{u}_1,\bs{u}_2),(p_1,p_2))$ is the solution to
\eqref{weak1} satisfying the regularity assumptions
\eqref{reg1}-\eqref{reg5}, and
$\big(\big(\bs{u}_{h,1}^{n+1},\bs{u}_{h,2}^{n+1}\big),\big(p_{h,1}^{n+1},p_{h,2}^{n+1}\big)\big)$
($n=1,\cdots,N$) is the solution to \eqref{full1}-\eqref{full2},
then we have the following error estimate
\begin{align}
    \label{full-errest1}
    &\begin{aligned}
        &\ \sum_{i=1}^2\left[\left\|\bs{u}_i^N-\bs{u}_{h,i}^N\right\|_{0,\Omega_i^N}+\left(\Delta t\sum_{n=1}^N\left\|\partial_t(\bs{u}_i^n-\bs{u}_{h,i}^n)\right\|_{0,\Omega_i^n}^2\right)^\frac{1}{2}\right]\\
        \le&\ C(h^{k+1}+\Delta t)\sum_{i=1}^2\left[\|\bs{u}_i\|_{L^\infty(0,T;(H^{k+1}(\Omega_i^t))^d)}+\|\bs{u}_i\|_{L^2(0,T;(W^{2,\infty}(\Omega_i^t))^d)}+\left\|\left.\frac{\partial\bs{u}_i}{\partial t}\right|_{\hat{\bs{x}}}\right\|_{L^2(0,T;(H^{k+1}(\Omega_i^t))^d)}\right.\\
        &\qquad\qquad\qquad\qquad\ +\|p_i\|_{L^\infty(0,T;H^k(\Omega_i^t))}+\|p_i\|_{L^2(0,T;W^{1,\infty}(\Omega_i^t))}+\left\|\left.\frac{\partial p_i}{\partial t}\right|_{\hat{\bs{x}}}\right\|_{L^2(0,T;H^k(\Omega_i^t))}\\
        &\qquad\qquad\qquad\qquad\ +\left.\left\|\left.\frac{\partial^2\bs{u}_i}{\partial t^2}\right|_{\hat{\bs{x}}}\right\|_{L^2(0,T;(L^2(\Omega_i^t))^d)}\right],
    \end{aligned}\\
    \label{full-errest2}
    &\begin{aligned}
        &\ \sum_{i=1}^2\left[\left\|\bs{u}_i^N-\bs{u}_{h,i}^N\right\|_{1,\Omega_i^N}+\left(\Delta t\sum_{n=1}^N\left\|p_i^n-p_{h,i}^n\right\|_{0,\Omega_i^n}^2\right)^\frac{1}{2}\right]\\
        \le&\ C(h^k+\Delta t)\sum_{i=1}^2\left[\|\bs{u}_i\|_{L^\infty(0,T;(H^{k+1}(\Omega_i^t))^d)}+\|\bs{u}_i\|_{L^2(0,T;(W^{2,\infty}(\Omega_i^t))^d)}+\left\|\left.\frac{\partial\bs{u}_i}{\partial t}\right|_{\hat{\bs{x}}}\right\|_{L^2(0,T;(H^{k+1}(\Omega_i^t))^d)}\right.\\
        &\qquad\qquad\qquad\quad\ +\|p_i\|_{L^\infty(0,T;H^k(\Omega_i^t))}+\|p_i\|_{L^2(0,T;W^{1,\infty}(\Omega_i^t))}+\left\|\left.\frac{\partial p_i}{\partial t}\right|_{\hat{\bs{x}}}\right\|_{L^2(0,T;H^k(\Omega_i^t))}\\
        &\qquad\qquad\qquad\quad\ +\left.\left\|\left.\frac{\partial^2\bs{u}_i}{\partial t^2}\right|_{\hat{\bs{x}}}\right\|_{L^2(0,T;(L^2(\Omega_i^t))^d)}\right],
    \end{aligned}
\end{align}
where $\partial_t\bs{\phi}_i^n=\dfrac{\bs{\phi}_i^n-\bs{\phi}_i^{n-1}\circ\bs{X}_{h,i}^{n,n-1}}{\Delta t}$.
\end{theorem}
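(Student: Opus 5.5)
We mimic the semi-discrete argument at the discrete time levels. At each $t^n$ we split the error as $\bs{u}_i^n-\bs{u}_{h,i}^n=\eta_i^n+\xi_i^n$ and $p_i^n-p_{h,i}^n=\delta_i^n+\phi_i^n$. Here $\eta_i^n=\bs{u}_i^n-\tilde{\bs{u}}_i^n$ and $\delta_i^n=p_i^n-\tilde p_i^n$ are given by the $H^1$-projection \eqref{proj1}-\eqref{proj2} at $t=t^n$. The remaining parts are $\xi_i^n=\tilde{\bs{u}}_i^n-\bs{u}_{h,i}^n\in\bs{U}_h^n$ and $\phi_i^n=\tilde p_i^n-p_{h,i}^n$.

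\textbf{Error equation.} We evaluate \eqref{weak1} at $t^{n+1}$ against $\bs{v}_{h,i}^{n+1}$. We replace $\frac{\partial\bs{u}_i}{\partial t}|_{\hat{\bs{x}}}^h$ by $\partial_t\bs{u}_i^{n+1}$, producing a truncation term $R_i^{n+1}=\frac{\partial\bs{u}_i}{\partial t}|_{\hat{\bs{x}}}^h(t^{n+1})-\partial_t\bs{u}_i^{n+1}$. Because $\bs{u}_i^n\circ\bs{X}_{h,i}^{n+1,n}$ is the value at $t^n$ along the discrete ALE trajectory, Taylor's formula in the reference frame together with Lemma \ref{lem:full0} gives
\[
\Delta t\sum_n\|R_i^{n}\|_{0,\Omega_i^n}^2\le C\Delta t^2\Big\|\frac{\partial^2\bs{u}_i}{\partial t^2}\Big|_{\hat{\bs{x}}}\Big\|_{L^2(0,T;L^2)}^2,
\]
up to $h$-small corrections from $\bs{w}-\bs{w}_h$. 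Subtracting \eqref{full1}-\eqref{full2} and using \eqref{proj1}-\eqref{proj2} at $t^{n+1}$ yields
\[
\sum_i\Big[(\partial_t\xi_i^{n+1},\bs{v}_{h,i})+(\mu_i\nabla\xi_i^{n+1},\nabla\bs{v}_{h,i})-((\bs{w}_{h,i}^{n+1}\cdot\nabla)\xi_i^{n+1},\bs{v}_{h,i})-(\phi_i^{n+1},\nabla\cdot\bs{v}_{h,i})\Big]
=\sum_i\Big[(R_i^{n+1}-\partial_t\eta_i^{n+1}+\kappa\eta_i^{n+1},\bs{v}_{h,i})\Big],
\]
with discrete divergence-free constraint $\sum_i(\nabla\cdot\xi_i^{n+1},q_{h,i})=0$.

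\textbf{Energy estimate.} We test with $\bs{v}_{h,i}=\partial_t\xi_i^{n+1}\in\bs{U}_h^{n+1}$; this lies in $\bs{U}_h^{n+1}$ because $\xi_i^n\circ\bs{X}_{h,i}^{n+1,n}$ is the same nodal function transported to the new mesh. The term that needs care is $(\mu_i\nabla\xi^{n+1},\nabla\partial_t\xi^{n+1})_{\Omega^{n+1}}$. We write it as $\frac{1}{2\Delta t}\big(\|\nabla\xi^{n+1}\|^2_{\Omega^{n+1}}-\|\nabla\xi^n\|^2_{\Omega^n}\big)$ plus a nonnegative term plus a geometric defect. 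The defect comes from comparing $\|\nabla(\xi^n\circ\bs{X}^{n+1,n})\|_{\Omega^{n+1}}$ with $\|\nabla\xi^n\|_{\Omega^n}$. Pulling back to $\Omega_i^0$ and using $\|\bs{F}_h^{n+1}-\bs{F}_h^n\|_{L^\infty}+\|J_h^{n+1}-J_h^n\|_{L^\infty}\le C\Delta t$ (from \eqref{w-bound} and Lemma \ref{lem:full0}), the defect is bounded by $C\|\nabla\xi^n\|^2$.

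The pressure term $(\phi^{n+1},\nabla\cdot\partial_t\xi^{n+1})$ does not vanish, since $\xi^n\circ\bs{X}^{n+1,n}$ is not discretely divergence-free on $\Omega^{n+1}$. Instead it equals $-\frac{1}{\Delta t}(\phi^{n+1},\nabla\cdot(\xi^n\circ\bs{X}^{n+1,n}))_{\Omega^{n+1}}$. We subtract the vanishing quantity $\frac1{\Delta t}(\phi^n\circ\ldots,\nabla\cdot\xi^n)_{\Omega^n}$ after mapping both to $\Omega^0$, which gives $\le C\|\phi^{n+1}\|\,\|\nabla\xi^n\|$. This is the discrete analogue of $H_6+H_7$.

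We bound $\|\phi^{n+1}\|$ via the discrete inf-sup condition and the error equation, as in \eqref{eq:semi-errest2}:
\[
\|\phi^{n+1}\|\le C\big(\|\partial_t\xi^{n+1}\|+\|\nabla\xi^{n+1}\|+\|R^{n+1}\|+\|\partial_t\eta^{n+1}\|+\|\eta^{n+1}\|\big).
\]
Young's inequality then absorbs $\varepsilon\|\partial_t\xi^{n+1}\|^2$. Summing over $n$ and applying the discrete Gr\"onwall lemma with $\xi^0=0$ (taking $\bs{u}_h^0=\tilde{\bs{u}}^0$) gives
\[
\max_n\|\xi^n\|_1^2+\Delta t\sum_n\big(\|\partial_t\xi^n\|^2+\|\phi^n\|^2\big)\le C\Delta t\sum_n\big(\|R^n\|^2+\|\partial_t\eta^n\|^2+\|\eta^n\|^2\big).
\]

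\textbf{Bounding the projection terms.} We need $\partial_t\eta^n$ to be $O(h^{k+1}+\Delta t)$ in $L^2$. We write
\[
\partial_t\eta^n=\frac1{\Delta t}\int_{t^{n-1}}^{t^n}\frac{\partial\eta}{\partial s}\Big|_{\hat{\bs{x}}}^h\,ds,
\]
understood along discrete trajectories and pulled back to $\Omega^0$. Jensen's inequality and Lemma \ref{lem:semi4} then give $\Delta t\sum_n\|\partial_t\eta^n\|^2\le Ch^{2(k+1)}(\cdots)$. Lemma \ref{lem:semi1} handles $\eta^n$ and $\delta^n$. The triangle inequality then yields \eqref{full-errest1}. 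For \eqref{full-errest2}, the $H^1$ and pressure parts of Lemma \ref{lem:semi1} give rate $h^k$.

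\textbf{Main obstacle.} We expect the hardest step to be the treatment of the moving-mesh composition $\xi^n\circ\bs{X}_{h}^{n+1,n}$ in the gradient and pressure terms, i.e., the discrete Reynolds-transport defects. We would first try to handle them uniformly by pulling everything back to $\Omega_i^0$ and using the $O(\Delta t)$ Lipschitz dependence of $\bs{F}_h$ and $J_h$ in time.
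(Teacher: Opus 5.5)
Your proposal is correct, and its skeleton is the paper's proof. You use the same splitting into $\eta_i,\xi_i,\delta_i,\phi_i$ through the projection \eqref{proj1}--\eqref{proj2} and the same test function $\partial_t\xi_i^{n+1}$. The gradient defect gets the same pull-back treatment via \eqref{w-bound} and Lemma~\ref{lem:full0}, the pressure term the same use of the discrete divergence constraints, and the argument closes with the same inf-sup plus discrete Gr\"onwall step.

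\textbf{Where you differ: the bound on $\Delta t\sum_n\|\partial_t\eta_i^n\|_{0,\Omega_i^n}^2$.} You proceed as follows:
\begin{itemize}
\item You write $\partial_t\eta_i^n$ in the reference frame as $\frac{1}{\Delta t}\int_{t^{n-1}}^{t^n}\frac{\partial\eta_i}{\partial s}\big|_{\hat{\bs{x}}}^h\,ds$.
\item Jensen and the Jacobian bounds then give $\Delta t\sum_n\|\partial_t\eta_i^n\|_{0,\Omega_i^n}^2\le C\int_0^T\|\frac{\partial\eta_i}{\partial t}|_{\hat{\bs{x}}}^h\|_{0,\Omega_i^t}^2\,dt$.
\item You then integrate Lemma~\ref{lem:semi4} in time.
\end{itemize}
The paper proceeds differently:
\begin{itemize}
\item It splits $\partial_t\eta_i^{n+1}$ into the nodal value $(\frac{\partial\eta_i}{\partial t}|_{\hat{\bs{x}}}^h)^{n+1}$ plus a Taylor remainder.
\item It introduces a piecewise-linear-in-time reconstruction of $\tilde{\bs{u}}_i$, so as to identify $\frac{\partial^2\eta_i}{\partial t^2}|_{\hat{\bs{x}}}^h$ with $\frac{\partial^2\bs{u}_i}{\partial t^2}|_{\hat{\bs{x}}}^h$.
\item It applies Lemma~\ref{lem:semi4} at the nodes $t^n$.
\end{itemize}

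Your version is the more economical and more robust of the two:
\begin{itemize}
\item It needs no second time derivative of $\eta_i$.
\item It produces the $L^2(0,T;\cdot)$ norms of the statement directly. The paper instead uses nodal values of quantities that, by \eqref{reg1}--\eqref{reg5}, are only square-integrable in time.
\item It applies Lemma~\ref{lem:semi4} to the actual projection $\tilde{\bs{u}}_i(t)$. The paper applies it to a time-interpolated surrogate, to which that lemma does not directly apply.
\end{itemize}
The paper's route has the merit of reusing verbatim the Taylor-remainder computation it already performs for the consistency error.

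\textbf{One point in your pressure step to state precisely.} The vanishing quantity you compare with must be $\big(\phi_i^{n+1}\circ(\bs{X}_{h,i}^{n+1,n})^{-1},\nabla\cdot\xi_i^n\big)_{\Omega_i^n}$, i.e.\ $\phi_i^{n+1}$ transported back to $\Omega_i^n$.
\begin{itemize}
\item This function lies in $Q_h^n$, because the discrete spaces are push-forwards of a fixed reference space. Hence \eqref{eq:full-erreq2} at level $n$ annihilates it.
\item After pulling back, the remaining difference only involves $\bs{I}-\big(\frac{\partial\bs{x}_i^{n+1}}{\partial\bs{x}_i^n}\big)^{-\mathrm{T}}\frac{J_{h,i}^{n+1}}{J_{h,i}^n}=O(\Delta t)$, which cancels the factor $1/\Delta t$.
\end{itemize}
Your text reads $\phi^n\circ\ldots$. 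If you compared with $(\phi_i^n,\nabla\cdot\xi_i^n)_{\Omega_i^n}$ instead, the difference would contain $\phi_i^{n+1}-\phi_i^n$. That is not $O(\Delta t)$ in any norm your estimate controls, so the $1/\Delta t$ would not be compensated.
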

\begin{proof}
Taking $t=t^{n+1}$, since
$\big(\big(\bs{u}_1^{n+1},\bs{u}_2^{n+1}),(p_1^{n+1},p_2^{n+1}\big)\big)$
is the solution to \eqref{weak1}, it satisfies the following
equation
\begin{eqnarray*}
    &&\sum_{i=1}^2\left[\left(\left(\left.\frac{\partial\bs{u}_i}{\partial t}\right|_{\hat{\bs{x}}}^h\right)^{n+1},\bs{v}_{h,i}^{n+1}\right)_{\Omega_i^{n+1}}+\left(\mu_i\nabla\bs{u}_i^{n+1},\nabla\bs{v}_{h,i}^{n+1}\right)_{\Omega_i^{n+1}}-\left(\big(\bs{w}_{h,i}^{n+1}\cdot\nabla\big)\bs{u}_i^{n+1},\bs{v}_{h,i}^{n+1}\right)_{\Omega_i^{n+1}}\right.\\
    &&\qquad-\left.\left(p_i^{n+1},\nabla\cdot\bs{v}_{h,i}^{n+1}\right)_{\Omega_i^{n+1}}\rule{0ex}{5ex}\right]=\sum_{i=1}^2\left(\bs{f}_i^{n+1},\bs{v}_{h,i}^{n+1}\right)_{\Omega_i^{n+1}}+\left\langle\bs{g}^{n+1},\bs{v}_{h,1}^{n+1}\right\rangle_{\Gamma^{n+1}},\ \forall\,\big(\bs{v}_{h,1}^{n+1},\bs{v}_{h,2}^{n+1}\big)\in\bs{U}_h^{n+1},\\
    &&\sum_{i=1}^2\left(\nabla\cdot\bs{u}_i^{n+1},q_{h,i}^{n+1}\right)_{\Omega_i^{n+1}}=0,\ \forall\,\big(q_{h,1}^{n+1},q_{h,2}^{n+1}\big)\in Q_h^{n+1}.
\end{eqnarray*}
Subtracting \eqref{full1}-\eqref{full2} from the above equations and
employing $H^1$-projection \eqref{proj1}-\eqref{proj2}, we obtain
the error equation
\begin{eqnarray*}
    &&\sum_{i=1}^2\left[\left(\left(\left.\frac{\partial\bs{u}_i}{\partial t}\right|_{\hat{\bs{x}}}^h\right)^{n+1}-\frac{\bs{u}_{h,i}^{n+1}-\bs{u}_{h,i}^n\circ\bs{X}_{h,i}^{n+1,n}}{\Delta t},\bs{v}_{h,i}^{n+1}\right)_{\Omega_i^{n+1}}+\left(\mu_i\nabla\big(\tilde{\bs{u}}_i^{n+1}-\bs{u}_{h,i}^{n+1}\big),\nabla\bs{v}_{h,i}^{n+1}\right)_{\Omega_i^{n+1}}\right.\\
    &&\qquad-\left(\big(\bs{w}_{h,i}^{n+1}\cdot\nabla\big)\big(\tilde{\bs{u}}_i^{n+1}-\bs{u}_{h,i}^{n+1}\big),\bs{v}_{h,i}^{n+1}\right)_{\Omega_i^{n+1}}-\kappa\left(\bs{u}_i^{n+1}-\tilde{\bs{u}}_i^{n+1},\bs{v}_{h,i}^{n+1}\right)_{\Omega_i^{n+1}}\\
    &&\qquad-\left.\left(\tilde{p}_i^{n+1}-p_{h,i}^{n+1},\nabla\cdot\bs{v}_{h,i}^{n+1}\right)_{\Omega_i^{n+1}}\rule{0ex}{5ex}\right]=0,\\
    &&\sum_{i=1}^2\left(\nabla\cdot\big(\tilde{\bs{u}}_i^{n+1}-\bs{u}_{h,i}^{n+1}\big),q_{h,i}^{n+1}\right)_{\Omega_i^{n+1}}=0.
\end{eqnarray*}
Introducing new variables
$\eta_i^{n+1}=\bs{u}_i^{n+1}-\tilde{\bs{u}}_i^{n+1}$,
$\xi_i^{n+1}=\tilde{\bs{u}}_i^{n+1}-\bs{u}_{h,i}^{n+1}$,
$\delta_i^{n+1}=p_i^{n+1}-\tilde{p}_i^{n+1}$,
$\phi_i^{n+1}=\tilde{p}_i^{n+1}-p_{h,i}^{n+1}$, we can rewrite the
above error equations as follows:
\begin{eqnarray}
        &&\sum_{i=1}^2\left[\left(\left(\left.\frac{\partial\bs{u}_i}{\partial t}\right|_{\hat{\bs{x}}}^h\right)^{n+1}-\frac{\bs{u}_i^{n+1}-\bs{u}_i^n\circ\bs{X}_{h,i}^{n+1,n}}{\Delta t},\bs{v}_{h,i}^{n+1}\right)_{\Omega_i^{n+1}}+\left(\frac{\eta_i^{n+1}-\eta_i^n\circ\bs{X}_{h,i}^{n+1,n}}{\Delta t},\bs{v}_{h,i}^{n+1}\right)_{\Omega_i^{n+1}}\right.\notag\\
        &&\qquad+\left(\frac{\xi_i^{n+1}-\xi_i^n\circ\bs{X}_{h,i}^{n+1,n}}{\Delta t},\bs{v}_{h,i}^{n+1}\right)_{\Omega_i^{n+1}}+\left(\mu_i\nabla\xi_i^{n+1},\nabla\bs{v}_{h,i}^{n+1}\right)_{\Omega_i^{n+1}}-\left(\big(\bs{w}_{h,i}^{n+1}\cdot\nabla\big)\xi_i^{n+1},\bs{v}_{h,i}^{n+1}\right)_{\Omega_i^{n+1}}\notag\\
        &&\qquad-\left.\kappa\left(\eta_i^{n+1},\bs{v}_{h,i}^{n+1}\right)_{\Omega_i^{n+1}}-\left(\phi_i^{n+1},\nabla\cdot\bs{v}_{h,i}^{n+1}\right)_{\Omega_i^{n+1}}\rule{0ex}{5ex}\right]=0,
        \label{eq:full-erreq1}\\
&&\sum_{i=1}^2\left(\nabla\cdot\xi_i^{n+1},q_{h,i}^{n+1}\right)_{\Omega_i^{n+1}}=0.
\label{eq:full-erreq2}
\end{eqnarray}

Let
$\bs{v}_{h,i}^{n+1}=\dfrac{\xi_i^{n+1}-\xi_i^n\circ\bs{X}_{h,i}^{n+1,n}}{\Delta
t}$ in \eqref{eq:full-erreq1}, yields
\begin{align}
    &\ \sum_{i=1}^2\left[\left\|\frac{\xi_i^{n+1}-\xi_i^n\circ\bs{X}_{h,i}^{n+1,n}}{\Delta t}\right\|_{0,\Omega_i^{n+1}}^2+\left(\mu_i\nabla\xi_i^{n+1},\nabla\left(\frac{\xi_i^{n+1}-\xi_i^n\circ\bs{X}_{h,i}^{n+1,n}}{\Delta t}\right)\right)_{\Omega_i^{n+1}}\right]\notag\\
    =&\ \sum_{i=1}^2\left[-\left(\left(\left.\frac{\partial\bs{u}_i}{\partial t}\right|_{\hat{\bs{x}}}^h\right)^{n+1}-\frac{\bs{u}_i^{n+1}-\bs{u}_i^n\circ\bs{X}_{h,i}^{n+1,n}}{\Delta t},\frac{\xi_i^{n+1}-\xi_i^n\circ\bs{X}_{h,i}^{n+1,n}}{\Delta t}\right)_{\Omega_i^{n+1}}\right.\notag\\
    &\ -\left(\frac{\eta_i^{n+1}-\eta_i^n\circ\bs{X}_{h,i}^{n+1,n}}{\Delta t},\frac{\xi_i^{n+1}-\xi_i^n\circ\bs{X}_{h,i}^{n+1,n}}{\Delta t}\right)_{\Omega_i^{n+1}}+\left(\big(\bs{w}_{h,i}^{n+1}\cdot\nabla\big)\xi_i^{n+1},\frac{\xi_i^{n+1}-\xi_i^n\circ\bs{X}_{h,i}^{n+1,n}}{\Delta t}\right)_{\Omega_i^{n+1}}\notag\\
    &\ +\left.\kappa\left(\eta_i^{n+1},\frac{\xi_i^{n+1}-\xi_i^n\circ\bs{X}_{h,i}^{n+1,n}}{\Delta t}\right)_{\Omega_i^{n+1}}+\left(\phi_i^{n+1},\nabla\cdot\left(\frac{\xi_i^{n+1}-\xi_i^n\circ\bs{X}_{h,i}^{n+1,n}}{\Delta
    t}\right)\right)_{\Omega_i^{n+1}}\rule{0ex}{5ex}\right]=\sum_{k=1}^5H_k.\label{eq-Full}
\end{align}
Using the Cauchy-Schwarz inequality and the $\varepsilon$-Young's inequality, it follows that
\begin{align*}
    H_1&\le C\sum_{i=1}^2\left\|\left(\left.\frac{\partial\bs{u}_i}{\partial t}\right|_{\hat{\bs{x}}}^h\right)^{n+1}-\frac{\bs{u}_i^{n+1}-\bs{u}_i^n\circ\bs{X}_{h,i}^{n+1,n}}{\Delta t}\right\|_{0,\Omega_i^{n+1}}^2+\varepsilon\sum_{i=1}^2\left\|\frac{\xi_i^{n+1}-\xi_i^n\circ\bs{X}_{h,i}^{n+1,n}}{\Delta t}\right\|_{0,\Omega_i^{n+1}}^2,\\
    H_2&\le C\sum_{i=1}^2\left\|\frac{\eta_i^{n+1}-\eta_i^n\circ\bs{X}_{h,i}^{n+1,n}}{\Delta t}\right\|_{0,\Omega_i^{n+1}}^2+\varepsilon\sum_{i=1}^2\left\|\frac{\xi_i^{n+1}-\xi_i^n\circ\bs{X}_{h,i}^{n+1,n}}{\Delta t}\right\|_{0,\Omega_i^{n+1}}^2,\\
    H_3&\le C\sum_{i=1}^2\left\|\nabla\xi_i^{n+1}\right\|_{0,\Omega_i^{n+1}}^2+\varepsilon\sum_{i=1}^2\left\|\frac{\xi_i^{n+1}-\xi_i^n\circ\bs{X}_{h,i}^{n+1,n}}{\Delta t}\right\|_{0,\Omega_i^{n+1}}^2,\\
    H_4&\le C\sum_{i=1}^2\left\|\eta_i^{n+1}\right\|_{0,\Omega_i^{n+1}}^2+\varepsilon\sum_{i=1}^2\left\|\frac{\xi_i^{n+1}-\xi_i^n\circ\bs{X}_{h,i}^{n+1,n}}{\Delta t}\right\|_{0,\Omega_i^{n+1}}^2.
\end{align*}

We first proceed to estimate $\left(\mu_i\nabla\xi_i^{n+1},\nabla\left(\dfrac{\xi_i^{n+1}-\xi_i^n\circ\bs{X}_{h,i}^{n+1,n}}{\Delta t}\right)\right)_{\Omega_i^{n+1}}$. By using the inequality $-ab\ge-\dfrac{a^2+b^2}{2}$, we obtain
\begin{align*}
    &\ \left(\mu_i\nabla\xi_i^{n+1},\nabla\left(\frac{\xi_i^{n+1}-\xi_i^n\circ\bs{X}_{h,i}^{n+1,n}}{\Delta t}\right)\right)_{\Omega_i^{n+1}}\\
    =&\ \frac{\mu_i}{\Delta t}\left\|\nabla\xi_i^{n+1}\right\|_{0,\Omega_i^{n+1}}^2-\frac{\mu_i}{\Delta t}\left(\nabla\xi_i^{n+1},\nabla\big(\xi_i^n\circ\bs{X}_{h,i}^{n+1,n}\big)\right)_{\Omega_i^{n+1}}\\
    \ge&\ \frac{\mu_i}{\Delta t}\left\|\nabla\xi_i^{n+1}\right\|_{0,\Omega_i^{n+1}}^2-\frac{\mu_i}{2\Delta t}\left\|\nabla\xi_i^{n+1}\right\|_{0,\Omega_i^{n+1}}^2-\frac{\mu_i}{2\Delta t}\left\|\nabla\big(\xi_i^n\circ\bs{X}_{h,i}^{n+1,n}\big)\right\|_{0,\Omega_i^{n+1}}^2\\
    =&\ \frac{\mu_i}{2\Delta t}\left\|\nabla\xi_i^{n+1}\right\|_{0,\Omega_i^{n+1}}^2-\frac{\mu_i}{2\Delta t}\left\|\nabla\big(\xi_i^n\circ\bs{X}_{h,i}^{n+1,n}\big)\right\|_{0,\Omega_i^{n+1}}^2\\
    =&\ \frac{\mu_i}{2\Delta t}\left(\left\|\nabla\xi_i^{n+1}\right\|_{0,\Omega_i^{n+1}}^2-\left\|\nabla\xi_i^n\right\|_{0,\Omega_i^n}^2\right)+\frac{\mu_i}{2\Delta t}\left(\left\|\nabla\xi_i^n\right\|_{0,\Omega_i^n}^2-\left\|\nabla\big(\xi_i^n\circ\bs{X}_{h,i}^{n+1,n}\big)\right\|_{0,\Omega_i^{n+1}}^2\right).
\end{align*}
For the second term on the right-hand side of the above expression,
we have
\begin{align*}
    &\ \frac{\mu_i}{2\Delta t}\left(\left\|\nabla\xi_i^n\right\|_{0,\Omega_i^n}^2-\left\|\nabla\big(\xi_i^n\circ\bs{X}_{h,i}^{n+1,n}\big)\right\|_{0,\Omega_i^{n+1}}^2\right)\\
    =&\ -\frac{\mu_i}{2\Delta t}\int_{t^n}^{t^{n+1}}\frac{d}{dt}\left(\int_{\Omega_i^t}\left(\nabla_{\bs{x}_i^t}\big(\xi_i^n\circ\bs{X}_{h,i}^{t,n}\big)\right)^2\,d\bs{x}\right)\,dt\\
    =&\ -\frac{\mu_i}{2\Delta t}\int_{t^n}^{t^{n+1}}\frac{d}{dt}\left(\int_{\Omega_i^0}\left(\nabla_{\hat{\bs{x}}_i}\hat{\xi}_i^n\left(\frac{\partial\bs{x}_i^t}{\partial\hat{\bs{x}}_i}\right)^{-1}\right)^2J_{h,i}^t\,d\hat{\bs{x}}\right)\,dt\\
    =&\ -\frac{\mu_i}{2\Delta t}\int_{t^n}^{t^{n+1}}\int_{\Omega_i^0}\left(2\left(\nabla_{\hat{\bs{x}}_i}\hat{\xi}_i^n\left(\frac{\partial\bs{x}_i^t}{\partial\hat{\bs{x}}_i}\right)^{-1}\right):\left(-\nabla_{\hat{\bs{x}}_i}\hat{\xi}_i^n\left(\frac{\partial\bs{x}_i^t}{\partial\hat{\bs{x}}_i}\right)^{-1}\left(\frac{\partial\bs{w}_{h,i}}{\partial\hat{\bs{x}}_i}\right)\left(\frac{\partial\bs{x}_i^t}{\partial\hat{\bs{x}}_i}\right)^{-1}\right)J_{h,i}^t\right.\\
    &\qquad\qquad\qquad\qquad\ \ +\left.\left(\nabla_{\hat{\bs{x}}_i}\hat{\xi}_i^n\left(\frac{\partial\bs{x}_i^t}{\partial\hat{\bs{x}}_i}\right)^{-1}\right)^2J_{h,i}^t\big(\nabla_{\bs{x}_i^t}\cdot\bs{w}_{h,i}\big)\right)\,d\hat{\bs{x}}\,dt\qquad\qquad\qquad\qquad\qquad \text{(using \eqref{identity-F})}\\
    =&\ -\frac{\mu_i}{2\Delta t}\int_{t^n}^{t^{n+1}}\int_{\Omega_i^n}\left[2\left(\nabla_{\bs{x}_i^n}\xi_i^n\left(\frac{\partial\bs{x}_i^t}{\partial\bs{x}_i^n}\right)^{-1}\right):\left(-\nabla_{\bs{x}_i^n}\xi_i^n\left(\frac{\partial\bs{x}_i^t}{\partial\bs{x}_i^n}\right)^{-1}\left(\frac{\partial\bs{w}_{h,i}}{\partial\bs{x}_i^n}\right)\left(\frac{\partial\bs{x}_i^t}{\partial\bs{x}_i^n}\right)^{-1}\right)\right.\\
    &\qquad\qquad\qquad\qquad\ \ +\left.\left(\nabla_{\bs{x}_i^n}\xi_i^n\left(\frac{\partial\bs{x}_i^t}{\partial\bs{x}_i^n}\right)^{-1}\right)^2\left(\left(\frac{\partial\bs{w}_{h,i}}{\partial\bs{x}_i^n}\right):\left(\frac{\partial\bs{x}_i^t}{\partial\bs{x}_i^n}\right)^{-\mathrm{T}}\right)\right]\frac{J_{h,i}^t}{J_{h,i}^n}\,d\bs{x}^n\,dt\\
    \ge&\ -\frac{C}{\Delta t}\int_{t^n}^{t^{n+1}}\left\|\nabla\xi_i^n\right\|_{0,\Omega_i^n}^2\,dt\ge
    -C\left\|\nabla\xi_i^n\right\|_{0,\Omega_i^n}^2,\qquad\qquad\qquad\qquad\qquad\quad\qquad\text{(using \eqref{w-bound} and Lemma \ref{lem:full0})}
\end{align*}
thereby leading to
\begin{equation*}
    \left(\mu_i\nabla\xi_i^{n+1},\nabla\left(\frac{\xi_i^{n+1}-\xi_i^n\circ\bs{X}_{h,i}^{n+1,n}}{\Delta t}\right)\right)_{\Omega_i^{n+1}}\ge\frac{\mu_i}{2\Delta t}\left(\left\|\nabla\xi_i^{n+1}\right\|_{0,\Omega_i^{n+1}}^2-\left\|\nabla\xi_i^n\right\|_{0,\Omega_i^n}^2\right)-C\left\|\nabla\xi_i^n\right\|_{0,\Omega_i^n}^2.
\end{equation*}

We next consider the estimate of
$\left\|\left(\left.\dfrac{\partial\bs{u}_i}{\partial
t}\right|_{\hat{\bs{x}}}^h\right)^{n+1}-\dfrac{\bs{u}_i^{n+1}-\bs{u}_i^n\circ\bs{X}_{h,i}^{n+1,n}}{\Delta
t}\right\|_{0,\Omega_i^{n+1}}$. By by the Taylor's expansion,
$\displaystyle\hat{\bs{u}}_i^n=\hat{\bs{u}}_i^{n+1}-\Delta
t\left(\frac{\partial\hat{\bs{u}}_i}{\partial
t}\right)^{n+1}+\int_{t^n}^{t^{n+1}}(t-t^n)\frac{\partial^2\hat{\bs{u}}_i^t}{\partial
t^2}\,dt$, we have
\begin{align*}
    &\ \left\|\left(\left.\frac{\partial\bs{u}_i}{\partial t}\right|_{\hat{\bs{x}}}^h\right)^{n+1}-\frac{\bs{u}_i^{n+1}-\bs{u}_i^n\circ\bs{X}_{h,i}^{n+1,n}}{\Delta t}\right\|_{0,\Omega_i^{n+1}}^2
    =\ \int_{\Omega_i^0}\left(\left(\frac{\partial\hat{\bs{u}}_i}{\partial t}\right)^{n+1}-\frac{\hat{\bs{u}}_i^{n+1}-\hat{\bs{u}}_i^n}{\Delta t}\right)^2J_{h,i}^{n+1}\,d\hat{\bs{x}}\\
    =&\ \frac{1}{(\Delta t)^2}\int_{\Omega_i^0}\left(\int_{t^n}^{t^{n+1}}(t-t^n)\frac{\partial^2\hat{\bs{u}}_i^t}{\partial t^2}\,dt\right)^2J_{h,i}^{n+1}\,d\hat{\bs{x}}\\
    \le&\ \frac{1}{(\Delta t)^2}\int_{\Omega_i^0}\left(\int_{t^n}^{t^{n+1}}(t-t^n)^2\,dt\right)\left(\int_{t^n}^{t^{n+1}}\left(\frac{\partial^2\hat{\bs{u}}_i^t}{\partial t^2}\right)^2\,dt\right)J_{h,i}^{n+1}\,d\hat{\bs{x}}\\
    =&\ \frac{\Delta t}{3}\int_{t^n}^{t^{n+1}}\left(\int_{\Omega_i^0}\left(\frac{\partial^2\hat{\bs{u}}_i^t}{\partial t^2}\right)^2J_{h,i}^{n+1}\,d\hat{\bs{x}}\right)\,dt
    =\ \frac{\Delta t}{3}\int_{t^n}^{t^{n+1}}\left(\int_{\Omega_i^t}\left(\left.\frac{\partial^2\bs{u}_i}{\partial t^2}\right|_{\hat{\bs{x}}}^h\right)^2\frac{J_{h,i}^{n+1}}{J_{h,i}^t}\,d\bs{x}\right)\,dt\\
    \le&\ C\Delta t\left\|\left.\frac{\partial^2\bs{u}_i}{\partial t^2}\right|_{\hat{\bs{x}}}^h\right\|_{L^2(t^n,t^{n+1};(L^2(\Omega_i^t))^d)}^2.
\end{align*}

We then estimate
$\left\|\dfrac{\eta_i^{n+1}-\eta_i^n\circ\bs{X}_{h,i}^{n+1,n}}{\Delta
t}\right\|_{0,\Omega_i^{n+1}}$. Since
$\eta_i(t)=\bs{u}_i(t)-\tilde{\bs{u}}_i(t)$, where
$\tilde{\bs{u}}_i(t)$ is defined as
$\tilde{\bs{u}}_i(t)=\tilde{\bs{u}}_i^n\circ\bs{X}_{h,i}^{t,n}+\dfrac{t-t^n}{\Delta
t}\big(\tilde{\bs{u}}_i^{n+1}\circ\bs{X}_{h,i}^{t,n+1}-\tilde{\bs{u}}_i^n\circ\bs{X}_{h,i}^{t,n}\big)$
for $t\in(t^n,t^{n+1}]$, it then follows that
\begin{equation*}
    \left.\frac{\partial^2\eta_i(t)}{\partial t^2}\right|_{\hat{\bs{x}}}^h=\left.\frac{\partial^2(\bs{u}_i(t)-\tilde{\bs{u}}_i(t))}{\partial t^2}\right|_{\hat{\bs{x}}}^h=\left.\frac{\partial^2\bs{u}_i(t)}{\partial t^2}\right|_{\hat{\bs{x}}}^h.
\end{equation*}
By using the triangle inequality, and following the estimate of $\left\|\left(\left.\dfrac{\partial\bs{u}_i}{\partial t}\right|_{\hat{\bs{x}}}^h\right)^{n+1}-\dfrac{\bs{u}_i^{n+1}-\bs{u}_i^n\circ\bs{X}_{h,i}^{n+1,n}}{\Delta t}\right\|_{0,\Omega_i^{n+1}}$, we obtain
\begin{align*}
    \left\|\frac{\eta_i^{n+1}-\eta_i^n\circ\bs{X}_{h,i}^{n+1,n}}{\Delta t}\right\|_{0,\Omega_i^{n+1}}^2\le&\ \left\|\left(\left.\frac{\partial\eta_i(t)}{\partial t}\right|_{\hat{\bs{x}}}^h\right)^{n+1}-\frac{\eta_i^{n+1}-\eta_i^n\circ\bs{X}_{h,i}^{n+1,n}}{\Delta t}\right\|_{0,\Omega_i^{n+1}}^2+\left\|\left(\left.\frac{\partial\eta_i(t)}{\partial t}\right|_{\hat{\bs{x}}}^h\right)^{n+1}\right\|_{0,\Omega_i^{n+1}}^2\\
    \le&\ C\Delta t\left\|\left.\frac{\partial^2\eta_i(t)}{\partial t^2}\right|_{\hat{\bs{x}}}^h\right\|_{L^2(t^n,t^{n+1};(L^2(\Omega_i^t))^d)}^2+\left\|\left(\left.\frac{\partial\eta_i(t)}{\partial t}\right|_{\hat{\bs{x}}}^h\right)^{n+1}\right\|_{0,\Omega_i^{n+1}}^2\\
    \le&\ C\Delta t\left\|\left.\frac{\partial^2\bs{u}_i(t)}{\partial t^2}\right|_{\hat{\bs{x}}}^h\right\|_{L^2(t^n,t^{n+1};(L^2(\Omega_i^t))^d)}^2+\left\|\left(\left.\frac{\partial\eta_i(t)}{\partial t}\right|_{\hat{\bs{x}}}^h\right)^{n+1}\right\|_{0,\Omega_i^{n+1}}^2.
\end{align*}

We further estimate $\left(\phi_i^{n+1},\nabla\cdot\left(\dfrac{\xi_i^{n+1}-\xi_i^n\circ\bs{X}_{h,i}^{n+1,n}}{\Delta t}\right)\right)_{\Omega_i^{n+1}}$. Since
\begin{align*}
    &\ \left(\phi_i^{n+1},\nabla\cdot\left(\frac{\xi_i^{n+1}-\xi_i^n\circ\bs{X}_{h,i}^{n+1,n}}{\Delta t}\right)\right)_{\Omega_i^{n+1}}\\
    =&\ \frac{1}{\Delta t}\left(\phi_i^{n+1},\nabla\cdot\xi_i^{n+1}\right)_{\Omega_i^{n+1}}-\frac{1}{\Delta t}\left(\phi_i^{n+1},\nabla\cdot\big(\xi_i^n\circ\bs{X}_{h,i}^{n+1,n}\big)\right)_{\Omega_i^{n+1}}\\
    =&\ \frac{1}{\Delta t}\left(\phi_i^{n+1},\nabla_{\bs{x}_i^{n+1}}\cdot\xi_i^{n+1}\right)_{\Omega_i^{n+1}}-\frac{1}{\Delta t}\left(\phi_i^{n+1}\circ\bs{X}_{h,i}^{n,n+1},\nabla_{\bs{x}_i^n}\xi_i^n:\left(\frac{\partial\bs{x}_i^{n+1}}{\partial\bs{x}_i^n}\right)^{-\mathrm{T}}\frac{J_{h,i}^{n+1}}{J_{h,i}^n}\right)_{\Omega_i^n}\\
    =&\ \frac{1}{\Delta t}\left[\left(\phi_i^{n+1},\nabla_{\bs{x}_i^{n+1}}\cdot\xi_i^{n+1}\right)_{\Omega_i^{n+1}}-\left(\phi_i^{n+1}\circ\bs{X}_{h,i}^{n,n+1},\nabla_{\bs{x}_i^n}\cdot\xi_i^n\right)_{\Omega_i^n}\right]\\
    &+\frac{1}{\Delta
    t}\left[\left(\phi_i^{n+1}\circ\bs{X}_{h,i}^{n,n+1},\nabla_{\bs{x}_i^n}\cdot\xi_i^n\right)_{\Omega_i^n}-\left(\phi_i^{n+1}\circ\bs{X}_{h,i}^{n,n+1},\nabla_{\bs{x}_i^n}\xi_i^n:\left(\frac{\partial\bs{x}_i^{n+1}}{\partial\bs{x}_i^n}\right)^{-\mathrm{T}}\frac{J_{h,i}^{n+1}}{J_{h,i}^n}\right)_{\Omega_i^n}\right]\\
    =&\ G_1+G_2.
\end{align*}
For Term $G_1$, since
\begin{equation*}
    \left(\phi_i^{n+1},\nabla_{\bs{x}_i^{n+1}}\cdot\xi_i^{n+1}\right)_{\Omega_i^{n+1}}=\left(\phi_i^{n+1}\circ\bs{X}_{h,i}^{n,n+1},\nabla_{\bs{x}_i^n}\cdot\xi_i^n\right)_{\Omega_i^n}=0
\end{equation*}
according to \eqref{eq:full-erreq2}, we have $G_1=0$. Now we
estimate $G_2$ as follows.
\begin{align*}
G_2 
    =&\ -\frac{1}{\Delta t}\int_{t^n}^{t^{n+1}}\frac{d}{dt}\left(\phi_i^{n+1}\circ\bs{X}_{h,i}^{n,n+1},\nabla_{\bs{x}_i^n}\xi_i^n:\left(\frac{\partial\bs{x}_i^t}{\partial\bs{x}_i^n}\right)^{-\mathrm{T}}\frac{J_{h,i}^t}{J_{h,i}^n}\right)_{\Omega_i^n}\,dt\\
    =&\ -\frac{1}{\Delta t}\int_{t^n}^{t^{n+1}}\left(\phi_i^{n+1}\circ\bs{X}_{h,i}^{n,n+1},\nabla_{\bs{x}_i^n}\xi_i^n:\frac{d}{dt}\left(\left(\frac{\partial\bs{x}_i^t}{\partial\bs{x}_i^n}\right)^{-\mathrm{T}}\frac{J_{h,i}^t}{J_{h,i}^n}\right)\right)_{\Omega_i^n}\,dt\\
    =&\ -\frac{1}{\Delta t}\int_{t^n}^{t^{n+1}}\left(\phi_i^{n+1}\circ\bs{X}_{h,i}^{n,n+1},\nabla_{\bs{x}_i^n}\xi_i^n:\left(-\left(\frac{\partial\bs{x}_i^t}{\partial\bs{x}_i^n}\right)^{-\mathrm{T}}\left(\frac{\partial\bs{w}_{h,i}}{\partial\bs{x}_i^n}\right)^\mathrm{T}\left(\frac{\partial\bs{x}_i^t}{\partial\bs{x}_i^n}\right)^{-\mathrm{T}}\frac{J_{h,i}^t}{J_{h,i}^n}\right.\right.\\
    &\qquad\qquad\qquad\left.\left.+\left(\frac{\partial\bs{x}_i^t}{\partial\bs{x}_i^n}\right)^{-\mathrm{T}}\frac{J_{h,i}^t(\nabla_{\bs{x}_i^t}\cdot\bs{w}_{h,i})}{J_{h,i}^n}\right)\right)_{\Omega_i^n}\,dt\qquad\qquad\qquad\qquad\qquad \text{(using \eqref{identity-F})}\\
    \le&\ \frac{C}{\Delta t}\int_{t^n}^{t^{n+1}}\left\|\phi_i^{n+1}\circ\bs{X}_{h,i}^{n,n+1}\right\|_{0,\Omega_i^n}\left\|\nabla_{\bs{x}_i^n}\xi_i^n\right\|_{0,\Omega_i^n}\,dt\le C\left\|\phi_i^{n+1}\right\|_{0,\Omega_i^{n+1}}\left\|\nabla\xi_i^n\right\|_{0,\Omega_i^n}\\
    \le&\ C\left\|\nabla\xi_i^n\right\|_{0,\Omega_i^n}^2+\varepsilon_\phi\left\|\phi_i^{n+1}\right\|_{0,\Omega_i^{n+1}}^2.
\end{align*}
Hence, it follows that
\begin{equation*}
    \left(\phi_i^{n+1},\nabla\cdot\left(\frac{\xi_i^{n+1}-\xi_i^n\circ\bs{X}_{h,i}^{n+1,n}}{\Delta t}\right)\right)_{\Omega_i^{n+1}}\le C\left\|\nabla\xi_i^n\right\|_{0,\Omega_i^n}^2+\varepsilon_\phi\left\|\phi_i^{n+1}\right\|_{0,\Omega_i^{n+1}}^2.
\end{equation*}
By combining all the above estimates for \eqref{eq-Full} and
choosing a sufficiently small $\varepsilon$, we obtain
\begin{equation}
    \label{eq:full-errest1}
    \begin{aligned}
        &\ \sum_{i=1}^2\left[\left\|\frac{\xi_i^{n+1}-\xi_i^n\circ\bs{X}_{h,i}^{n+1,n}}{\Delta t}\right\|_{0,\Omega_i^{n+1}}^2+\frac{\left\|\nabla\xi_i^{n+1}\right\|_{0,\Omega_i^{n+1}}^2-\left\|\nabla\xi_i^n\right\|_{0,\Omega_i^n}^2}{\Delta t}\right]\\
        \le&\ C\sum_{i=1}^2\left[\left\|\nabla\xi_i^n\right\|_{0,\Omega_i^n}^2+\left\|\nabla\xi_i^{n+1}\right\|_{0,\Omega_i^{n+1}}^2+\left\|\eta_i^{n+1}\right\|_{0,\Omega_i^{n+1}}^2+\left\|\left(\left.\frac{\partial\eta_i}{\partial t}\right|_{\hat{\bs{x}}}^h\right)^{n+1}\right\|_{0,\Omega_i^{n+1}}^2\right.\\
        &\qquad\quad\ +\left.\Delta t\left\|\left.\frac{\partial^2\bs{u}_i}{\partial t^2}\right|_{\hat{\bs{x}}}^h\right\|_{L^2(t^n,t^{n+1};(L^2(\Omega_i^t))^d)}^2\right]+\varepsilon_\phi\sum_{i=1}^2\left\|\phi_i^{n+1}\right\|_{0,\Omega_i^{n+1}}^2.
    \end{aligned}
\end{equation}

To estimate $\left\|\phi_i^{n+1}\right\|_{0,\Omega_i^{n+1}}$, we use the discrete inf-sup condition and the error equation \eqref{eq:full-erreq1}, resulting in
\begin{equation}
    \label{eq:full-errest2}
    \begin{aligned}
        &\ \gamma\left\|\big(\phi_1^{n+1},\phi_2^{n+1}\big)\right\|_0\le\sup_{\big(\bs{v}_{h,1}^{n+1},\bs{v}_{h,2}^{n+1}\big)\in\bs{U}_h^{n+1}}\frac{\displaystyle\sum_{i=1}^2\left(\phi_i^{n+1},\nabla\cdot\bs{v}_{h,i}^{n+1}\right)_{\Omega_i^{n+1}}}{\left\|\big(\bs{v}_{h,1}^{n+1},\bs{v}_{h,2}^{n+1}\big)\right\|_1}\\
        =&\ \sup_{\big(\bs{v}_{h,1}^{n+1},\bs{v}_{h,2}^{n+1}\big)\in\bs{U}_h^{n+1}}\frac{1}{\left\|\big(\bs{v}_{h,1}^{n+1},\bs{v}_{h,2}^{n+1}\big)\right\|_1}\sum_{i=1}^2\left[\left(\left(\left.\frac{\partial\bs{u}_i}{\partial t}\right|_{\hat{\bs{x}}}^h\right)^{n+1}-\frac{\bs{u}_i^{n+1}-\bs{u}_i^n\circ\bs{X}_{h,i}^{n+1,n}}{\Delta t},\bs{v}_{h,i}^{n+1}\right)_{\Omega_i^{n+1}}\right.\\
        &\qquad\qquad\quad+\left(\frac{\eta_i^{n+1}-\eta_i^n\circ\bs{X}_{h,i}^{n+1,n}}{\Delta t},\bs{v}_{h,i}^{n+1}\right)_{\Omega_i^{n+1}}+\left(\frac{\xi_i^{n+1}-\xi_i^n\circ\bs{X}_{h,i}^{n+1,n}}{\Delta t},\bs{v}_{h,i}^{n+1}\right)_{\Omega_i^{n+1}}\\
        &\qquad\qquad\quad+\left(\mu_i\nabla\xi_i^{n+1},\nabla\bs{v}_{h,i}^{n+1}\right)_{\Omega_i^{n+1}}-\left.\left(\big(\bs{w}_{h,i}^{n+1}\cdot\nabla\big)\xi_i^{n+1},\bs{v}_{h,i}^{n+1}\right)_{\Omega_i^{n+1}}-\kappa\left(\eta_i^{n+1},\bs{v}_{h,i}^{n+1}\right)_{\Omega_i^{n+1}}\rule{0ex}{5ex}\right]\\
        \le&\ \sum_{i=1}^2\left[\left\|\left(\left.\frac{\partial\bs{u}_i}{\partial t}\right|_{\hat{\bs{x}}}^h\right)^{n+1}-\frac{\bs{u}_i^{n+1}-\bs{u}_i^n\circ\bs{X}_{h,i}^{n+1,n}}{\Delta t}\right\|_{0,\Omega_i^{n+1}}+\left\|\frac{\eta_i^{n+1}-\eta_i^n\circ\bs{X}_{h,i}^{n+1,n}}{\Delta t}\right\|_{0,\Omega_i^{n+1}}\right.\\
        &\qquad\ +\left.\left\|\frac{\xi_i^{n+1}-\xi_i^n\circ\bs{X}_{h,i}^{n+1,n}}{\Delta t}\right\|_{0,\Omega_i^{n+1}}\right]+C\sum_{i=1}^2\left[\left\|\nabla\xi_i^{n+1}\right\|_{0,\Omega_i^{n+1}}+\left\|\eta_i^{n+1}\right\|_{0,\Omega_i^{n+1}}\right]\\
        \le&\ \sum_{i=1}^2\left[\left\|\frac{\xi_i^{n+1}-\xi_i^n\circ\bs{X}_{h,i}^{n+1,n}}{\Delta t}\right\|_{0,\Omega_i^{n+1}}+\left\|\left(\left.\frac{\partial\eta_i(t)}{\partial t}\right|_{\hat{\bs{x}}}^h\right)^{n+1}\right\|_{0,\Omega_i^{n+1}}\right]\\
        &\qquad\ +C\sum_{i=1}^2\left[\left\|\nabla\xi_i^{n+1}\right\|_{0,\Omega_i^{n+1}}+\left\|\eta_i^{n+1}\right\|_{0,\Omega_i^{n+1}}+(\Delta t)^\frac{1}{2}\left\|\left.\frac{\partial^2\bs{u}_i}{\partial t^2}\right|_{\hat{\bs{x}}}^h\right\|_{L^2(t^n,t^{n+1};(L^2(\Omega_i^t))^d)}\right].
    \end{aligned}
\end{equation}
Substituting \eqref{eq:full-errest2} into \eqref{eq:full-errest1} and choosing a sufficiently small $\varepsilon_\phi$, we obtain
\begin{align*}
    &\ \sum_{i=1}^2\left[\left\|\frac{\xi_i^{n+1}-\xi_i^n\circ\bs{X}_{h,i}^{n+1,n}}{\Delta t}\right\|_{0,\Omega_i^{n+1}}^2+\frac{\left\|\nabla\xi_i^{n+1}\right\|_{0,\Omega_i^{n+1}}^2-\left\|\nabla\xi_i^n\right\|_{0,\Omega_i^n}^2}{\Delta t}\right]\\
    \le&\ C\sum_{i=1}^2\left[\rule{0ex}{5ex}\left\|\nabla\xi_i^n\right\|_{0,\Omega_i^n}^2+\left\|\nabla\xi_i^{n+1}\right\|_{0,\Omega_i^{n+1}}^2+\left\|\eta_i^{n+1}\right\|_{0,\Omega_i^{n+1}}^2\right.\\
    &\qquad\quad+\left.\left\|\left(\left.\frac{\partial\eta_i}{\partial t}\right|_{\hat{\bs{x}}}^h\right)^{n+1}\right\|_{0,\Omega_i^{n+1}}^2+\Delta t\left\|\left.\frac{\partial^2\bs{u}_i}{\partial t^2}\right|_{\hat{\bs{x}}}^h\right\|_{L^2(t^n,t^{n+1};(L^2(\Omega_i^t))^d)}^2\right].
\end{align*}
Multiplying both sides of the above inequality by $\Delta t$, and summing over $n=0,\dots,N-1$, together with the initial condition $\bs{u}_{h,i}^0=\tilde{\bs{u}}_i^0$, we obtain
\begin{equation}
    \label{eq:full-errest3}
    \begin{aligned}
        &\ \sum_{i=1}^2\left[\Delta t\sum_{n=0}^{N-1}\left\|\frac{\xi_i^{n+1}-\xi_i^n\circ\bs{X}_{h,i}^{n+1,n}}{\Delta t}\right\|_{0,\Omega_i^{n+1}}^2+\left\|\nabla\xi_i^N\right\|_{0,\Omega_i^N}^2\right]\\
        \le&\ C\sum_{i=1}^2\left[\Delta t\sum_{n=0}^N\left\|\nabla\xi_i^n\right\|_{0,\Omega_i^n}^2+\Delta t\sum_{n=1}^N\left(\left\|\eta_i^n\right\|_{0,\Omega_i^n}^2+\left\|\left(\left.\frac{\partial\eta_i}{\partial t}\right|_{\hat{\bs{x}}}^h\right)^n\right\|_{0,\Omega_i^n}^2\right)+(\Delta t)^2\left\|\left.\frac{\partial^2\bs{u}_i}{\partial t^2}\right|_{\hat{\bs{x}}}^h\right\|_{L^2(0,T;(L^2(\Omega_i^t))^d)}^2\right].
    \end{aligned}
\end{equation}
Moreover, multiplying both sides of \eqref{eq:full-errest2} by $\Delta t$, summing over $n=0,\dots,N-1$, and taking $p_{h,i}^0=\tilde{p}_i^0$, we obtain
\begin{equation}
    \label{eq:full-errest4}
    \begin{aligned}
        \sum_{i=1}^2\Delta t\sum_{n=1}^N\left\|\phi_i^n\right\|_{0,\Omega_i^n}^2\le&\ C\sum_{i=1}^2\left[\Delta t\sum_{n=0}^{N-1}\left\|\frac{\xi_i^{n+1}-\xi_i^n\circ\bs{X}_{h,i}^{n+1,n}}{\Delta t}\right\|_{0,\Omega_i^{n+1}}^2+\Delta t\sum_{n=1}^N\left\|\nabla\xi_i^n\right\|_{0,\Omega_i^n}^2\right.\\
        &\qquad+\left.\Delta t\sum_{n=1}^N\left(\left\|\eta_i^n\right\|_{0,\Omega_i^n}^2+\left\|\left(\left.\frac{\partial\eta_i}{\partial t}\right|_{\hat{\bs{x}}}^h\right)^n\right\|_{0,\Omega_i^n}^2\right)+(\Delta t)^2\left\|\left.\frac{\partial^2\bs{u}_i}{\partial t^2}\right|_{\hat{\bs{x}}}^h\right\|_{L^2(0,T;(L^2(\Omega_i^t))^d)}^2\right].
    \end{aligned}
\end{equation}
Combining \eqref{eq:full-errest3} and \eqref{eq:full-errest4}, applying the Poincaré inequality and the discrete Grönwall's inequality, we obtain
\begin{equation}
    \label{eq:full-errest5}
    \begin{aligned}
        &\ \sum_{i=1}^2\left[\Delta t\sum_{n=0}^{N-1}\left\|\frac{\xi_i^{n+1}-\xi_i^n\circ\bs{X}_{h,i}^{n+1,n}}{\Delta t}\right\|_{0,\Omega_i^{n+1}}^2+\left\|\xi_i^N\right\|_{1,\Omega_i^N}^2+\Delta t\sum_{n=1}^N\left\|\phi_i^n\right\|_{0,\Omega_i^n}^2\right]\\
        \le&\ C\sum_{i=1}^2\left[\Delta t\sum_{n=1}^N\left(\left\|\eta_i^n\right\|_{0,\Omega_i^n}^2+\left\|\left(\left.\frac{\partial\eta_i}{\partial t}\right|_{\hat{\bs{x}}}^h\right)^n\right\|_{0,\Omega_i^n}^2\right)+(\Delta t)^2\left\|\left.\frac{\partial^2\bs{u}_i}{\partial t^2}\right|_{\hat{\bs{x}}}^h\right\|_{L^2(0,T;(L^2(\Omega_i^t))^d)}^2\right].
    \end{aligned}
\end{equation}
Finally, by using the triangle inequality, Lemmas \ref{lem:semi1}
and \ref{lem:semi4}, together with \eqref{eq:full-errest1} and
\eqref{eq:full-errest5}, we conclude that
\begin{align*}
    &\ \sum_{i=1}^2\left[\left\|\bs{u}_i^N-\bs{u}_{h,i}^N\right\|_{0,\Omega_i^N}+\left(\Delta t\sum_{n=1}^N\left\|\partial_t(\bs{u}_i^n-\bs{u}_{h,i}^n)\right\|_{0,\Omega_i^n}^2\right)^\frac{1}{2}\right]\\
    =&\ \sum_{i=1}^2\left[\left\|\eta_i^N+\xi_i^N\right\|_{0,\Omega_i^N}+\left(\Delta t\sum_{n=1}^N\left\|\partial_t(\eta_i^n+\xi_i^n)\right\|_{0,\Omega_i^n}^2\right)^\frac{1}{2}\right]\\
    \le&\ C\sum_{i=1}^2\left[\left\|\eta_i^N\right\|_{0,\Omega_i^N}+\left(\Delta t\sum_{n=1}^N\left(\left\|\eta_i^n\right\|_{0,\Omega_i^n}^2+\left\|\left(\left.\frac{\partial\eta_i}{\partial t}\right|_{\hat{\bs{x}}}^h\right)^n\right\|_{0,\Omega_i^n}^2\right)\right)^\frac{1}{2}+\Delta t\left\|\left.\frac{\partial^2\bs{u}_i}{\partial t^2}\right|_{\hat{\bs{x}}}^h\right\|_{L^2(0,T;(L^2(\Omega_i^t))^d)}\right]\\
    \le&\ Ch^{k+1}\sum_{i=1}^2\left[\|\bs{u}_i\|_{L^\infty(0,T;(H^{k+1}(\Omega_i^t))^d)}+\|\bs{u}_i\|_{L^2(0,T;(W^{2,\infty}(\Omega_i^t))^d)}+\left\|\left.\frac{\partial\bs{u}_i}{\partial t}\right|_{\hat{\bs{x}}}\right\|_{L^2(0,T;(H^{k+1}(\Omega_i^t))^d)}\right.\\
    &+\left.\|p_i\|_{L^\infty(0,T;H^k(\Omega_i^t))}+\|p_i\|_{L^2(0,T;W^{1,\infty}(\Omega_i^t))}+\left\|\left.\frac{\partial p_i}{\partial t}\right|_{\hat{\bs{x}}}\right\|_{L^2(0,T;H^k(\Omega_i^t))}\right]+C\Delta t\sum_{i=1}^2\left\|\left.\frac{\partial^2\bs{u}_i}{\partial t^2}\right|_{\hat{\bs{x}}}^h\right\|_{L^2(0,T;(L^2(\Omega_i^t))^d)}\\
    \le&\ C(h^{k+1}+\Delta t)\sum_{i=1}^2\left[\|\bs{u}_i\|_{L^\infty(0,T;(H^{k+1}(\Omega_i^t))^d)}+\|\bs{u}_i\|_{L^2(0,T;(W^{2,\infty}(\Omega_i^t))^d)}+\left\|\left.\frac{\partial\bs{u}_i}{\partial t}\right|_{\hat{\bs{x}}}\right\|_{L^2(0,T;(H^{k+1}(\Omega_i^t))^d)}\right.\\
    & +\|p_i\|_{L^\infty(0,T;H^k(\Omega_i^t))}+\|p_i\|_{L^2(0,T;W^{1,\infty}(\Omega_i^t))}+\left\|\left.\frac{\partial p_i}{\partial t}\right|_{\hat{\bs{x}}}\right\|_{L^2(0,T;H^k(\Omega_i^t))}
    +\left.\left\|\left.\frac{\partial^2\bs{u}_i}{\partial t^2}\right|_{\hat{\bs{x}}}\right\|_{L^2(0,T;(L^2(\Omega_i^t))^d)}\right],
\end{align*}
and
\begin{align*}
    &\ \sum_{i=1}^2\left[\left\|\bs{u}_i^N-\bs{u}_{h,i}^N\right\|_{1,\Omega_i^N}+\left(\Delta t\sum_{n=1}^N\left\|p_i^n-p_{h,i}^n\right\|_{0,\Omega_i^n}^2\right)^\frac{1}{2}\right]\\
    =&\ \sum_{i=1}^2\left[\left\|\eta_i^N+\xi_i^N\right\|_{1,\Omega_i^N}+\left(\Delta t\sum_{n=1}^N\left\|\delta_i^n+\phi_i^n\right\|_{0,\Omega_i^n}^2\right)^\frac{1}{2}\right]\\
    \le&\ C\sum_{i=1}^2\left[\left\|\eta_i^N\right\|_{1,\Omega_i^N}+\left(\Delta t\sum_{n=1}^N\left(\left\|\eta_i^n\right\|_{0,\Omega_i^n}^2+\left\|\left(\left.\frac{\partial\eta_i}{\partial t}\right|_{\hat{\bs{x}}}^h\right)^n\right\|_{0,\Omega_i^n}^2\right)\right)^\frac{1}{2}\right.\\
    &\qquad\quad+\left.\Delta t\left\|\left.\frac{\partial^2\bs{u}_i}{\partial t^2}\right|_{\hat{\bs{x}}}^h\right\|_{L^2(0,T;(L^2(\Omega_i^t))^d)}+\left(\Delta t\sum_{n=1}^N\left\|\delta_i^n\right\|_{0,\Omega_i^n}^2\right)^\frac{1}{2}\right]\\
    \le&\ Ch^k\sum_{i=1}^2\left[\|\bs{u}_i\|_{L^\infty(0,T;(H^{k+1}(\Omega_i^t))^d)}+\|\bs{u}_i\|_{L^2(0,T;(W^{2,\infty}(\Omega_i^t))^d)}+\left\|\left.\frac{\partial\bs{u}_i}{\partial t}\right|_{\hat{\bs{x}}}\right\|_{L^2(0,T;(H^{k+1}(\Omega_i^t))^d)}\right.\\
    &+\left.\|p_i\|_{L^\infty(0,T;H^k(\Omega_i^t))}+\|p_i\|_{L^2(0,T;W^{1,\infty}(\Omega_i^t))}+\left\|\left.\frac{\partial p_i}{\partial t}\right|_{\hat{\bs{x}}}\right\|_{L^2(0,T;H^k(\Omega_i^t))}\right]+C\Delta t\sum_{i=1}^2\left\|\left.\frac{\partial^2\bs{u}_i}{\partial t^2}\right|_{\hat{\bs{x}}}^h\right\|_{L^2(0,T;(L^2(\Omega_i^t))^d)}\\
    \le&\ C(h^k+\Delta t)\sum_{i=1}^2\left[\|\bs{u}_i\|_{L^\infty(0,T;(H^{k+1}(\Omega_i^t))^d)}+\|\bs{u}_i\|_{L^2(0,T;(W^{2,\infty}(\Omega_i^t))^d)}+\left\|\left.\frac{\partial\bs{u}_i}{\partial t}\right|_{\hat{\bs{x}}}\right\|_{L^2(0,T;(H^{k+1}(\Omega_i^t))^d)}\right.\\
    &\qquad\qquad\qquad\quad\ +\|p_i\|_{L^\infty(0,T;H^k(\Omega_i^t))}+\|p_i\|_{L^2(0,T;W^{1,\infty}(\Omega_i^t))}+\left\|\left.\frac{\partial p_i}{\partial t}\right|_{\hat{\bs{x}}}\right\|_{L^2(0,T;H^k(\Omega_i^t))}\\
    &\qquad\qquad\qquad\quad\ +\left.\left\|\left.\frac{\partial^2\bs{u}_i}{\partial t^2}\right|_{\hat{\bs{x}}}\right\|_{L^2(0,T;(L^2(\Omega_i^t))^d)}\right],
\end{align*}
which yields the error estimates \eqref{full-errest1}-\eqref{full-errest2}.
\end{proof}

\section{Numerical experiments}\label{sec:experiment}

\subsection{Prescribed interface motion}

We first consider a two-dimensional numerical example with a
prescribed interface motion. Let $\Omega=[0,1]\times[0,1]$ and
$T=1$. The interface $\Gamma^t$ is initially a square
$[0.4,0.6]\times[0.4,0.6]$ and translates with constant velocity
$(w_1,w_2)^\mathrm{T}=(0.1,0.05)^\mathrm{T}$. Define
\begin{equation*}
    F(x,y,t):=(x-(0.4+w_1t))(x-(0.6+w_1t))(y-(0.4+w_2t))(y-(0.6+w_2t)),
\end{equation*}
which satisfies $F(x,y,t)=0$ for any $(x,y)\in\Gamma^t$ and
$t\in[0,T]$. Moreover, we assume that $\mu=\mu_i(\bs{x})$ is
piecewise constant in $\Omega_i^t$ ($i=1,2$). Under this setting,
the exact solution $\bs{u}=(u,v)^\mathrm{T}$ and pressure $p$ for
problem \eqref{prob1}-\eqref{prob11} can be constructed as follows:
\begin{align*}
    u&=F(x,y,t)F_y(x,y,t)/\mu,\\
    v&=-F(x,y,t)F_x(x,y,t)/\mu,\\
    p&=\sin(\pi x)\sin(\pi y)(1-e^{-t}),
\end{align*}
where $F_x$ and $F_y$ denote the partial derivatives of $F$ with
respect to $x$ and $y$, respectively. This constructed solution
satisfies the regularity assumptions \eqref{reg1}-\eqref{reg5}, and
it also holds that $\nabla\cdot\bs{u}=0$ in $\Omega_i^t$ ($i=1,2$),
while the velocity is continuous across the interface $\Gamma^t$. In
this example, we take $\mu_1=1$, $\mu_2=1000$, with the terms
$\bs{f}_1,\bs{f}_2$ and $\bs{g}$ chosen appropriately so that the
exact solution satisfies the two-dimensional Stokes moving interface
problem \eqref{prob1}-\eqref{prob11}. Subject to the prescribed
interface motion, we solve the discrete ALE mapping $\bs{X}_{h,i}^t$
on $\Omega_i^0$ to generate the evolving meshes
$\mathcal{T}_{h,i}^t$ ($i=1,2$) for $t\in[0,T]$. Figure
\ref{fig:EXP1_domain} illustrates the initial and terminal domains
together with the corresponding meshes generated by the discrete ALE
mapping.

\begin{figure}[htbp]
    \centering
    \includegraphics[width=0.495\textwidth]{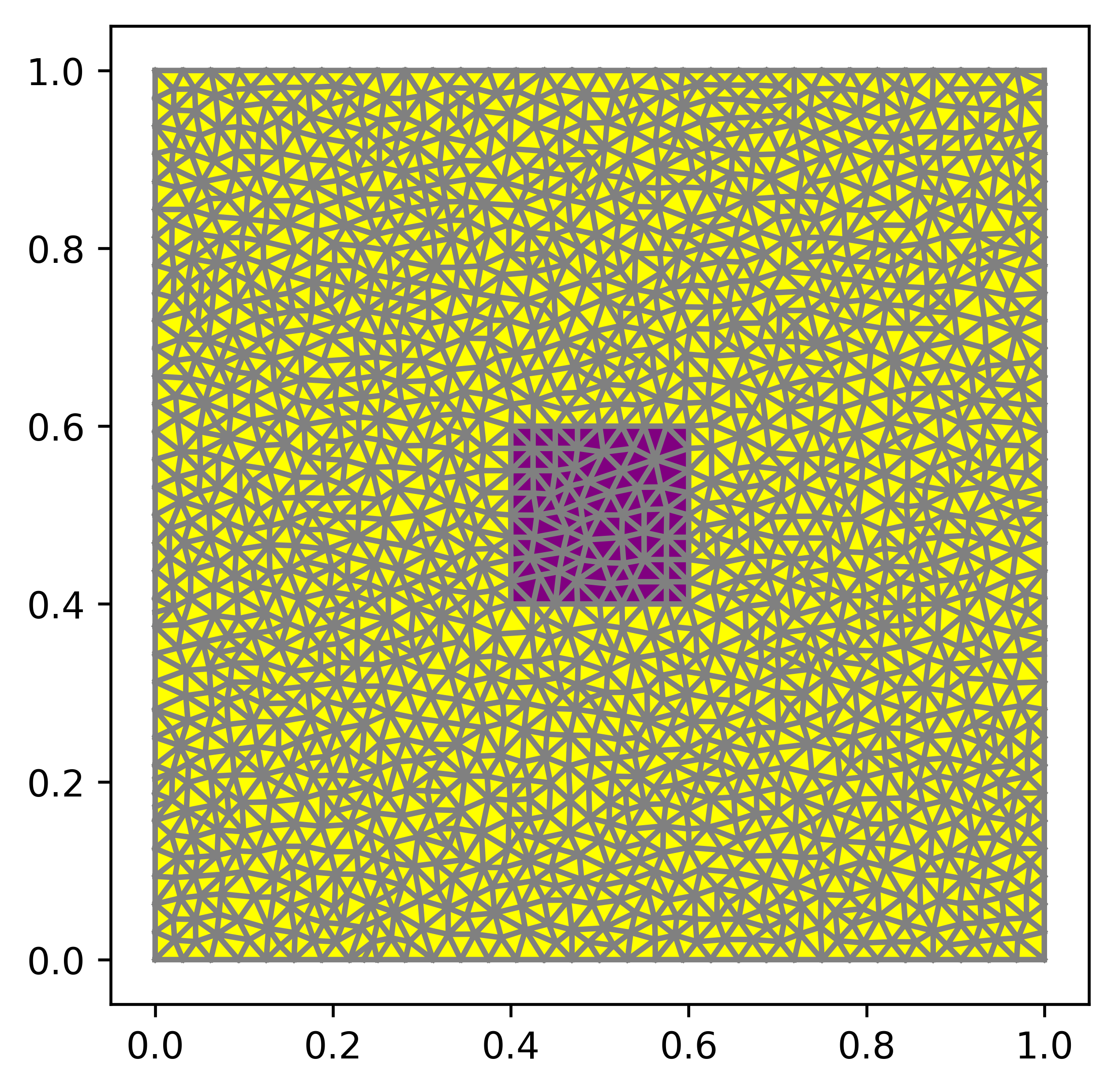}
    \includegraphics[width=0.495\textwidth]{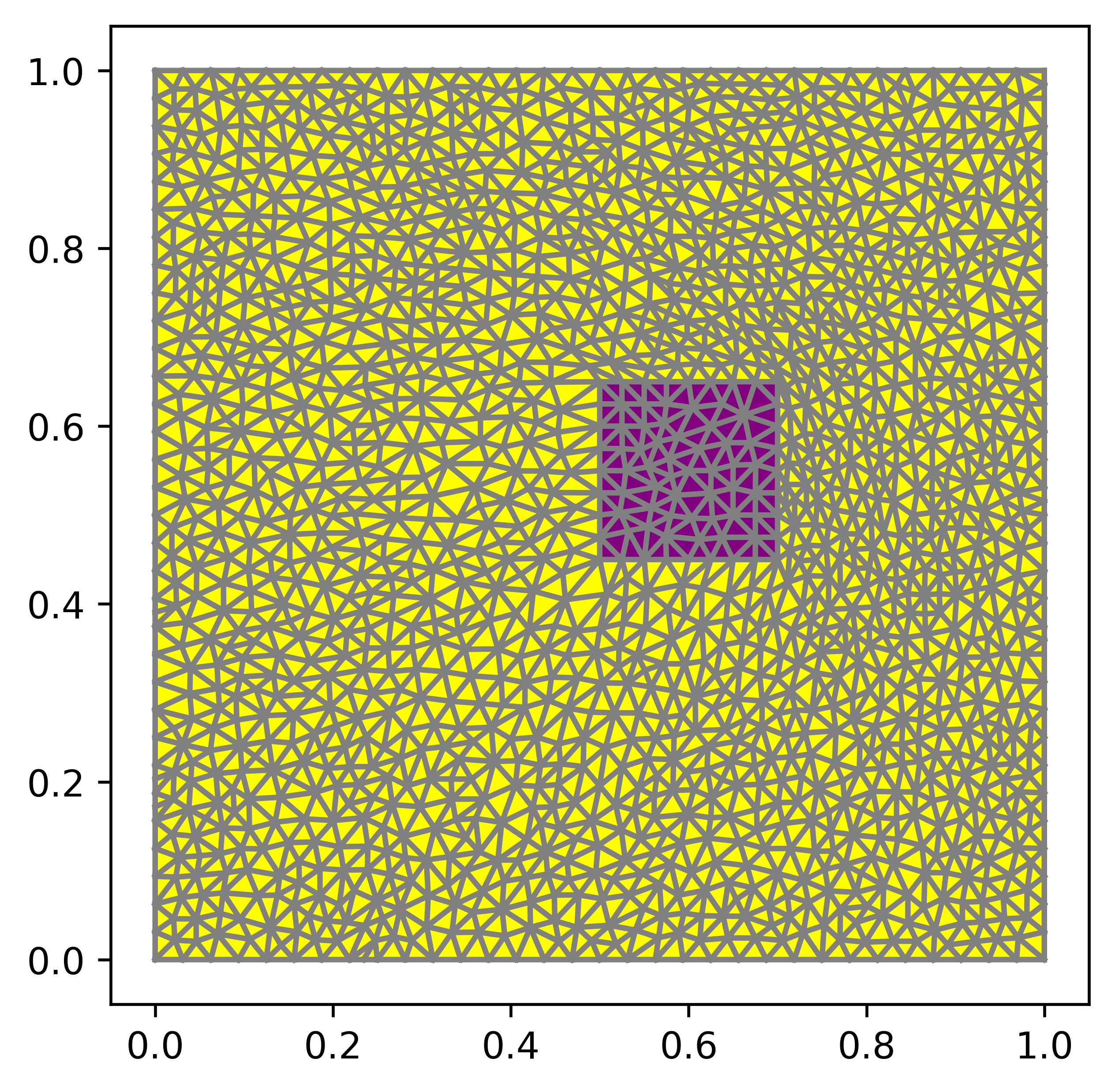}
    \caption{The initial (left) and terminal (right) domains and meshes with the mesh size $h=1/32$.}
    \label{fig:EXP1_domain}
\end{figure}

We employ the fully discrete ALE finite element scheme
\eqref{full1}-\eqref{full2} with the Taylor-Hood element of order
$k=2$, i.e., the $P^2$-$P^1$ element pair, as a stable
discretization for the Stokes problem to approximate
$((\bs{u}_1,\bs{u}_2),(p_1,p_2))$. To investigate the convergence
behavior, the mesh size is successively refined from $h=1/4$ to
$h=1/32$, while the time step size $\Delta t$ is chosen proportional
to $h^3$ in order to examine the optimal convergence rates in the
$L^2$-norm. The numerical convergence results are reported in Table
\ref{tab:EXP1_rate1}, and the convergence histories of all primary
variables are displayed in Figure \ref{fig:EXP1_rate1} via a log-log
plot. From these results, we observe that the velocity error in the
$H^1$-norm and the pressure error in the $L^2$-norm both exhibit
second-order convergence, while the velocity error in the $L^2$-norm
achieves third-order convergence, which leads to the first order
convergence rate in terms of the time step size $\Delta t$ due to
the choice of $\Delta t\propto h^3$. Hence, all observed convergence
rates are optimal with respect to both the adopted $P^2$-$P^1$
element pair and the employed time discretization scheme, thereby
validating the theoretical error estimates for the Stokes moving
interface problem.

\begin{table}[htbp]
    \centering
    \caption{Convergence performance of the $P^2$-$P^1$ element for the case of prescribed interface motion.}
    \label{tab:EXP1_rate1}
    \begin{tabular}{ccccccc}
        \toprule
        $h$ & $|u-u_h|_0$ & Order & $|u-u_h|_1$ & Order & $|p-p_h|_0$ & Order \\
        \midrule
        $1/4$ & $3.753322\times10^{-4}$ & $-$ & $9.065424\times10^{-3}$ & $-$ & $1.341254\times10^{-2}$ & $-$ \\
        $1/8$ & $3.859427\times10^{-5}$ & $3.28$ & $2.199977\times10^{-3}$ & $2.04$ & $3.148846\times10^{-3}$ & $2.09$ \\
        $1/16$ & $4.749342\times10^{-6}$ & $3.02$ & $5.194462\times10^{-4}$ & $2.08$ & $8.090942\times10^{-4}$ & $1.96$ \\
        $1/32$ & $6.448410\times10^{-7}$ & $2.88$ & $1.370416\times10^{-4}$ & $1.92$ & $1.977084\times10^{-4}$ & $2.03$ \\
        \bottomrule
    \end{tabular}
\end{table}

\begin{figure}[htbp]
    \centering
    \includegraphics[width=0.75\textwidth]{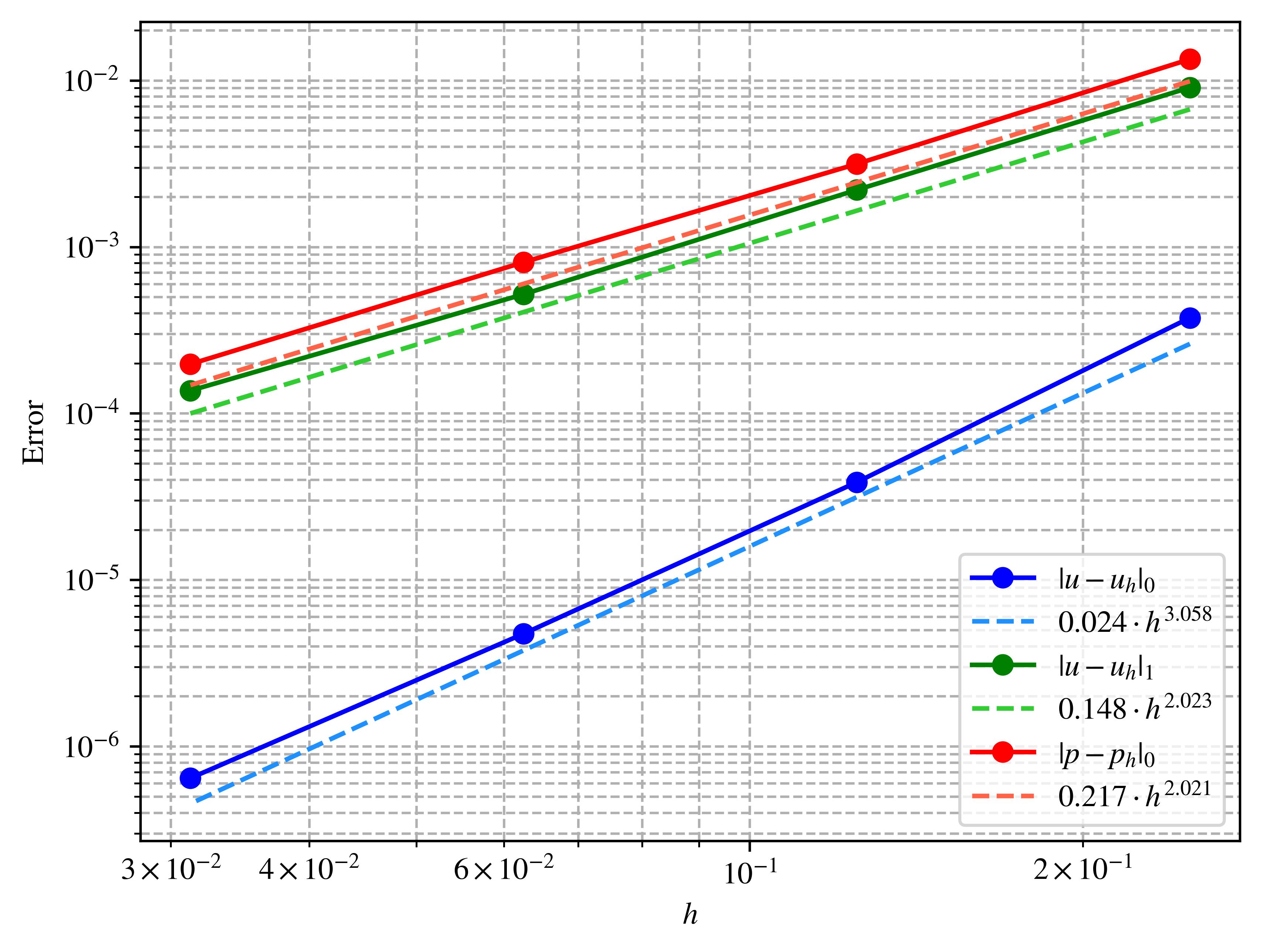}
    \caption{Convergence history of the $P^2$-$P^1$ element for the case of prescribed interface motion.}
    \label{fig:EXP1_rate1}
\end{figure}

Furthermore, it is worth noting that the theoretical analysis
developed in this work is also fully applicable to the MINI element.
This motivates us to further investigate the performance of the
fully discrete ALE finite element approximation
\eqref{full1}-\eqref{full2} using the MINI element for the same
Stokes moving interface problem. In this case, the mesh resolution
is again successively refined from $h=1/8$ to $h=1/64$, while the
time step size $\Delta t$ is chosen proportional to $h^2$. The
corresponding numerical convergence results are presented in Table
\ref{tab:EXP1_rate2}, and the convergence histories are shown in
Figure \ref{fig:EXP1_rate2}. It can be observed that both the
velocity error in the $H^1$-norm and the pressure error in the
$L^2$-norm converge with first-order accuracy, while the velocity
error in the $L^2$-norm exhibits second-order convergence, resulting
in the first order temporal convergence rate with respect to the
time step size $\Delta t$ due to the choice of $\Delta t\propto
h^2$. These convergence rates are optimal for both the MINI element
and the employed time discretization scheme, and are fully
consistent with our theoretical expectations.

\begin{table}[htbp]
    \centering
    \caption{Convergence performance of the MINI element for the case of prescribed interface motion.}
    \label{tab:EXP1_rate2}
    \begin{tabular}{ccccccc}
        \toprule
        $h$ & $|u-u_h|_0$ & Order & $|u-u_h|_1$ & Order & $|p-p_h|_0$ & Order \\
        \midrule
        $1/8$ & $5.163765\times10^{-4}$ & $-$ & $1.858271\times10^{-2}$ & $-$ & $1.671820\times10^{-2}$ & $-$ \\
        $1/16$ & $1.118028\times10^{-4}$ & $2.21$ & $8.654429\times10^{-3}$ & $1.10$ & $6.887790\times10^{-3}$ & $1.28$ \\
        $1/32$ & $3.097842\times10^{-5}$ & $1.85$ & $4.789103\times10^{-3}$ & $0.85$ & $3.513554\times10^{-3}$ & $0.97$ \\
        $1/64$ & $8.242146\times10^{-6}$ & $1.91$ & $2.233819\times10^{-3}$ & $1.10$ & $1.386609\times10^{-3}$ & $1.34$ \\
        \bottomrule
    \end{tabular}
\end{table}

\begin{figure}[htbp]
    \centering
    \includegraphics[width=0.75\textwidth]{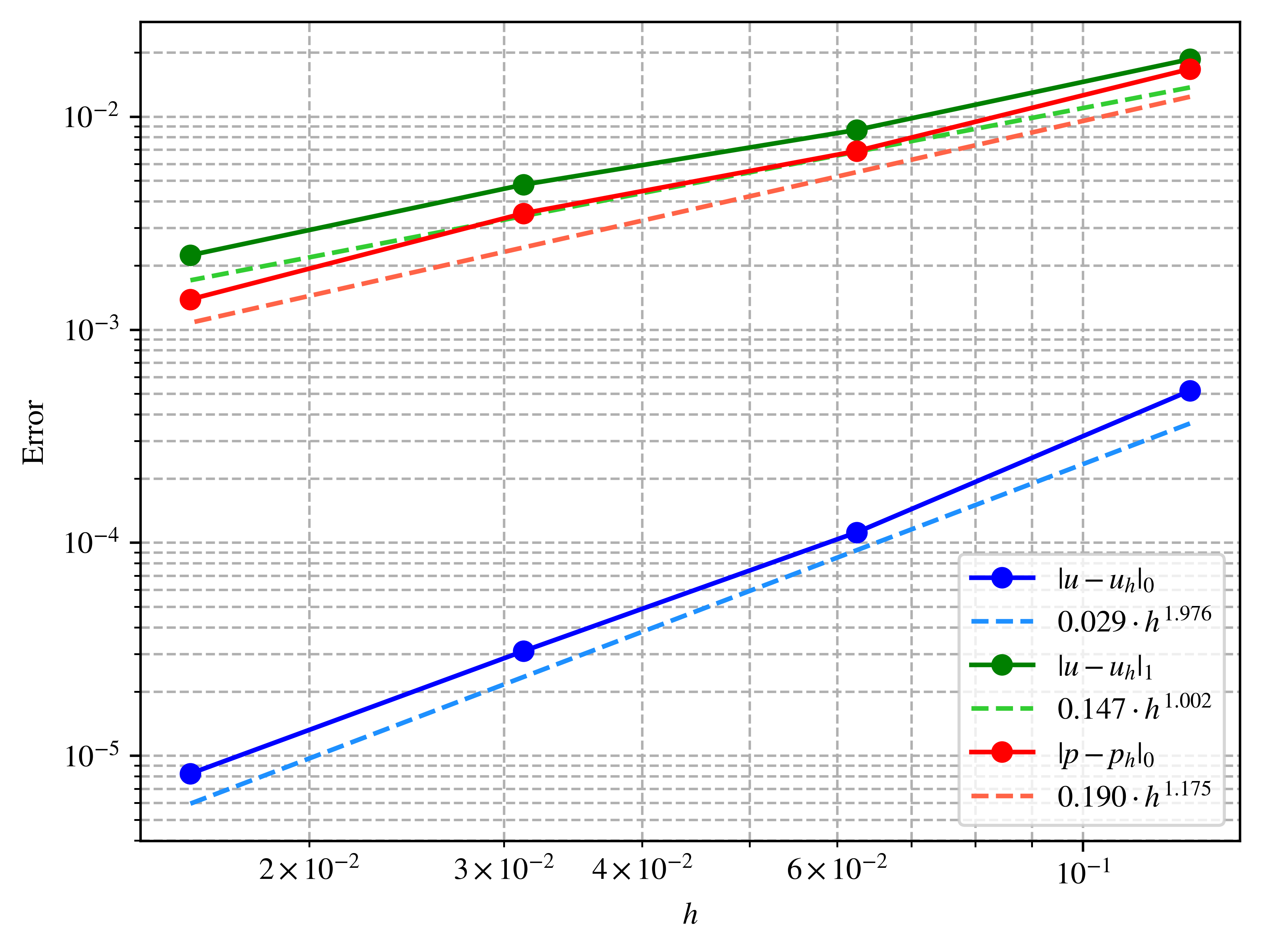}
    \caption{Convergence history of the MINI element for the case of prescribed interface motion.}
    \label{fig:EXP1_rate2}
\end{figure}

\subsection{Solution-driven interface motion}

We next consider a two-dimensional numerical example in which the
evolution of the domain, including the interface motion, is
determined by the velocity solution. Let $\Omega=[0,1]\times[0,1]$
and $T=1$. For an arbitrary smooth function $C(t)>0$, we construct
the following exact solution $\bs{u}=(u,v)^\mathrm{T}$ and pressure
$p$ for problem \eqref{prob1}-\eqref{prob11}:
\begin{align*}
    u&=-\frac{C'(t)}{C(t)}(x-(x_0+w_1t))+w_1,\\
    v&=\frac{C'(t)}{C(t)}(y-(y_0+w_2t))+w_2,\\
    p&=\cos(\pi x)\cos(\pi y)(1-e^{-t}).
\end{align*}
Consider the initial elliptical interface defined by
\begin{equation*}
    C(0)^2(x-x_0)^2+C(0)^{-2}(y-y_0)^2-R^2=0.
\end{equation*}
Under the evolution induced by the velocity field $\bs{u}$, the interface remains elliptical for all $t\in(0,T]$ and satisfies
\begin{equation*}
    F(x,y,t):=C(t)^2(x-(x_0+w_1t))^2+C(t)^{-2}(y-(y_0+w_2t))^2-R^2=0.
\end{equation*}
Indeed, this follows immediately from the identity
$\dfrac{DF}{Dt}=F_t(x,y,t)+uF_x(x,y,t)+vF_y(x,y,t)=0$, which holds
for the above definitions of $u,v$ and $F$, showing that the
interface is transported exactly by the fluid velocity. Here,
$(x_0,y_0)=(0.5,0.5)$ denotes the initial center of the ellipse,
$(w_1,w_2)^\mathrm{T}=(0.1,0.05)^\mathrm{T}$ represents the
translational velocity of the ellipse center, and $R=0.15$ is
related to the size of the initial ellipse. This constructed
solution satisfies the regularity assumptions
\eqref{reg1}-\eqref{reg5}, and it also holds that
$\nabla\cdot\bs{u}=0$ in $\Omega_i^t$ ($i=1,2$), while the velocity
is continuous across the interface $\Gamma^t$. In this example, we
take $\mu_1=1$, $\mu_2=1000$ and $C(t)=1+t/5$, with the terms
$\bs{f}_1,\bs{f}_2$ and $\bs{g}$ chosen appropriately so that the
exact solution satisfies the two-dimensional Stokes moving interface
problem \eqref{prob1}-\eqref{prob11}. Figure \ref{fig:EXP2_domain}
illustrates the initial and terminal domains together with the
corresponding meshes induced by the velocity solution.

\begin{figure}[htbp]
    \centering
    \includegraphics[width=0.54\textwidth]{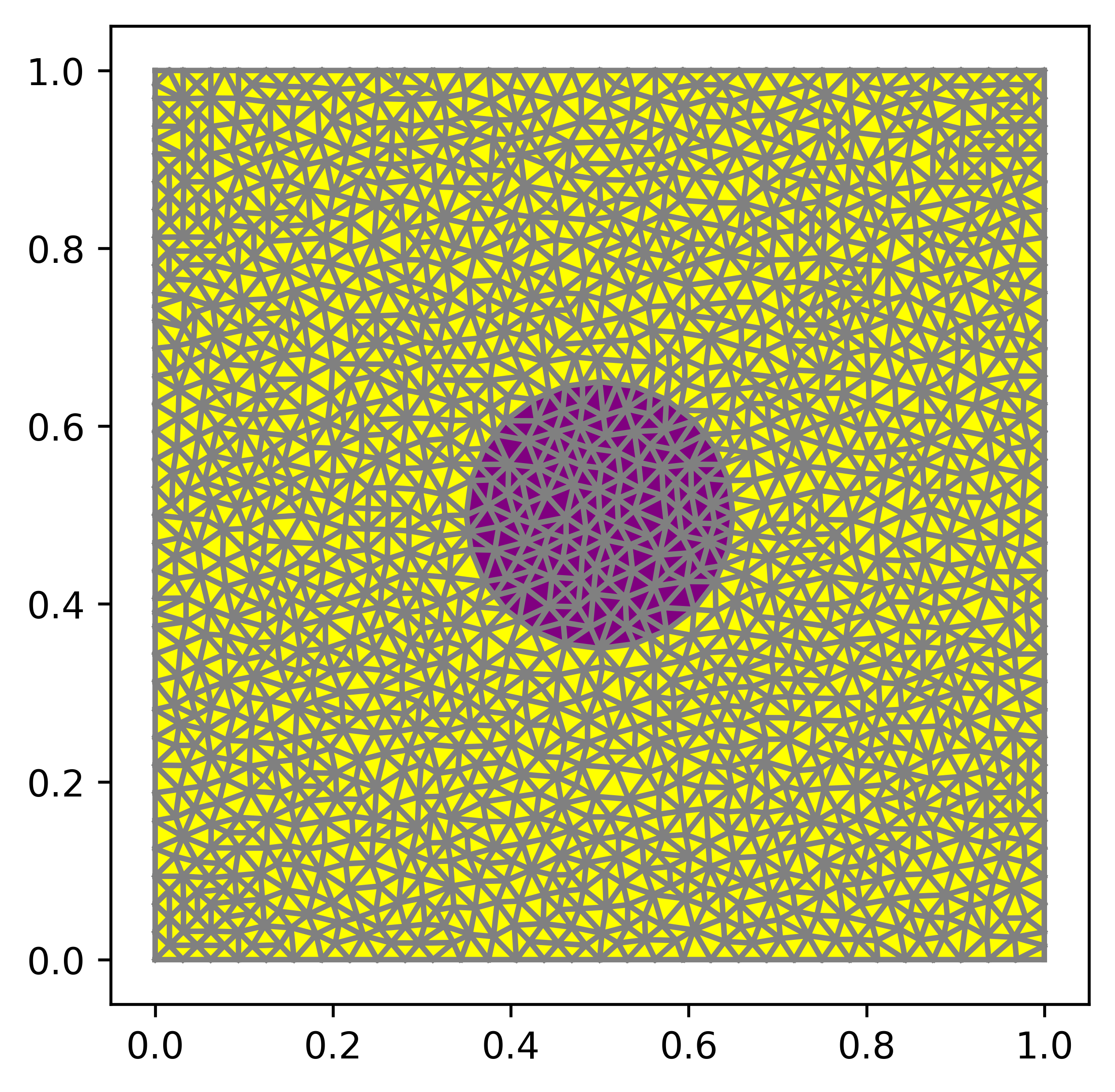}
    \includegraphics[width=0.45\textwidth]{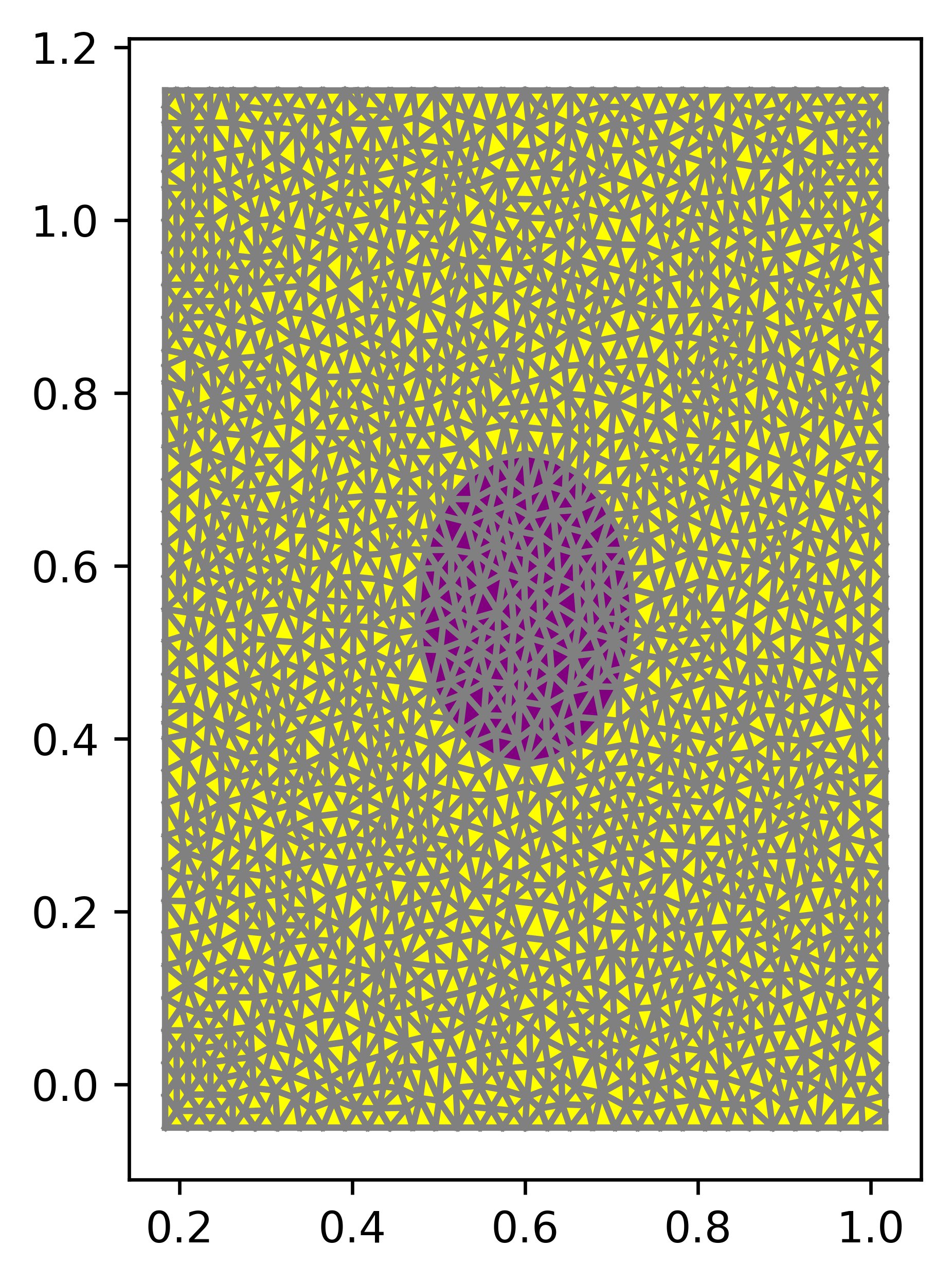}
    \caption{The initial (left) and terminal (right) domains and meshes with the mesh size $h=1/32$.}
    \label{fig:EXP2_domain}
\end{figure}

We first consider the case in which the interface evolution is driven by the exact solution. Although the mesh motion now depends on the exact velocity field, the theoretical analysis developed earlier remains applicable, provided that all of the mesh nodes evolve exactly according to the true velocity at each time $t\in[0,T]$. In this setting, the discrete mesh velocity coincides with the finite element interpolation of the exact solution, which implies that the estimates established in \eqref{X-conv1}-\eqref{w-conv2} are still valid, and therefore the harmonic extension technique is not required for computing the discrete ALE mapping $\bs{X}_{h,i}^t$ on $\Omega_i^0$. As in the previous example, we use the $P^2$-$P^1$ element to solve the Stokes moving interface problem for $((\bs{u}_1,\bs{u}_2),(p_1,p_2))$. The mesh size is successively refined from $h=1/4$ to $h=1/32$, while the time step size satisfies $\Delta t\propto h^3$. Numerical convergence results are reported in Table \ref{tab:EXP2_rate1}, and the convergence histories are shown in Figure \ref{fig:EXP2_rate1}. It can be observed that the velocity error in the $H^1$-norm and the pressure error in the $L^2$-norm both exhibit second-order convergence, while the velocity error in the $L^2$-norm achieves third-order convergence, whereas the convergence rate with respect to the time step size $\Delta t$ is of first order. Hence, all convergence rates are optimal for the adopted $P^2$-$P^1$ element pair and the time discretization scheme, which once again confirms the theoretical error estimates for the Stokes moving interface problem.

\begin{table}[htbp]
    \centering
    \caption{Convergence performance of the $P^2$-$P^1$ element for the case of interface motion driven by the exact solution.}
    \label{tab:EXP2_rate1}
    \begin{tabular}{ccccccc}
        \toprule
        $h$ & $|u-u_h|_0$ & Order & $|u-u_h|_1$ & Order & $|p-p_h|_0$ & Order \\
        \midrule
        $1/4$ & $1.932219\times10^{-4}$ & $-$ & $4.826289\times10^{-3}$ & $-$ & $1.122182\times10^{-2}$ & $-$ \\
        $1/8$ & $2.512761\times10^{-5}$ & $2.94$ & $1.238698\times10^{-3}$ & $1.96$ & $3.088901\times10^{-3}$ & $1.86$ \\
        $1/16$ & $3.481645\times10^{-6}$ & $2.85$ & $3.311780\times10^{-4}$ & $1.90$ & $7.547346\times10^{-4}$ & $2.03$ \\
        $1/32$ & $4.602643\times10^{-7}$ & $2.92$ & $8.607634\times10^{-5}$ & $1.94$ & $1.843080\times10^{-4}$ & $2.03$ \\
        \bottomrule
    \end{tabular}
\end{table}

\begin{figure}[htbp]
    \centering
    \includegraphics[width=0.75\textwidth]{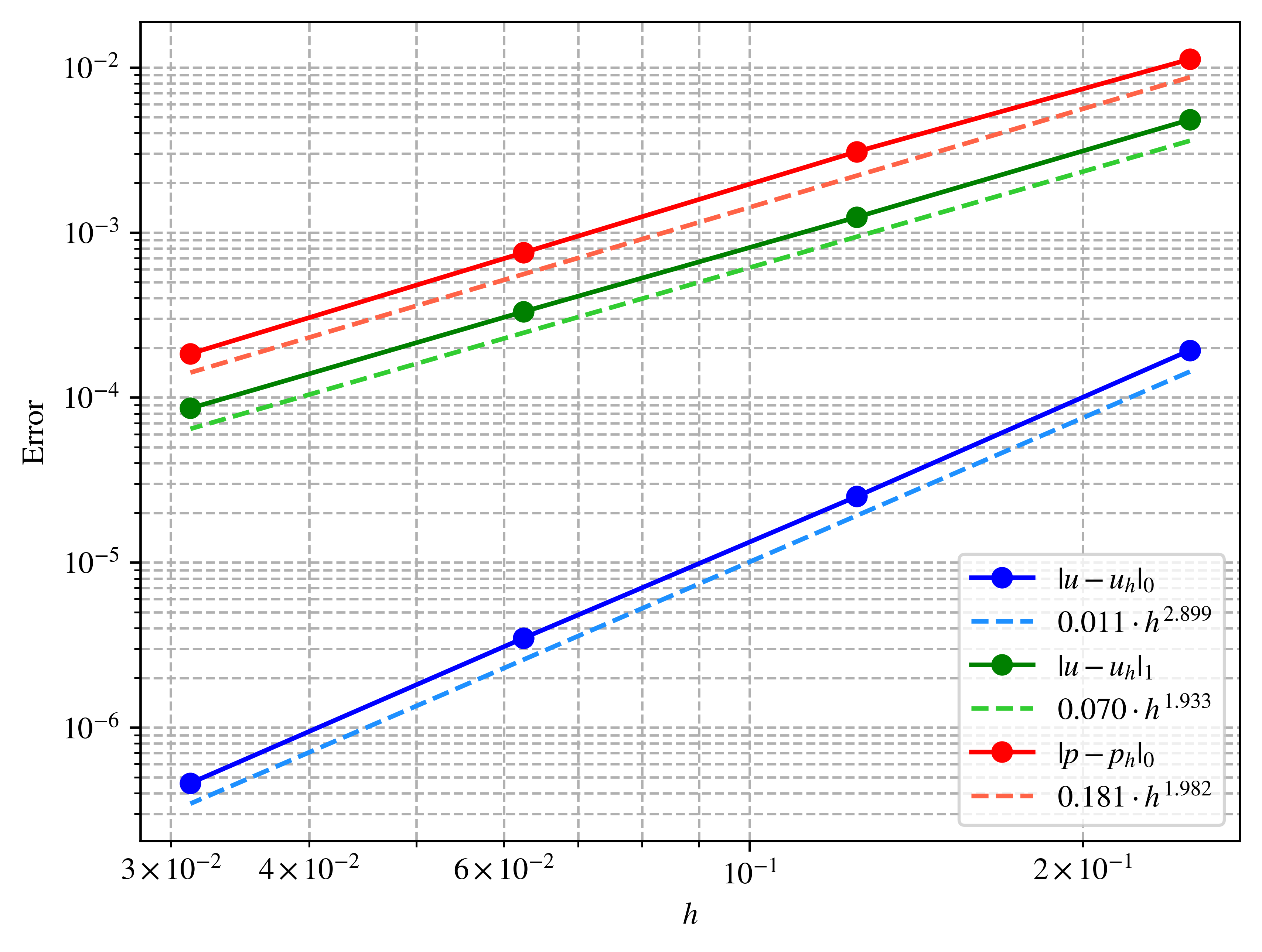}
    \caption{Convergence history of the $P^2$-$P^1$ element for the case of interface motion driven by the exact solution.}
    \label{fig:EXP2_rate1}
\end{figure}

Finally, we consider the more practical situation in which the
interface evolution is driven by the numerical velocity solution
rather than the exact solution. In realistic applications, the exact
solution is generally unavailable, and the interface motion must
therefore be determined solely from the numerical solution. In this
setting, we still solve the discrete ALE mapping $\bs{X}_{h,i}^t$ on
$\Omega_i^0$ to generate the moving meshes $\mathcal{T}_{h,i}^t$
($i=1,2$) for $t\in[0,T]$. However, unlike the previous cases, the
interface displacement is no longer prescribed and instead depends
explicitly on the numerical velocity solution. A rigorous
theoretical analysis for such interface problems, where the mesh
motion is coupled with the numerical solution itself, will be the
subject of future work. Nevertheless, we again employ the
$P^2$-$P^1$ element to solve the Stokes moving interface problem for
$((\bs{u}_1,\bs{u}_2),(p_1,p_2))$. The mesh size is refined
successively from $h=1/4$ to $h=1/32$, with $\Delta t\propto h^3$.
The numerical convergence results are reported in Table
\ref{tab:EXP2_rate2}, and the convergence histories are displayed in
Figure \ref{fig:EXP2_rate2}. It can be observed that the velocity
error in the $H^1$-norm and the pressure error in the $L^2$-norm
both exhibit second-order convergence, while the velocity error in
the $L^2$-norm achieves third-order convergence, thus the
convergence rate with respect to the time step size $\Delta t$
remains first order as well. These results further validate the
theoretical error estimates for the Stokes moving interface problem.

\begin{table}[htbp]
    \centering
    \caption{Convergence performance of the $P^2$-$P^1$ element for the case of interface motion driven by the numerical solution.}
    \label{tab:EXP2_rate2}
    \begin{tabular}{ccccccc}
        \toprule
        $h$ & $|u-u_h|_0$ & Order & $|u-u_h|_1$ & Order & $|p-p_h|_0$ & Order \\
        \midrule
        $1/4$ & $1.870029\times10^{-4}$ & $-$ & $4.715098\times10^{-3}$ & $-$ & $1.096016\times10^{-2}$ & $-$ \\
        $1/8$ & $2.503707\times10^{-5}$ & $2.90$ & $1.235665\times10^{-3}$ & $1.93$ & $3.080595\times10^{-3}$ & $1.83$ \\
        $1/16$ & $3.480160\times10^{-6}$ & $2.85$ & $3.310792\times10^{-4}$ & $1.90$ & $7.544950\times10^{-4}$ & $2.03$ \\
        $1/32$ & $4.602421\times10^{-7}$ & $2.92$ & $8.607351\times10^{-5}$ & $1.94$ & $1.843008\times10^{-4}$ & $2.03$ \\
        \bottomrule
    \end{tabular}
\end{table}

\begin{figure}[htbp]
    \centering
    \includegraphics[width=0.75\textwidth]{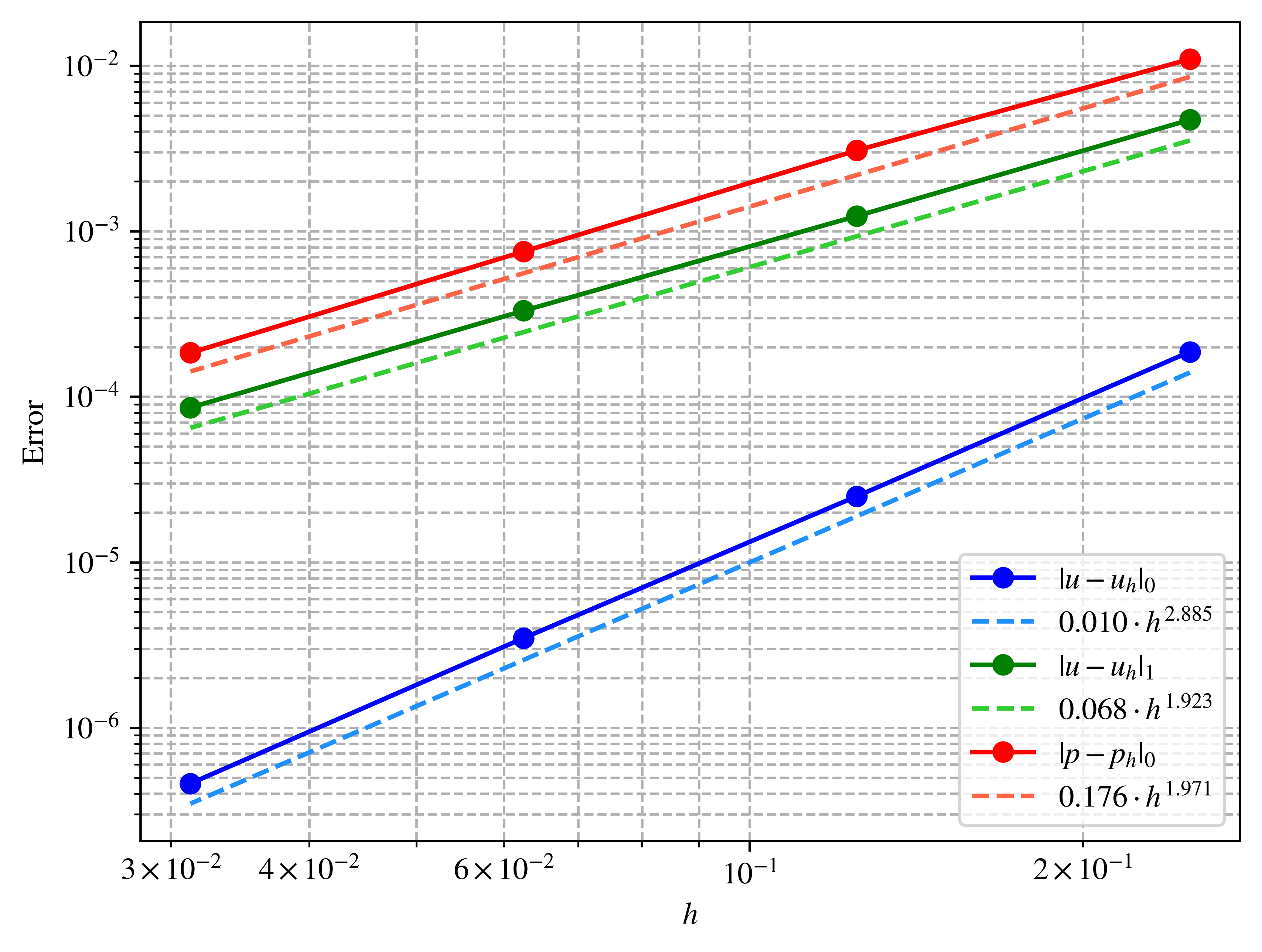}
    \caption{Convergence history of the $P^2$-$P^1$ element for the case of interface motion driven by the numerical solution.}
    \label{fig:EXP2_rate2}
\end{figure}

\section{Conclusion and future work}\label{sec:conclusion}

In this paper, we develop an arbitrary Lagrangian-Eulerian-finite
element method (ALE-FEM) for a class of Stokes moving interface
problems with jump coefficients, with the mesh motion suitably
prescribed. Both semi- and fully discrete schemes are constructed
based on an appropriate ALE mapping, and Taylor-Hood elements are
employed for the numerical approximation, while the theoretical
analysis can be extended without difficulty to MINI elements. A key
ingredient of the analysis is the introduction of a novel
$H^1$-projection associated with the moving interface problem, which
takes into account the effect of mesh motion induced by the ALE
formulation. Optimal approximation properties of this projection and
its ALE-time derivative are established in both $H^1$ and $L^2$
norms. Based on these results, optimal error estimates are derived
for both semi- and fully discrete schemes in $H^1$ and $L^2$ norms
as well. Compared with existing ALE-based analyses, the present work
provides a systematic treatment of projection errors induced by
moving meshes, including those of the $H^1$-projection and its
ALE-time derivative, and achieves optimal convergence rates in both
norms for Stokes interface problems with jump coefficients.
Numerical results confirm the theoretical findings. The analytical
framework developed here can be further extended to more general
two-phase flow problems as well as fluid-structure interaction
systems, which will be investigated in our future work.

\section*{Acknowledgments}

C. Wang and Y. Liang were partially supported by National Natural
Science Foundation of China grant (No. 12171366), Y. Liang was
partially supported by International Exchange Program for Graduate
Students, Tongji University, P. Sun was partially supported by a
grant from the Simons Foundation (MPS-TSM-00706640).

\section*{Conflicts of Interest}
The authors declare that they have no conflicts of interest to this
work.

\section*{Data Availability Statement}
No data were used for the research described in the article.

\bibliographystyle{unsrt}
\bibliography{reference}

\end{document}